\newtheorem{thm}{Theorem}[section]
\newtheorem{prop}[thm]{Proposition}
\newtheorem{lem}[thm]{Lemma}
\newtheorem{cor}[thm]{Corollary}
\theoremstyle{definition}
\newtheorem{defi}[thm]{Definition}
\newtheorem{rem}[thm]{Remark}
\newtheorem{ex}[thm]{Example}
\newtheorem{nota}[thm]{Notation}
\newtheorem{setting}{Setting}
\DeclareMathOperator{\supp}{supp}
\DeclareMathOperator{\Patch}{Patch}
\DeclareMathOperator{\LF}{LF}
\DeclareMathOperator{\Map}{Map}
\DeclareMathOperator{\UD}{UD}
\DeclareMathOperator{\Del}{Del}
\DeclareMathOperator{\calC}{Cl}
\DeclareMathOperator{\Homeo}{Homeo}
\DeclareMathOperator{\Cpt}{Cpt}
\DeclareMathOperator{\WDC}{WDC}
\newcommand{\calT}{\mathcal{T}}
\newcommand{\calA}{\mathcal{A}}
\newcommand{\calP}{\mathcal{P}}
\newcommand{\calQ}{\mathcal{Q}}
\newcommand{\calR}{\mathcal{R}}
\newcommand{\calU}{\mathcal{U}}
\newcommand{\e}{\varepsilon}
\newcommand{\Rd}{\mathbb{R}^d}
\newcommand{\sci}{\wedge}
\title[The common structure]{The common structure for objects in aperiodic order and the
theory of local matching topology}
\author{Yasushi Nagai}
\address{Montanuniversit\"at, Department Mathematik und Informationstechnologie,
Lehrstuhl f\"ur Mathematik und Statistik,
Franz Josef Strasse 18, A-8700 Leoben, Austria}
\email{yasushi.nagai@unileoben.ac.at}
\date{\today}							
\thanks{The author was supported by the project I3346 of the Japan Society for the Promotion of Science (JSPS) and the Austrian Science Fund (FWF)}
\begin{document}
\maketitle

\begin{abstract}
       In aperiodic order, non-periodic but ``ordered''
     objects such as tilings, Delone sets, functions and measures are investigated.
         In this article we depict the common structure of these objects by using
       the general framework of abstract pattern spaces. In particular, using the
      common structure
       we define local matching topology and uniform structure
       for objects such as tilings 
      in quite a general space and a symmetry group. We prove Hausdorff property of the
      topology and the completeness of the uniform structure under a mild assumption.
      We also prove finite local complexity implies the compactness of the continuous
 hull and often the converse holds.
\end{abstract}

\section{Introduction}
Ever since quasicrystals were discovered, mathematical objects such as tilings,
Delone (multi) sets, weighted Dirac combs and almost periodic functions have been
investigated, especially on their diffraction nature and the
connection with topology and the
theory of dynamical systems. In this context the continuous hulls and the corresponding
dynamical systems are important, where the former are
the closures of their orbits and
the latter are obtained by group actions on continuous hulls and
geometric analogues for symbolic dynamics.
The choice of topology is crucial here, since we want the continuous hulls to be compact.
The simplest topology (and uniform structure) is the local matching topology (and
local matching
uniform structure). If the ambient space where
 the above objects live is $\Rd$, it is known
that a condition called finite local complexity (FLC) assures that the continuous hulls
are compact with respect to the local matching topology. In the proof for this claim,
the fact that the local matching uniform structure is complete is tacitly used.
For discrete subsets (which include Delone sets), the completeness of
the local matching uniform structure is proved in \cite{MR1798991} and \cite{MR2137108}.
(These papers deal with the cases where the ambient space is not necessarily
Euclidean, but mathematically there
is no necessity to restrict ourselves to the Euclidean case.)
However there seems to be no analogous results for tilings, Delone multi sets,
weighted Dirac combs and functions, if the ambient space in which these objects live
is general. Since there are results on construction of non-periodic tilings in general
spaces
 (\cite{MR1145337},\cite{MR1452434},\cite{MR1658579}),
 it is worthwhile to prove such completeness. In this article we prove such
completeness in full generality.

The argument is based on a general framework to discuss those objects in a unified manner.
The essential structure of those objects is cutting-off operation and group action of
sliding objects.
For example, if $D$ and $C$ are subsets of a set $X$, we
can ``cut off'' $D$ by $C$ by considering $D\cap C$. If a group $\Gamma$ acts on $X$,
we can ``slide'' $D$ by considering $\gamma D$, for each $\gamma\in\Gamma$.
For each object, these structures have common properties and we can axiomatize them.
A set with a cutting-off operation and a group action that satisfy those axioms
are called \emph{abstract pattern spaces} and the elements of abstract pattern spaces
are called \emph{abstract patterns}. Naturally the above objects such as tilings and
Delone sets are abstract patterns.

The structure of cutting-off operation and group action with the axioms are common
structure of objects such as tilings and Delone sets.
The common structure is enough to define the local matching
topology and uniform structure on abstract pattern spaces.
Under a mild assumption, the topology is Hausdorff 
(Proposition \ref{prop_sufficint_condition_hausdorff}) and metrizable
(Corollary \ref{102557_9Aug18}).
We also prove that often on a subspace of abstract pattern space, the local matching
uniform structure is complete (Theorem \ref{thm_sigma_complete_general_ver}).
This means that FLC of an abstract pattern implies the compactness of the continuous hull
(Theorem \ref{thm_compact_continuous_hull}).

In Section 2 we give an introduction of the theory of abstract pattern spaces, although
we omit most of the proofs, which can be found in \cite{Nagai3rd}.
In Section 3 we define the local matching topology and uniform structure and investigate
the properties, such as completeness and metrizability.

\begin{setting}
     Here is the setting of this article. The symbol $X$ represents a proper metric space.
      The metric on $X$ is denoted by $\rho_X$.
     $\Gamma$ is a group and $\rho_{\Gamma}$ is a left-invariant proper metric
       on $\Gamma$.
       We assume $\Gamma$ acts on $X$ as isometries and the action is jointly continuous,
 that is, the map $\Gamma\times X\ni(\gamma,x)\mapsto\gamma x\in X$ is continuous,
 where the domain is endowed with the product topology.
      We take $x_0\in X$ and use it as a reference point throughout the article.
\end{setting}

\begin{nota}
      For $x\in X$ and $r>0$, we define the closed ball $B(x,r)$ via
      \begin{align*}
            B(x,r)=\{y\in X\mid \rho_X(x,y)\leqq r\}.
      \end{align*}
       Similarly for $\gamma\in\Gamma$ and $r>0$ we set
       \begin{align*}
	    B(\gamma,r)=\{\eta\in\Gamma\mid\rho_{\Gamma}(\gamma,\eta)\leqq r\}.
       \end{align*}

      The one-dimensional torus is denoted by $\mathbb{T}$:
       \begin{align*}
	     \mathbb{T}=\{z\in\mathbb{C}\mid |z|=1\}.
       \end{align*}
\end{nota}

\section{General theory of abstract pattern spaces}
\label{section_abstract pattern_space}

In this section we summarize the contents of \cite{Nagai3rd} to introduce abstract
pattern spaces discussed in Introduction.

Objects such as tilings (Example \ref{example_patch}) and Delone sets
 (Example \ref{example_UD}) admit the following
structures, which play important roles explicitly or implicitly.
\begin{enumerate}
 \item They admit cutting-off operation. For example, if $\calT$ is a tiling in $\Rd$ and
       $C\subset\Rd$, we can ``cut off'' $\calT$ by $C$ by considering
       \begin{align*}
	    \calT\sci C=\{T\in\calT\mid T\subset C\}.
       \end{align*}
       By this operation we forget the behavior of $\calT$ outside $C$.
\item Some of the objects ``include'' other objects. For patches this means the usual
      inclusion of two sets; for measures this means one measure is a restriction of
      another.
 \item They admit gluing operation. For example, suppose
       $\{\calP_i\mid i\in I\}$ is a family of patch
       such that if $i,j\in I, T\in\calP_i$ and $S\in\calP_j$, then either
       $S=T$ or $S\cap T=\emptyset$.
       Then we can ``glue'' $\calP_i$'s and obtain a patch $\bigcup_{i\in I}\calP_i$.
 \item There are ``zero elements'', which contains nothing.
       For example, empty set is a patch that contains no tiles; zero function also
       contains no information.
         Such a zero element is often unique for each category
       of objects. 
\end{enumerate}

We will axiomatize the cutting-off operation should satisfy and a set
with a cutting-off operation that obey the axiom is called
abstract pattern space.The other structures in the list are captured by the
cutting-off operation.

First in Subsection \ref{subsection_def_pat_sp} we give the axiom and define
abstract pattern spaces. In Subsection \ref{subsection_order_pat_sp} we deal with
the ``inclusion'' in the list. In Subsection \ref{subsection_glueable_pat_sp} we discuss
``gluing'' operation in the list, by describing abstract pattern spaces in which we
``often'' glue abstract patterns. Finally in Subsection \ref{subsection_zero_elements}
we discuss ``zero elements'' in the list and give a sufficient conditions for it to be
unique. In Subsection \ref{subsection_gammma-abstract pattern_space} we deal with
abstract pattern spaces with $\Gamma$-action, which are called $\Gamma$-abstract
pattern spaces.

\subsection{Definition and examples of abstract pattern space}
\label{subsection_def_pat_sp}

\begin{defi}
       The set of all closed subsets of $X$ is denoted by $\calC(X)$.
\end{defi}

\begin{defi}
      A non-empty set $\Pi$ equipped with a map
      \begin{align}
           \Pi\times\calC(X)\ni (\mathcal{P},C)\mapsto \calP\sci C\in\Pi \label{scissors_operation}
      \end{align}
      such that
      \begin{enumerate}
       \item $(\calP\sci C_1)\sci C_2=\calP\sci (C_1\cap C_2)$ for any $\calP\in\Pi$ and 
                any $C_1,C_2\in\calC(X)$, and
       \item for any $\calP\in\Pi$ there exists $C_{\calP}\in\calC(X)$ such that
             \begin{align*}
	         \calP\sci C=\calP\iff C\supset C_{\calP},
	     \end{align*}
               for any $C\in\calC(X)$,
      \end{enumerate}
      is called an \emph{abstract pattern space} over $X$.
      The map (\ref{scissors_operation}) is called the \emph{cutting-off operation} of
      the abstract pattern space $\Pi$.
      The closed set $C_{\calP}$ that appears in 2.\  is unique.
      It is called the \emph{support} of $\calP$ and is represented by $\supp\calP$.
      Elements in $\Pi$ are called \emph{abstract patterns} in $\Pi$.
\end{defi}

The following lemma describes a relation between the support and the
cutting-off operation.
\begin{lem}\label{lemma_support_calP_sci_C}
    Let $\Pi$ be an abstract pattern space over $X$.
    For any $\calP\in\Pi$ and $C\in\calC (X)$, we have $\supp(\calP\sci C)\subset(\supp\calP)\cap C$.
\end{lem}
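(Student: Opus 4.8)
The plan is to exploit the defining property of the support (axiom~2) in its ``upper bound'' direction: for any abstract pattern $\calQ\in\Pi$, a closed set $D$ satisfies $D\supset\supp\calQ$ as soon as $\calQ\sci D=\calQ$. I would apply this to $\calQ:=\calP\sci C$ together with the candidate set $D:=(\supp\calP)\cap C$. For this to make sense one first notes that $D\in\calC(X)$: both $\supp\calP$ and $C$ lie in $\calC(X)$, and $\calC(X)$ is closed under finite intersections, so $D$ is a legitimate argument of the cutting-off operation.

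Next I would record the elementary remark that $\calP\sci\supp\calP=\calP$ for every $\calP\in\Pi$, which is just axiom~2 applied to $\calP$ with the closed set $\supp\calP$, since $\supp\calP\supset\supp\calP$. The core of the argument is then a short computation using only axiom~1. With $\calQ=\calP\sci C$ and $D=(\supp\calP)\cap C$ one has
\begin{align*}
      \calQ\sci D
      &=(\calP\sci C)\sci\bigl((\supp\calP)\cap C\bigr)
       =\calP\sci\bigl(C\cap(\supp\calP)\cap C\bigr)
       =\calP\sci\bigl((\supp\calP)\cap C\bigr) \\
      &=(\calP\sci\supp\calP)\sci C
       =\calP\sci C
       =\calQ.
\end{align*}
Since $\calQ\sci D=\calQ$, axiom~2 applied to $\calQ$ gives $D\supset\supp\calQ$, that is, $\supp(\calP\sci C)\subset(\supp\calP)\cap C$, as claimed.

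I do not expect a genuine obstacle here. The only points that need care are purely formal: checking that $(\supp\calP)\cap C$ is closed so that the cutting-off operation may be applied to it, and keeping the intersection bookkeeping in the successive uses of axiom~1 straight (in particular the harmless simplification $C\cap(\supp\calP)\cap C=(\supp\calP)\cap C$).
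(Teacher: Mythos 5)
Your argument is correct and is essentially the paper's own proof: the paper computes $(\calP\sci C)\sci((\supp\calP)\cap C)=(\calP\sci\supp\calP)\sci C=\calP\sci C$ and concludes via axiom~2, exactly as you do. Your version merely spells out the intermediate intersection bookkeeping and the (harmless) check that $(\supp\calP)\cap C$ is closed.
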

\begin{proof}
\begin{align*}
   (\calP\sci C)\sci ((\supp\calP)\cap C)=(\calP\sci\supp\calP)\sci C=\calP\sci C.
\end{align*}
\end{proof}

We now give several examples of abstract pattern spaces.

\begin{ex}[The space of patches in $X$]\label{example_patch}
      An open, nonempty and bounded subset of $X$ is called a \emph{tile} (in $X$).
     A set $\calP$ of tiles such that if $S,T\in\calP$, then either $S=T$ or $S\cap T=\emptyset$
     is called a \emph{patch} (in $X$).
     The set of all patches in $X$ is denoted by $\Patch(X)$.
     For $\calP\in\Patch(X)$ and $C\in\calC(X)$, set
     \begin{align*}
           \calP\sci C=\{T\in\calP\mid T\subset C\}.
     \end{align*}
     With this cutting-off
     operation $\Patch(X)$ becomes an abstract pattern space over $X$.
     For $\calP\in\Patch(X)$, its support is
     \begin{align*}
           \supp\calP=\overline{\bigcup_{T\in\calP}T}.
     \end{align*}  
       Patches $\calP$ with $\supp\calP=X$ are called \emph{tilings}.
\end{ex}

\begin{rem}
     Usually tiles are defined to be (1) a compact set that is the closure of its
 interior \cite{BH},
     or in Euclidean case,    (2) a polygonal subset of $\Rd$ \cite{Wh} or
 (3) a homeomorphic image of closed unit
 ball (for example, \cite{AP}).
     The advantage of our definition is that we can give punctures to tiles
    and we do not need to consider labels (Example \ref{ex_L-labeled_tiling}).

 The usual labeled tilings (Example \ref{ex_L-labeled_tiling})
      are often MLD with tilings with open tiles (Example \ref{example_patch}).
     It is the cutting-off operation that is essential and the definition of tiles
    is not essential.
\end{rem}

\begin{ex}[The space of labeled patches, \cite{MR1976605}, \cite{MR2851885}]\label{ex_L-labeled_tiling}
      Let $L$ be a set. An \emph{$L$-labeled tile}
        is a pair $(T,l)$ of a compact subset $T$
       of $X$ and $l\in L$, such that $T=\overline{T^{\circ}}$ (the closure of the
       interior).
       An \emph{$L$-labeled patch} is a collection $\calP$ of $L$-labeled tiles such that
        if $(T,l), (S,k)\in\calP$, then either $T^{\circ}\cap S^{\circ}=\emptyset$, or
	$S=T$ and $l=k$. For an $L$-labeled patch $\calP$, define the support of $\calP$
       via
       \begin{align*}
	    \supp \calP=\overline{\bigcup_{(T,l)\in\calP}T}.
       \end{align*}
       An $L$-labeled patch $\calT$ with $\supp\calT=X$ is called an \emph{$L$-labeled tiling}.
      Sometimes we suppress $L$ and call such tilings labeled tilings.

      For an $L$-labeled patch $\calP$ and $C\in\calC(X)$, define a cutting-off
       operation via
       \begin{align*}
	      \calP\sci C=\{(T,l)\in\calP\mid T\subset C\}.
       \end{align*}
       The space $\Patch_L(X)$ of all $L$-labeled patches is a pattern space over $X$
        with this cutting-off operation. 
\end{ex}

\begin{ex}[The space of all locally finite subsets of $X$]\label{example_LF(X)}
     Let $\LF(X)$ be the set of all locally finite subsets of $X$; that is,
     \begin{align*}
           \LF(X)=\{D\subset X\mid \text{for all $x\in X$ and $r>0$, $D\cap B(x,r)$ is finite}\}.
     \end{align*}
     With the
     usual intersection
     $\LF(X)\times\calC(X)\ni(D,C)\mapsto D\cap C\in\LF(X)$ of two subsets
     of $X$, $\LF(X)$ is an abstract pattern space over $X$.
     For any $D\in\LF(X)$, its support is $D$ itself.     
\end{ex}

\begin{ex}[The space of all uniformly discrete subsets]\label{example_UD}
      We say, for $r>0$,
     a subset $D$ of $X$ is \emph{$r$-uniformly discrete} if
     $\rho_X(x,y)>r$ for any $x,y\in D$ with $x\neq y$.
     The set $\UD_r(X)$ of all $r$-uniformly discrete subsets of $X$ is an abstract pattern space over $X$
     by the usual intersection as a cutting-off operation.
     If $D$ is $r$-uniformly discrete for some $r>0$, we say $D$ is \emph{uniformly discrete}.
     The set $\UD(X)=\bigcup_{r>0}\UD_r(X)$ of all uniformly discrete subsets of $X$ is
     also an abstract pattern space over $X$.

       Take a positive real number $R$.
      A subset $D$ of $X$ is \emph{$R$-relatively dense} if whenever we take $x\in X$, the
      intersection $D\cap B(x,R)$ is non-empty. A subset of $X$ is \emph{relatively dense}
       if it is $R$-relatively dense for some $R>0$. The uniformly discrete and
 relatively dense subsets of $X$ are called \emph{Delone sets}.
\end{ex}

\begin{ex}\label{example_2^X_calC(X)}
     With the usual intersection of two subsets of $X$ as a cutting-off operation,
     the set $2^X$ of all subsets of $X$ and $\calC(X)$ are abstract pattern spaces over $X$. For example, the union of all Ammann bars for a Penrose tilings is an abstract
 pattern.
\end{ex}

\begin{ex}[The space of maps]\label{example_map}
      Let $Y$ be a nonempty set.
      Take one element $y_0\in Y$ and fix it.
      The abstract pattern space $\Map(X,Y,y_0)$ is defined as follows:
      as a set the space is equal to $\Map(X,Y)$ of all mappings from $X$ to $Y$;
      for $f\in\Map(X,Y,y_0)$ and $C\in\calC(X)$, the cutting-off operation is defined by
      \begin{align*}
           (f\sci C)(x)=
           \begin{cases}
	         f(x)&\text{if $x\in C$}\\
                 y_0&\text{if $x\notin C$}.
	   \end{cases}
      \end{align*}
      With this operation $\Map(X,Y,y_0)$ is an abstract pattern space over $X$
      and for $f\in\Map(X,Y,y_0)$ its support is 
      $\supp f=\overline{\{x\in X\mid f(x)\neq y_0\}}$.
\end{ex}

\begin{ex}[The space of measures]\label{ex_space_of_measures}
     Let $C_c(X)$ be the space of all continuous and complex-valued functions on $X$
     which have compact supports.
     Its dual space $C_c(X)'$ with respect to the inductive limit topology
     consists of Radon charges, that is,
     the maps $\Phi
     \colon C_c(X)\rightarrow\mathbb{C}$ such that there is a unique positive
     Borel measure $m$ and a Borel measurable map $u\colon X\rightarrow\mathbb{T}$
     such that
     \begin{align*}
      \Phi(\varphi)=\int_{X}\varphi udm
     \end{align*}
     for all $\varphi\in C_c(X)$.
     For such $\Phi$ and $C\in\calC (X)$ set
     \begin{align*}
         (\Phi\sci C)(\varphi)=\int_C \varphi udm
     \end{align*}     
     for each $\varphi\in C_c(X)$.
     Then the new functional $\Phi\sci C$ is a Radon charge.
     With this operation $C_c(X)'\times\calC (X)\ni(\Phi,C)\mapsto
      \Phi\sci C\in C_c(X)'$,
     the space $C_c(X)'$ becomes an abstract pattern space over $X$.
\end{ex}

Next we investigate abstract pattern subspaces.
The relation between an abstract pattern space and its abstract
pattern subspaces is similar to the one
between  a set with a group action and its invariant subsets.

\begin{defi}\label{def_subspace}
     Let $\Pi$ be an abstract pattern space over $X$.
     Suppose a non-empty subset $\Pi'$ of $\Pi$ satisfies the condition
     \begin{align*}
          \text{$\calP\in\Pi'$ and $C\in\calC(X)\Rightarrow\calP\sci C\in\Pi'$}.
     \end{align*}
     Then $\Pi'$ is called an \emph{abstract pattern subspace} of $\calP$.
\end{defi}

\begin{rem}
      If $\Pi'$ is an abstract
      pattern subspace of an abstract pattern space $\Pi$, then $\Pi'$ is a 
      abstract pattern space by restricting the cutting-off operation.
\end{rem}

\begin{ex}
      $\calC(X)$ is an abstract pattern subspace of
      $2^X$.
       $\LF(X)$ is an abstract
 pattern subspace of $\calC(X)$ and
           $\UD_r(X)$ is an abstract pattern subspace of $\UD(X)$ 
      for each $r>0$. 
       Since we assume the metrics we consider are proper,
      $\UD(X)$ is an abstract pattern subspace of
      $\LF(X)$.
\end{ex}

Next we deal with a way to construct new abstract pattern space from old ones, namely,
taking product.

\begin{lem}\label{lemmma_product_abstract pattern_space}
      Let $\Lambda$ be an index set and $\Pi_{\lambda},\lambda\in\Lambda$, is a 
     family of abstract pattern spaces over $X$.
     The direct product $\prod_{\lambda}\Pi_{\lambda}$ becomes an abstract pattern space over $X$ by 
     a cutting-off operation
     \begin{align*}
      (\calP_{\lambda})_{\lambda\in\Lambda}\sci 
        C=(\calP_{\lambda}\sci C)_{\lambda\in\Lambda}.
     \end{align*}
     for $(\calP_{\lambda})_{\lambda}\in\prod_{\lambda}\Pi_{\lambda}$ and $C\in\calC(X)$.
     The support is given by
     $\supp(\calP_{\lambda})_{\lambda}=\overline{\bigcup_{\lambda}\supp\calP_{\lambda}}$.
\end{lem}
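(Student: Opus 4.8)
The plan is to verify the two axioms of an abstract pattern space for the product $\prod_\lambda \Pi_\lambda$ equipped with the proposed cutting-off operation, reducing each to the corresponding statement in the factors $\Pi_\lambda$, and then to read off the support by a short closure argument.

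First I would observe that the map is well-defined and the product is nonempty: a direct product of nonempty sets is nonempty, and for $(\calP_\lambda)_\lambda\in\prod_\lambda\Pi_\lambda$ and $C\in\calC(X)$ each coordinate $\calP_\lambda\sci C$ lies in $\Pi_\lambda$, so $(\calP_\lambda\sci C)_\lambda\in\prod_\lambda\Pi_\lambda$. Axiom 1 is then immediate coordinatewise: using the first axiom in each $\Pi_\lambda$,
\[
\bigl((\calP_\lambda)_\lambda\sci C_1\bigr)\sci C_2=\bigl((\calP_\lambda\sci C_1)\sci C_2\bigr)_\lambda=\bigl(\calP_\lambda\sci(C_1\cap C_2)\bigr)_\lambda=(\calP_\lambda)_\lambda\sci(C_1\cap C_2).
\]

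For Axiom 2, I would propose $C_{(\calP_\lambda)_\lambda}:=\overline{\bigcup_\lambda\supp\calP_\lambda}$, which is closed by construction. For arbitrary $C\in\calC(X)$, note that $(\calP_\lambda)_\lambda\sci C=(\calP_\lambda)_\lambda$ holds in the product if and only if $\calP_\lambda\sci C=\calP_\lambda$ for every $\lambda$; by Axiom 2 applied in each $\Pi_\lambda$ this is equivalent to $C\supset\supp\calP_\lambda$ for every $\lambda$, i.e.\ to $C\supset\bigcup_\lambda\supp\calP_\lambda$. Since $C$ is closed, the last containment is equivalent to $C\supset\overline{\bigcup_\lambda\supp\calP_\lambda}=C_{(\calP_\lambda)_\lambda}$. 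This establishes the axiom, and by the uniqueness of the support asserted in the definition it simultaneously identifies $\supp(\calP_\lambda)_\lambda=\overline{\bigcup_\lambda\supp\calP_\lambda}$.

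The only point that needs a little care — and hence the "main obstacle", though it is very mild — is the handling of the closure in Axiom 2: the union $\bigcup_\lambda\supp\calP_\lambda$ is in general not closed, so it cannot itself serve as the support, but the elementary fact that a closed set contains a subset precisely when it contains that subset's closure repairs this and makes the characterization via $C\supset C_{(\calP_\lambda)_\lambda}$ come out correctly. Everything else is a routine coordinatewise transcription of the axioms holding in the factors $\Pi_\lambda$.
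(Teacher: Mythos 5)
Your proof is correct; the paper itself omits the proof of this lemma (deferring to \cite{Nagai3rd}), and your coordinatewise verification together with the closure observation for Axiom 2 is exactly the routine argument intended. No gaps.
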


\begin{defi}\label{def_product_abstract pattern_sp}
    Under the same condition as in Lemma \ref{lemmma_product_abstract pattern_space}, we call
    $\prod\Pi_{\lambda}$ the product abstract pattern space of $(\Pi_{\lambda})_{\lambda}$.
\end{defi}

By taking product, we can construct the space of uniformly discrete multi sets,
as follows.

\begin{ex}[uniformly discrete multi set, \cite{MR1976605}]\label{ex_multi_set}
     Let $I$ be a set and $r>0$. Consider a space $\UD^I_r(X)$, defined via
     \begin{align*}
          \UD^I_r(X)=\{(D_i)_{i\in I}\in\prod_{i\in I}\UD_r(X)\mid \bigcup_i D_i\in\UD_r(X)\}.
     \end{align*}
     Elements of $\UD^I_r(X)$ are called \emph{$r$-uniformly discrete multi sets}.
      Elements of
       \begin{align*}
	     \UD^I(X)=\bigcup_{r>0}\UD^I_r(X)
       \end{align*}
       are called \emph{uniformly discrete multi sets}.
      A uniformly discrete multi set $(D_i)_i\in\UD^I(X)$ is called a
      \emph{Delone multi set}
     if each $D_i$ is a Delone set and the union $\bigcup_iD_i$ is a Delone set.

            A cutting-off operation on $\UD^I(X)$ is defined by regarding it as a subspace
     of the product space $\prod_{i\in I}\UD(X)$, that is, via a equation
      \begin{align*}
             (D_i)_{i\in I}\sci C=(D_i\cap C)_{i\in I}.
      \end{align*}
       By this operation $\UD^I(X)$ and $\UD_r^I(X)$ are abstract pattern spaces over $X$.
\end{ex}





\subsection{An order on abstract pattern spaces}
\label{subsection_order_pat_sp}
Here we discuss an order on abstract pattern spaces. All the proofs are found in
\cite{Nagai3rd}.

\begin{defi}\label{def_order_abstract pattern_space}
     Let $\Pi$ be an abstract pattern space over $X$. We define a relation
      $\geqq$ on $\Pi$ as follows:
     for each $\calP,\calQ\in\Pi$, we set $\calP\geqq\calQ$ if 
     \begin{align*}
          \calP\sci\supp\calQ=\calQ.
     \end{align*}
\end{defi}

\begin{lem}
     \begin{enumerate}
      \item If $\calP\geqq\calQ$, then $\supp\calP\supset\supp\calQ$.
      \item The relation $\geqq$
             is an order on $\Pi$.
     \end{enumerate}
\end{lem}

\begin{lem}\label{lem_order_abstract pattern_space}
     \begin{enumerate}
      \item If $\calP\in\Pi$ and $C\in\calC(X)$, then $\calP\geqq\calP\sci C$.
      \item If $\calP,\calQ\in\Pi$, $C\in\calC(X)$ and $\calP\geqq\calQ$, then
            $\calP\sci C\geqq\calQ\sci C$.
     \end{enumerate}      
\end{lem}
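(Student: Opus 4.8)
For part (1), I would unwind the definition of $\geqq$: I need to show $\calP\sci\supp(\calP\sci C)=\calP\sci C$. By Lemma \ref{lemma_support_calP_sci_C} we have $\supp(\calP\sci C)\subset(\supp\calP)\cap C$, so using axiom (1) of the cutting-off operation (associativity with intersection), $(\calP\sci C)\sci\supp(\calP\sci C)=(\calP\sci C)\sci\big((\supp\calP)\cap C\cap\supp(\calP\sci C)\big)=(\calP\sci C)\sci\supp(\calP\sci C)$, which is trivially true but not yet what I want. Instead, the cleaner route is: apply axiom (2) to $\calP\sci C$ itself — by definition of support, $(\calP\sci C)\sci D=\calP\sci C$ iff $D\supset\supp(\calP\sci C)$; taking $D=\supp(\calP\sci C)$ gives $(\calP\sci C)\sci\supp(\calP\sci C)=\calP\sci C$. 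Then $\calP\sci\supp(\calP\sci C)=\calP\sci\big(\supp(\calP\sci C)\cap\supp(\calP\sci C)\big)$, and since $\supp(\calP\sci C)\subset C$ by Lemma \ref{lemma_support_calP_sci_C}, I can write $\supp(\calP\sci C)=C\cap\supp(\calP\sci C)$, whence by axiom (1), $\calP\sci\supp(\calP\sci C)=\calP\sci\big(C\cap\supp(\calP\sci C)\big)=(\calP\sci C)\sci\supp(\calP\sci C)=\calP\sci C$. This is exactly $\calP\geqq\calP\sci C$.

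For part (2), assume $\calP\geqq\calQ$, i.e. $\calP\sci\supp\calQ=\calQ$, and fix $C\in\calC(X)$. I must show $(\calP\sci C)\sci\supp(\calQ\sci C)=\calQ\sci C$. The key computation chains the axioms together: by Lemma \ref{lemma_support_calP_sci_C}, $\supp(\calQ\sci C)\subset(\supp\calQ)\cap C$, so $\supp(\calQ\sci C)$ equals its own intersection with both $\supp\calQ$ and $C$. Then
\begin{align*}
(\calP\sci C)\sci\supp(\calQ\sci C)
&=\calP\sci\big(C\cap\supp(\calQ\sci C)\big)
=\calP\sci\supp(\calQ\sci C)\\
&=\calP\sci\big(\supp\calQ\cap\supp(\calQ\sci C)\big)
=(\calP\sci\supp\calQ)\sci\supp(\calQ\sci C)\\
&=\calQ\sci\supp(\calQ\sci C)
=\calQ\sci C,
\end{align*}
where the first and third equalities use $\supp(\calQ\sci C)\subset C$ and $\supp(\calQ\sci C)\subset\supp\calQ$ together with axiom (1), the fourth uses axiom (1) again, the fifth uses the hypothesis $\calP\sci\supp\calQ=\calQ$, and the last uses the same argument as in part (1) applied to $\calQ$ in place of $\calP$.

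I do not anticipate a genuine obstacle here; both parts are formal consequences of the two axioms plus Lemma \ref{lemma_support_calP_sci_C}. The only point requiring minor care is consistently rewriting a set $D$ with $D\subset C$ as $D=C\cap D$ so that axiom (1) can be invoked, and remembering that the identity $\calP\sci\supp\calP=\calP$ (the special case of axiom (2) with $C=\supp\calP$) is available for every abstract pattern. If anything is delicate, it is merely bookkeeping of which support contains which, so I would state the inclusions from Lemma \ref{lemma_support_calP_sci_C} explicitly at the start of each part.
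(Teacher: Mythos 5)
Your argument is correct: both parts follow exactly as you describe from axiom (1), the identity $\calR\sci\supp\calR=\calR$ (axiom (2) with $C=\supp\calR$), and the inclusion $\supp(\calP\sci C)\subset(\supp\calP)\cap C$ of Lemma \ref{lemma_support_calP_sci_C}, with the only care point being the rewriting $D=C\cap D$ for $D\subset C$, which you handle explicitly. The paper itself omits the proof (deferring to its reference), but your chain of equalities is the natural formal derivation and checks out line by line.
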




The supremum of $\Xi\subset\Pi$ with respect to this order $\geqq$ describes
``the union'' of $\Xi$ that is obtained by ``gluing'' elements of $\Xi$. We will discuss
this gluing operation in the next subsection.

\begin{defi}
       Let $\Xi$ be a subset of an abstract pattern space $\Pi$.
       If the supremum of $\Xi$ with respect to the order $\geqq$ defined in 
       Definition \ref{def_order_abstract pattern_space}
exists in $\Pi$,
       it is denoted by $\bigvee\Xi$.
\end{defi}

The following lemma describes a relation between $\bigvee$ and supports.
\begin{lem}\label{lem_support_supremum}
       If a subset $\Xi\subset\Pi$ admits the supremum $\bigvee\Xi$, then
       $\supp\bigvee\Xi=\overline{\bigcup_{\calP\in\Xi}\supp\calP}$.
\end{lem}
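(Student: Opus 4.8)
The plan is to prove the two inclusions $\supp\bigvee\Xi\supset\overline{\bigcup_{\calP\in\Xi}\supp\calP}$ and $\supp\bigvee\Xi\subset\overline{\bigcup_{\calP\in\Xi}\supp\calP}$ separately. Write $\calS=\bigvee\Xi$ and $C_0=\overline{\bigcup_{\calP\in\Xi}\supp\calP}$ for brevity.

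For the inclusion $C_0\subset\supp\calS$, I would use only that $\calS$ is an upper bound of $\Xi$, so that $\calS\geqq\calP$ for every $\calP\in\Xi$. By the first part of the lemma that follows Definition \ref{def_order_abstract pattern_space}, $\calS\geqq\calP$ forces $\supp\calS\supset\supp\calP$. Hence $\supp\calS$ is a closed set containing $\bigcup_{\calP\in\Xi}\supp\calP$, and therefore it contains the closure $C_0$ of that union.

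For the reverse inclusion the key move is to feed $\calS\sci C_0$ back into the \emph{minimality} of the supremum. First one checks that $\calS\sci C_0$ is again an upper bound of $\Xi$: given $\calP\in\Xi$ we have $\supp\calP\subset C_0$, so axiom 2 in the definition of an abstract pattern space gives $\calP\sci C_0=\calP$; combining $\calS\geqq\calP$ with Lemma \ref{lem_order_abstract pattern_space}(2) yields $\calS\sci C_0\geqq\calP\sci C_0=\calP$. Since $\calS$ is the least upper bound of $\Xi$, this gives $\calS\leqq\calS\sci C_0$, while Lemma \ref{lem_order_abstract pattern_space}(1) gives $\calS\geqq\calS\sci C_0$. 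Because $\geqq$ is an order (second part of the lemma after Definition \ref{def_order_abstract pattern_space}), antisymmetry gives $\calS\sci C_0=\calS$, and then axiom 2 applied to $\calS$ forces $C_0\supset\supp\calS$. Together with the first inclusion this proves $\supp\calS=C_0$.

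The argument is essentially bookkeeping with the axioms and with Lemma \ref{lem_order_abstract pattern_space}; the only step that takes a moment to spot is that cutting $\calS$ off by $C_0$ yields another upper bound of $\Xi$, which is exactly what lets the minimality of $\bigvee\Xi$ do the work.
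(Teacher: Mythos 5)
Your proof is correct and complete: both inclusions are established using exactly the tools available (axiom 2 of the definition, monotonicity of $\supp$ under $\geqq$, and Lemma \ref{lem_order_abstract pattern_space}), and the key observation that $(\bigvee\Xi)\sci C_0$ is again an upper bound of $\Xi$ is precisely what makes the minimality of the supremum usable. The paper itself defers this proof to the reference \cite{Nagai3rd}, so there is no in-text argument to compare against, but yours is the natural one and I see no gaps.
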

\begin{rem}
        It is not necessarily true that any element $\calP_{0}$ in $\Pi$
        that majorizes $\Xi$ and $\supp\calP_{0}=\overline{\bigcup_{\calP\in\Xi}\supp\calP}$
        is the supremum of $\Xi$ (\cite{Nagai3rd}).
\end{rem}

\subsection{Glueable abstract pattern spaces}
\label{subsection_glueable_pat_sp}

        In this subsection $\Pi$ is an abstract pattern space over $X$.

        Often we want to ``glue'' abstract patterns to obtain a larger abstract pattern.
        For example, suppose
         $\Xi$ is a collection of patches such that if $\calP,\calQ\in\Xi$,
         $S\in\calP$ and $T\in\calQ$, then we have either $S=T$ or $S\cap T=\emptyset$.
         Then we can ``glue'' patches in $\Xi$, that is, we can take 
         the union $\bigcup_{\calP\in\Xi}\calP$, which is also a patch.
         Abstract pattern spaces in which we can often
	 ``glue'' abstract patterns are called
         glueable abstract pattern spaces (Definition \ref{def_glueable_abstract pattern_space}).  This gluing operation is essential when we construct the limit of a
	 Cauchy sequence
	 (Theorem \ref{thm_sigma_complete_general_ver}).

	 We first introduce notions that are used to define ``glueable'' abstract
	 pattern spaces, where we can often ``glue'' abstract patterns.
\begin{defi}\label{def_local_finite_compatible}
      \begin{enumerate}
       \item Two abstract patterns $\calP,\calQ\in\Pi$ are said to be \emph{compatible}
              if there is $\calR\in\Pi$ such that $\calR\geqq\calP$ and
              $\calR\geqq\calQ$.
       \item A subset $\Xi\subset\Pi$ is said to be \emph{pairwise compatible} if
             any two elements $\calP,\calQ\in\Pi$ are compatible.
        \item A subset $\Xi\subset\Pi$ is said to be \emph{locally finite}
               if for any $x\in X$ and $r>0$, the set $\Xi\sci B(x,r)$, which is defined
	      via
	      \begin{align*}
	       \Xi\sci B(x,r)=\{\calP\sci B(x,r)\mid\calP\in\Xi\},
	      \end{align*}
               is finite.
      \end{enumerate}
\end{defi}

\begin{rem}
      We will see for many examples of abstract pattern spaces $\Pi$, a locally finite
 and pairwise compatible $\Xi\subset\Pi$ admits the supremum. We have to assume being
 pairwise compatible because if $\Xi\subset\Pi$ admits the supremum, any
 $\calP,\calQ\in\Xi$ are compatible (we can use the supremum for the role of $\calR$
 above). We have to assume local finiteness because without this there is a
 counterexample that do not admits the supremum (see \cite{Nagai3rd}).
\end{rem}

We use the following lemma to define glueable abstract pattern spaces.
\begin{lem}\label{lem_locally_finite_compatible}
        Let $\Xi$ be a subset of $\Pi$ and take $C\in\calC(X)$.
        Then the following hold.
        \begin{enumerate}
	 \item If $\Xi$ is locally finite, then so is $\Xi\sci C$, which is defined via
	       \begin{align*}
		    \Xi\sci C =\{\calP\sci C\mid\calP\in\Xi\}.
	       \end{align*}
         \item If $\Xi$ is pairwise compatible, then so is $\Xi\sci C$.
	\end{enumerate}            
\end{lem}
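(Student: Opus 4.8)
The plan is to prove each of the two statements by a direct computation from the axioms of an abstract pattern space, reducing everything to the identity $(\calP\sci C_1)\sci C_2=\calP\sci(C_1\cap C_2)$ together with the characterization of the support. The key observation for both parts is that cutting off by $C$ is an idempotent-flavored operation that commutes with any further cutting-off; concretely, for $\calP,\calQ\in\Pi$, $C,D\in\calC(X)$ one has $(\calP\sci C)\sci D = \calP\sci(C\cap D) = (\calP\sci D)\sci C$, and in particular $(\calP\sci C)\sci C = \calP\sci C$, so $\supp(\calP\sci C)\subset C$ by Lemma \ref{lemma_support_calP_sci_C} (and already $\supp(\calP\sci C)\subset\supp\calP\cap C$).

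For part (1), I would fix $x\in X$ and $r>0$ and show the map $\calP\sci C\mapsto (\calP\sci C)\sci B(x,r)$ on $\Xi\sci C$ has finite image, which is exactly the statement that $(\Xi\sci C)\sci B(x,r)$ is finite. Using the commutation identity, $(\calP\sci C)\sci B(x,r)=\calP\sci(C\cap B(x,r))=(\calP\sci B(x,r))\sci C$. Now the key point is that $(\calP\sci B(x,r))\sci C$ is determined by $\calP\sci B(x,r)$: the assignment $\calR\mapsto\calR\sci C$ is a well-defined map $\Pi\to\Pi$, so applying it to the finite set $\Xi\sci B(x,r)$ (finite by local finiteness of $\Xi$) yields a finite set containing $(\Xi\sci C)\sci B(x,r)$. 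Hence $(\Xi\sci C)\sci B(x,r)$ is finite, proving $\Xi\sci C$ is locally finite. I do not expect any obstacle here; it is a short diagram chase.

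For part (2), given $\calP\sci C,\calQ\sci C\in\Xi\sci C$ with $\calP,\calQ\in\Xi$, pairwise compatibility of $\Xi$ supplies $\calR\in\Pi$ with $\calR\geqq\calP$ and $\calR\geqq\calQ$. The natural candidate witnessing compatibility of $\calP\sci C$ and $\calQ\sci C$ is $\calR\sci C$. I would verify $\calR\sci C\geqq\calP\sci C$ as follows: by Lemma \ref{lem_order_abstract pattern_space}(2), $\calR\geqq\calP$ implies $\calR\sci C\geqq\calP\sci C$ directly. (Alternatively one unwinds the definition: $(\calR\sci C)\sci\supp(\calP\sci C)$, and using $\supp(\calP\sci C)\subset\supp\calP\cap C$ together with the commutation identity and $\calR\sci\supp\calP=\calP$, this collapses to $\calP\sci C$.) The same argument gives $\calR\sci C\geqq\calQ\sci C$, so $\calP\sci C$ and $\calQ\sci C$ are compatible, and since they were arbitrary elements of $\Xi\sci C$, the set $\Xi\sci C$ is pairwise compatible.

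The main thing to be careful about — and the only place where something could go subtly wrong — is the interplay between the support of $\calP\sci C$ and the set $\supp\calP\cap C$: Lemma \ref{lemma_support_calP_sci_C} only gives an inclusion, not equality, so I must make sure the order-preservation argument for part (2) uses Lemma \ref{lem_order_abstract pattern_space}(2) (which is clean) rather than trying to compute $\supp(\calP\sci C)$ explicitly. With that routing, both parts are immediate consequences of the axioms and the already-established lemmas, so the proof is essentially bookkeeping with the cutting-off identity.
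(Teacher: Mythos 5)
Your proof is correct: part (1) follows from the axiom $(\calP\sci C)\sci B(x,r)=(\calP\sci B(x,r))\sci C$, which exhibits $(\Xi\sci C)\sci B(x,r)$ as the image of the finite set $\Xi\sci B(x,r)$ under $\calR\mapsto\calR\sci C$, and part (2) follows by taking $\calR\sci C$ as the witness and invoking Lemma \ref{lem_order_abstract pattern_space}(2). The paper omits the proof of this lemma (deferring to the companion reference), but your argument is the natural one and uses exactly the lemmas already established, so there is nothing to flag.
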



\begin{defi}\label{def_glueable_abstract pattern_space}
          $\Pi$ is said to be glueable
          if the following two conditions hold:
          \begin{enumerate}
	   \item If $\Xi\subset\Pi$ is both locally finite and pairwise compatible,
                 then there is the supremum $\bigvee{\Xi}$ for $\Xi$.
           \item If $\Xi\subset\Pi$ is both locally finite and pairwise compatible,
                 then for any $C\in\calC(X)$,
                 \begin{align}
		     \bigvee(\Xi\sci C)=(\bigvee\Xi)\sci C.\label{eq_def_glueable}
		 \end{align}
	  \end{enumerate}
\end{defi}

\begin{rem}
       By Lemma \ref{lem_locally_finite_compatible}, for $\Xi\subset\Pi$
      which is locally finite and pairwise compatible 
       and $C\in\calC(X)$
       the left-hand side of the equation (\ref{eq_def_glueable}) makes sense.
\end{rem}

We finish this subsection with examples. The details are found in \cite{Nagai3rd}.
\begin{ex}
      Consider $\Pi=\Patch(X)$ (Example \ref{example_patch}).
      In this abstract pattern space, for two elements $\calP,\calQ\in\Patch(X)$,
       the following statements hold:
      \begin{enumerate}
       \item  $\calP\geqq\calQ\iff\calP\supset\calQ$.
       \item  $\calP$ and $\calQ$ are compatible if and only if for any $T\in\calP$ and
              $S\in\calQ$, either $S=T$ or $S\sci T=\emptyset$ holds.
      \end{enumerate}
       If $\Xi\subset\Patch(X)$ is pairwise compatible, then
       $\calP_{\Xi}=\bigcup_{\calP\in\Xi}\calP$ is a patch,
        which is the supremum of $\Xi$. If $C\in\calC(X)$, then
        \begin{align*}
	      (\bigvee\Xi)\sci C=(\bigcup_{\calP\in\Xi}\calP)\sci C
             =\bigcup(\calP\sci C)=\bigvee(\Xi\sci C).
	\end{align*}    
         $\Patch(X)$ is glueable.
\end{ex}

\begin{ex}
       For the abstract pattern space $2^X$ in Example \ref{example_2^X_calC(X)},
        two elements $A,B\in 2^X$
       are compatible if and only if 
       \begin{align*}
        \text{ $\overline{A}\cap B\subset A$ and
       $A\cap\overline{B}\subset B$.}
       \end{align*}

       Suppose $\Xi\subset 2^X$ is locally finite and pairwise compatible.
       Note that $\bigcup_{A\in\Xi}\overline{A}=\overline{\bigcup_{A\in\Xi}A}$.
       Set $A_{\Xi}=\bigcup_{A\in\Xi}A$. For each $A\in\Xi$,
       $A_{\Xi}\cap \overline{A}=\bigcup_{B\in\Xi}(B\cap\overline{A})=A$;
       $A_{\Xi}$ is a majorant of $\Xi$. If $B$ is also a majorant for $\Xi$,
      then
       \begin{align*}
	B\cap\overline{A_{\Xi}}=B\cap(\bigcup_{A\in\Xi}\overline{A})
                              =\bigcup_{A\in\Xi}(B\cap\overline{A})
                             =\bigcup_{A\in\Xi}A=A_{\Xi},
       \end{align*}
        and so $B\geqq A_{\Xi}$.
       It turns out that $A_{\Xi}$ is the supremum for $\Xi$.
       Moreover, if $C\in\calC(X)$, then
      $A_{\Xi}\sci C=\bigcup_{A\in\Xi}(A\cap C)=\bigvee(\Xi\cap C)$.
      Thus  $2^X$ is a glueable space.  
\end{ex}



 \begin{rem}
       Let $\Pi_0$ be a glueable abstract pattern space and $\Pi_1\subset\Pi_0$ an
       abstract pattern subspace. For any subset $\Xi\subset\Pi_1$,
       if  it is pairwise compatible
      in $\Pi_1$, then it is pairwise compatible in $\Pi_0$.  
      Moreover, whether a set is locally finite or not is independent of the
      ambient abstract pattern space in which the set is included.
      For a subset $\Xi\subset\Pi_1$ which is locally finite and
     pairwise compatible in $\Pi_1$, since $\Pi_0$ is glueable,
     there is the supremum $\bigvee\Xi$ in $\Pi_0$.
     If this supremum in $\Pi_0$ is always included in 
     $\Pi_1$, then $\Pi_1$ is glueable.

     By this remark it is easy to see the abstract pattern spaces $\calC(X)$ 
     (Example \ref{example_2^X_calC(X)}), $\LF(X)$
     (Example \ref{example_LF(X)}), and $\UD_r(X)$ (Example
      \ref{example_UD},
      $r$ is an arbitrary positive number) are
      glueable.

      However, $\UD(X)$ (Example \ref{example_UD}) is not necessarily glueable.
      For example, set $X=\mathbb{R}$.
      Set $\calP_n=\{n,n+\frac{1}{n}\}$ for each integer $n\neq 0$.
      Each $\calP_n$ is in $\UD(\mathbb{R})$, $\Xi=\{\calP_n\mid n\neq 0\}$
      is locally finite and pairwise compatible, but it does not admit the supremum.
 \end{rem}

\begin{lem}\label{lem_UDIr_glueable}
      Let $I$ be a non-empty set and $r$ be a positive real number. The abstract
      pattern space $\UD_r^I(X)$ (Example \ref{ex_multi_set}) is glueable.
\end{lem}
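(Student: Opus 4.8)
The plan is to reduce the glueability of $\UD_r^I(X)$ to the glueability of the ambient product space $\prod_{i\in I}\UD_r(X)$ via the remark preceding Lemma \ref{lem_UDIr_glueable}. By that remark it suffices to check two things: first, that $\prod_{i\in I}\UD_r(X)$ is glueable, and second, that whenever $\Xi\subset\UD_r^I(X)$ is locally finite and pairwise compatible (in $\UD_r^I(X)$), its supremum computed in $\prod_{i\in I}\UD_r(X)$ actually lands back in $\UD_r^I(X)$, i.e.\ the union of all coordinate sets of $\bigvee\Xi$ is $r$-uniformly discrete.

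For the first point: each $\UD_r(X)$ is glueable (stated in the remark above Lemma \ref{lem_UDIr_glueable}). I would prove that a product of glueable abstract pattern spaces is glueable. Given $\Xi\subset\prod_\lambda\Pi_\lambda$ locally finite and pairwise compatible, let $\pi_\lambda\colon\prod\Pi_\lambda\to\Pi_\lambda$ be the projection. One checks that $\pi_\lambda(\Xi)$ is locally finite and pairwise compatible in $\Pi_\lambda$ for each $\lambda$ (local finiteness because $\pi_\lambda(\calP)\sci B(x,r)$ is a coordinate of $\calP\sci B(x,r)$, and compatibility because compatibility of $(\calP_\lambda)_\lambda$ and $(\calQ_\lambda)_\lambda$ forces compatibility of each pair of $\lambda$-coordinates — using that $\geqq$ is defined coordinatewise, which follows from the coordinatewise cutting-off and the formula $\supp(\calP_\lambda)_\lambda=\overline{\bigcup_\lambda\supp\calP_\lambda}$ in Lemma \ref{lemmma_product_abstract pattern_space}). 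Then $\bigl(\bigvee\pi_\lambda(\Xi)\bigr)_\lambda$ is a candidate for $\bigvee\Xi$; I verify it is a majorant and that any majorant of $\Xi$ dominates it coordinatewise, so it is the supremum, and the identity $\bigvee(\Xi\sci C)=(\bigvee\Xi)\sci C$ follows coordinate by coordinate from the corresponding identity in each $\Pi_\lambda$ together with $(\cdot)\sci C$ being coordinatewise.

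For the second point — which I expect to be the main obstacle — I must show $\bigcup_i (\bigvee\Xi)_i$ is $r$-uniformly discrete. Write $E_i=(\bigvee\Xi)_i$; by the product description and the computation of suprema in $\UD_r(X)$ (which are just unions), $E_i=\bigcup_{(D_j)_j\in\Xi} D_i$, so $\bigcup_i E_i=\bigcup_{(D_j)_j\in\Xi}\bigcup_i D_i=\bigcup_{D\in\Xi'} D$ where $\Xi'=\{\bigcup_i D_i\mid (D_i)_i\in\Xi\}\subset\UD_r(X)$. The point is then to show $\bigcup_{D\in\Xi'}D$ is $r$-uniformly discrete. It is not enough that each member of $\Xi'$ is $r$-uniformly discrete; I need $\Xi'$ to be pairwise compatible in $\UD_r(X)$, and then invoke glueability of $\UD_r(X)$ (whose suprema are unions and stay in $\UD_r(X)$). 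So the crux is: if $(D_i)_i$ and $(D'_i)_i$ are compatible in $\UD_r^I(X)$, then $\bigcup_i D_i$ and $\bigcup_i D'_i$ are compatible in $\UD_r(X)$, equivalently (since in these subspaces of $\LF(X)$ compatibility of two elements amounts to their union being again in the space, because $\LF(X)$ is glueable and $\geqq$ is inclusion) $(\bigcup_i D_i)\cup(\bigcup_i D'_i)$ is $r$-uniformly discrete. A witness $\calR=(R_i)_i\in\UD_r^I(X)$ with $R_i\supset D_i$ and $R_i\supset D'_i$ for all $i$ (coordinatewise inclusion being what $\geqq$ means here) has $\bigcup_i R_i\supset(\bigcup_i D_i)\cup(\bigcup_i D'_i)$ and $\bigcup_i R_i\in\UD_r(X)$ by definition of $\UD_r^I(X)$; hence the union is $r$-uniformly discrete. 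Finally local finiteness of $\Xi'$ follows from that of $\Xi$ (a finite set of cut-offs $\Xi\sci B(x,s)$ yields a finite set of unions). Applying glueability of $\UD_r(X)$ to the locally finite, pairwise compatible $\Xi'$ gives $\bigcup_{D\in\Xi'}D\in\UD_r(X)$, i.e.\ $\bigcup_i E_i$ is $r$-uniformly discrete, so $\bigvee\Xi\in\UD_r^I(X)$, completing the verification and the proof.
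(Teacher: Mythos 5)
Your second step --- showing that the union of the coordinates of the candidate supremum is $r$-uniformly discrete --- is correct and is essentially the heart of the matter; the single-witness argument (an $\calR=(R_i)_i\in\UD_r^I(X)$ with $\bigcup_iR_i$ containing $(\bigcup_iD_i)\cup(\bigcup_iD_i')$ and lying in $\UD_r(X)$) is exactly the separation property the paper extracts from compatibility. The genuine gap is in the first step, and it is located in the phrase ``$\geqq$ is defined coordinatewise.'' This is false in a product abstract pattern space: $\calR\geqq\calQ$ means $\calR_\lambda\sci\supp\calQ=\calQ_\lambda$ for every $\lambda$, where $\supp\calQ=\overline{\bigcup_\mu\supp\calQ_\mu}$ runs over \emph{all} coordinates. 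The implication $\calR\geqq\calQ\Rightarrow\calR_\lambda\geqq\calQ_\lambda$ does hold (cut further by $\supp\calQ_\lambda$), so your claim that compatibility in the product forces coordinatewise compatibility is fine. But the converse fails: already in $\UD_r^{\{1,2\}}(\mathbb{R})$ with $r<1$ one has $\{0,1\}\geqq\{0\}$ and $\{0,1\}\geqq\{1\}$ in each coordinate, yet $(\{0,1\},\{0,1\})\not\geqq(\{0\},\{1\})$, since cutting by $\supp(\{0\},\{1\})=\{0,1\}$ returns $(\{0,1\},\{0,1\})$.

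Consequently neither of your two conclusions in the product step follows as stated: knowing $\bigvee\pi_\lambda(\Xi)\geqq\calQ_\lambda$ for each $\lambda$ does not show that the coordinatewise supremum is a majorant of $\Xi$ in the product order, and knowing that a majorant $T$ dominates it coordinatewise does not show that $T$ dominates it in the product order. What is needed --- and what the paper's proof is built around --- is the cross-coordinate consequence of compatibility: if $F\geqq D$ and $F\geqq E$ in $\UD_r^I(X)$, then
\begin{align*}
      p_i(D)\cap\supp E=p_i(F)\cap\supp D\cap\supp E\subset p_i(E).
\end{align*}
With this, $\bigl(\bigcup_{E\in\Xi}p_i(E)\bigr)\cap\supp E_0=p_i(E_0)$ for each $E_0\in\Xi$, which is precisely the statement that the coordinatewise union majorizes $E_0$ with respect to the product order; a similar computation gives minimality. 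This observation is not hard to supply, and your witness argument in the second step is a close cousin of it, but as written the product step rests on a false description of the order, so the proof is incomplete.
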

\begin{proof}
      Let $p_i\colon\UD_r^I(X)\rightarrow \UD_r(X)$ be the projection to $i$-th  element.
      For two $D,E\in\UD_r^I(X)$, if they are compatible, there is $F\in\UD_r^I(X)$
      such that $F\geqq D$ and $F\geqq E$. By the definition of cutting-off operation,
     we have
 $p_i(F)\cap \supp D=p_i(D)$ and $p_j(F)\cap \supp E=p_j(E)$
      for each $i,j\in I$, which means if $x\in p_i(D)$ and $y\in p_j(E)$, then
 we have either $x=y$ or $\rho_X(x,y)\geqq r$.
     We also have $p_i(D)\cap\supp E\subset p_i(F)\cap \supp E=p_i(E)$ for each $i$.

      Let $\Xi$ be a pairwise compatible subset of $\UD_r^I(X)$. For each $i$ set
      $D_i=\bigcup_{E\in\Xi}p_i(E)$. By the above observation, the tuple
      $D=(D_i)_{i\in I}$ is an element of $\UD_r^I(X)$. The above observation also implies
      that for each $E\in\Xi$, we have $D_i\cap\supp E=p_i(E)$, which means $D\geqq E$.
       We see $D$ is a majorant of $\Xi$, and
     since $\supp D=\bigcup_{E\in\Xi}\supp E$, we  see $D$ is the supremum.
\end{proof}
We mention the following proposition without proving it.
\begin{prop}
      If $Y$ is a non-empty set and $y_0\in Y$, the abstract pattern space
      $\Map(X,Y,y_0)$ (Example \ref{example_map})is glueable.
\end{prop}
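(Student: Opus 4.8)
The plan is to verify the two conditions in Definition~\ref{def_glueable_abstract pattern_space} directly, using the explicit description of the cutting-off operation on $\Map(X,Y,y_0)$ together with the characterization of compatibility and the order $\geqq$ in this space. The first step is to unwind what the order and compatibility mean here. For $f,g\in\Map(X,Y,y_0)$, one checks that $\supp f=\overline{\{x\mid f(x)\neq y_0\}}$ and that $f\geqq g$ holds precisely when $f$ and $g$ agree on $\{x\mid g(x)\neq y_0\}$ and $g$ already vanishes (equals $y_0$) outside a closed set carrying that defining set; more usefully, $f\geqq g$ iff $g = f\sci\supp g$. From this I would extract the pointwise criterion: $f$ and $g$ are compatible iff they agree at every point where both are non-$y_0$, and more precisely iff there is no conflict forced by the supports — the cleanest sufficient statement being that $f(x)=g(x)$ whenever $x\in\{f\neq y_0\}\cap\{g\neq y_0\}$, plus a closure condition analogous to the $2^X$ example ($\overline{\{f\neq y_0\}}\cap\{g\neq y_0\}\subset\{f\neq y_0\}$ and symmetrically).

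Next, given a locally finite, pairwise compatible family $\Xi\subset\Map(X,Y,y_0)$, I would construct the candidate supremum $h$ pointwise: for $x\in X$, set $h(x)=f(x)$ if some $f\in\Xi$ has $f(x)\neq y_0$, and $h(x)=y_0$ otherwise. Pairwise compatibility makes this well-defined (any two $f,g\in\Xi$ that are both non-$y_0$ at $x$ agree there). Then I must check three things: (a) $h\geqq f$ for every $f\in\Xi$, i.e.\ $h$ agrees with $f$ on $\{f\neq y_0\}$, which is immediate from the construction, together with the support condition, which is where local finiteness and the closure part of compatibility enter — one needs $\supp h=\overline{\bigcup_{f\in\Xi}\supp f}$ and that $h\sci\supp f=f$, so $f$ must not be "killed" by $\supp f$ being too small, which follows from $\supp f$ already being the support of $f$ as an element of $\Xi\subset\Map$; (b) $h$ is the least upper bound: if $h'\geqq f$ for all $f$, then wherever $h(x)\neq y_0$ we have $f(x)\neq y_0$ for some $f$, so $h'(x)=f(x)=h(x)$, and a short computation with supports (mirroring the $2^X$ example's $B\cap\overline{A_\Xi}$ calculation) gives $h'\geqq h$; (c) the identity $\bigvee(\Xi\sci C)=(\bigvee\Xi)\sci C$, which reduces to the pointwise statement that cutting off before or after taking the pointwise "overlay" gives the same function — on $C$ both sides equal $h$, off $C$ both equal $y_0$ — together with $\Xi\sci C$ still being locally finite and pairwise compatible by Lemma~\ref{lem_locally_finite_compatible}.

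The main obstacle I anticipate is the role of \emph{local finiteness} and the precise form of the \emph{closure condition} in compatibility. For plain pointwise functions, the naive overlay $h$ is well-defined from pairwise compatibility alone, so one might worry local finiteness is not needed; the subtlety is entirely in the support/closure bookkeeping, i.e.\ ensuring $\supp h$ is computed correctly and that $h$ genuinely restricts back to each $f$ via $h\sci\supp f=f$ — this can fail if $\{f\neq y_0\}$ is not closed relative to the ambient union, exactly as in the $2^X$ example where compatibility required $\overline{A}\cap B\subset A$. So the careful step is to pin down the right compatibility criterion in $\Map(X,Y,y_0)$ (presumably: $f(x)=g(x)$ on $\{f\neq y_0\}\cap\{g\neq y_0\}$ \emph{and} the two closure containments) and then check that with that criterion the pointwise overlay has support exactly $\overline{\bigcup_f\supp f}$ and satisfies $h\sci\supp f = f$. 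Once that is done, verifying (\ref{eq_def_glueable}) is a routine pointwise check and the proposition follows. I would also note briefly that local finiteness guarantees $\Xi\sci B(x,r)$ finite, which is what lets one control $\supp h$ locally and identify it with the closure of the union of supports, paralleling Lemma~\ref{lem_support_supremum}.
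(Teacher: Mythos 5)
The paper itself states this proposition without proof, so there is nothing to compare against directly; judged on its own, your argument is correct and is the natural one, running parallel to the paper's worked example for $2^X$. You identify the right pointwise overlay $h$ as the candidate supremum, the right compatibility criterion (agreement on $\supp f\cap\supp g$, which forces the closure containments $\supp f\cap\{g\neq y_0\}\subset\{f\neq y_0\}$), and the right place where local finiteness enters. Two small points to tighten when writing it out. First, the pointwise meaning of $f\geqq g$ is agreement of $f$ and $g$ on all of $\supp g=\overline{\{g\neq y_0\}}$, including boundary points where $g=y_0$; this is exactly what makes the verification $h\sci\supp f=f$ nontrivial at such points, and it is resolved by the compatibility of $f$ with the other members of $\Xi$ (if $g(x)\neq y_0$ for some $g\in\Xi$ and $x\in\supp f$, a common majorant of $f$ and $g$ would have to take both values $y_0$ and $g(x)$ at $x$). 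Second, the cleanest way to phrase the role of local finiteness is that it makes $\bigcup_{f\in\Xi}\supp f$ \emph{closed} (locally it is a finite union of closed sets, by the same computation as the paper's remark $\bigcup_{A\in\Xi}\overline{A}=\overline{\bigcup_{A\in\Xi}A}$ in the $2^X$ example); hence $\supp h=\bigcup_{f\in\Xi}\supp f$, every point of $\supp h$ lies in some $\supp f$ where any majorant $h'$ must agree with $f=h$, and the least-upper-bound property follows. Without that closedness the minimality argument genuinely breaks, since a majorant is unconstrained at limit points outside every $\supp f$. With these two points made explicit, the verification of $\bigvee(\Xi\sci C)=(\bigvee\Xi)\sci C$ is the routine pointwise check you describe, and the proof is complete.
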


\subsection{Zero Element and Its Uniqueness}
\label{subsection_zero_elements}
In this subsection we discuss zero elements, which is on the list of structures
at the beginning of this section.
\begin{defi}
      Let $\Pi$ be an abstract pattern space over $X$.
      An element $\calP\in\Pi$ such that $\supp\calP=\emptyset$ is called
      a zero element of $\Pi$. If there is only one zero element in $\Pi$, 
      it is denoted by $0$.
\end{defi}

\begin{rem}
      Zero elements always exist. In fact, take an arbitrary element $\calP\in\Pi$.
      Then by Lemma \ref{lemma_support_calP_sci_C}, $\supp(\calP\sci\emptyset)=\emptyset$
       and so $\calP\sci\emptyset$ is a zero element.
\end{rem}

\begin{lem}\label{uniqueness_zero_element}
        If $\Pi$ is a glueable abstract pattern space over $X$,
 there is only one zero element in $\Pi$.
\end{lem}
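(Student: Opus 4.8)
The plan is to produce a canonical zero element as the supremum of the \emph{empty} family, and then to show that every zero element coincides with it.

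First I would check that the empty subset $\Xi=\emptyset$ of $\Pi$ is both locally finite and pairwise compatible: for any $x\in X$ and $r>0$ the set $\emptyset\sci B(x,r)$ is empty and hence finite, while pairwise compatibility holds vacuously. Since $\Pi$ is glueable, part 1 of Definition \ref{def_glueable_abstract pattern_space} guarantees that $\bigvee\emptyset$ exists in $\Pi$; denote it by $0$. By Lemma \ref{lem_support_supremum} we get $\supp 0=\overline{\bigcup_{\calP\in\emptyset}\supp\calP}=\emptyset$, so $0$ is a zero element. Moreover $0=\bigvee\emptyset$ is the least element of $\Pi$, because every element of $\Pi$ is vacuously an upper bound of $\emptyset$; consequently $\calP\geqq 0$ for every $\calP\in\Pi$.

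Next, let $\calP$ be an arbitrary zero element. Unwinding $\calP\geqq 0$ through Definition \ref{def_order_abstract pattern_space} gives $\calP\sci\supp 0=0$, that is, $\calP\sci\emptyset=0$. On the other hand, since $\supp\calP=\emptyset$, axiom 2 in the definition of an abstract pattern space forces $\calP\sci C=\calP$ for every $C\in\calC(X)$, in particular $\calP\sci\emptyset=\calP$. Comparing the two identities yields $\calP=0$, so the zero element is unique.

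The argument is elementary and I do not expect any real obstacle; the one point that deserves a moment's attention is simply recognizing that the empty family is an admissible input for the gluing operation, after which $\bigvee\emptyset$ is forced to be the bottom element and the remainder is bookkeeping with the support axiom.
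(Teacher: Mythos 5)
Your proof is correct and is essentially the same as the paper's: both take $\bigvee\emptyset$, use Lemma \ref{lem_support_supremum} to see it is a zero element, note that any zero element $\calQ$ majorizes $\emptyset$ so $\calQ\geqq\bigvee\emptyset$, and conclude from $\calQ\sci\emptyset=\calQ$ that the two coincide. No issues.
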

\begin{proof}
        The subset  $\emptyset$ of $\Pi$ is locally finite and pairwise compatible.
        Set $\calP=\bigvee\emptyset$.
       By Lemma \ref{lem_support_supremum}, $\calP$ is a zero element.
       If $\calQ$ is a zero element, then since
        $\calQ$ is a majorant for $\emptyset$, we see $\calQ\geqq\calP$.
        We have $\calQ=\calQ\sci\emptyset=\calP$.
\end{proof}

\begin{lem}\label{lem_supremum_Xi_and_Xi_zero}
        Let $\Pi$ be a glueable abstract pattern space over $X$.
        Take a locally finite and pairwise compatible subset $\Xi$ of $\Pi$.
        Then $\bigvee\Xi\cup \{0\}$ exists and
        $\bigvee\Xi\cup\{0\}=\bigvee\Xi$.
\end{lem}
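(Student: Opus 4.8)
The plan is to show that $\Xi\cup\{0\}$ again meets the two hypotheses needed to invoke glueability, and then to observe that adjoining the zero element changes neither the set of upper bounds nor, therefore, the supremum. First, note that $0$ is available: since $\Pi$ is glueable, Lemma \ref{uniqueness_zero_element} provides a unique zero element. The preliminary fact I would isolate is that $\calP\geqq 0$ for every $\calP\in\Pi$. Indeed, by Lemma \ref{lemma_support_calP_sci_C} the element $\calP\sci\emptyset$ has empty support, hence equals $0$ by uniqueness; since $\supp 0=\emptyset$, this gives $\calP\sci\supp 0=\calP\sci\emptyset=0$, i.e.\ $\calP\geqq 0$.

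Next I would verify that $\Xi\cup\{0\}$ is pairwise compatible and locally finite. For pairwise compatibility it suffices to see that $0$ is compatible with each $\calP\in\Xi$ (and with itself), and here $\calP$ itself serves as the required common majorant, since $\calP\geqq\calP$ trivially and $\calP\geqq 0$ by the preliminary fact. Local finiteness holds because for every $x\in X$ and $r>0$ we have $(\Xi\cup\{0\})\sci B(x,r)=(\Xi\sci B(x,r))\cup\{0\sci B(x,r)\}=(\Xi\sci B(x,r))\cup\{0\}$, which is finite since $\Xi$ is locally finite. Hence, by the first clause of Definition \ref{def_glueable_abstract pattern_space}, the supremum $\bigvee(\Xi\cup\{0\})$ exists in $\Pi$.

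Finally, to identify $\bigvee(\Xi\cup\{0\})$ with $\bigvee\Xi$, I would show the two subsets have the same set of upper bounds. Any majorant of $\Xi\cup\{0\}$ is plainly a majorant of $\Xi$. Conversely, if $\calR\in\Pi$ majorizes $\Xi$, then $\calR\geqq 0$ automatically (again the preliminary fact), so $\calR$ majorizes $\Xi\cup\{0\}$ as well. Two subsets of a poset with the same upper bounds have the same least upper bound whenever one exists, giving $\bigvee(\Xi\cup\{0\})=\bigvee\Xi$; this argument also covers the degenerate case $\Xi=\emptyset$, where both sides are $0=\bigvee\emptyset$. I do not anticipate a real obstacle: the only point requiring a moment's care is the universal inequality $\calP\geqq 0$, which itself rests on the uniqueness of the zero element in a glueable space (Lemma \ref{uniqueness_zero_element}); the remainder is routine use of the cutting-off axioms.
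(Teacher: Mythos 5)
Your proof is correct. The paper does not actually print a proof of this lemma (it is among those deferred to the reference \cite{Nagai3rd}), but your argument is the natural one and is complete: the key observation $\calP\geqq 0$ for all $\calP$ (via $\supp(\calP\sci\emptyset)=\emptyset$ and uniqueness of the zero element in a glueable space) correctly yields both that $\Xi\cup\{0\}$ remains locally finite and pairwise compatible and that it has the same upper bounds, hence the same supremum, as $\Xi$.
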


\subsection{$\Gamma$-abstract pattern spaces over $X$, or abstract pattern spaces over $(X,\Gamma)$}
\label{subsection_gammma-abstract pattern_space}
Here we incorporate group actions to the theory of abstract pattern spaces.
First we define abstract pattern spaces over $(X,\Gamma)$, or $\Gamma$-abstract pattern spaces over $X$.
We require there is an action of the group $\Gamma$ on such an abstract pattern space and
the cutting-off operation is
equivariant.

       In this subsection,  $\Pi$ is an
       abstract pattern space over $X$.

\begin{defi}\label{def_gamma-abstract pattern_space}
       Suppose there is a group action $\Gamma\curvearrowright\Pi$ such that 
       for each $\calP\in\Pi, C\in\calC(X)$ and $\gamma\in\Gamma$,
       we have $(\gamma\calP)\sci (\gamma C)=\gamma(\calP\sci C)$, that is, the
 cutting-off operation
       is equivariant.
       Then we say $\Pi$ is a \emph{$\Gamma$-abstract pattern space over $X$}
       or a \emph{abstract pattern space over $(X,\Gamma)$}.
        For an
        abstract pattern space $\Pi$ over $(X,\Gamma)$, its nonempty subset $\Sigma$
       such that $\calP\in\Sigma$ and $\gamma\in\Gamma$ imply $\gamma\calP\in\Sigma$
      is called a \emph{subshift} of $\Pi$.
\end{defi}

We first investigate the relation between the group action and the two construction of abstract pattern spaces,
taking subspace and taking product.
\begin{lem}\label{lem_abstract pattern_subspace_over_X_Gamma}
     Let $\Pi$ be an abstract pattern space over $(X,\Gamma)$.
       Suppose $\Pi'$ is an abstract pattern subspace of
      $\Pi$. If $\Pi'$ is closed under the $\Gamma$-action, then $\Pi'$ is an abstract pattern space
       over $(X,\Gamma)$.
\end{lem}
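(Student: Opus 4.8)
The plan is to observe that every ingredient needed is inherited by restriction, so the proof reduces to three short verifications. First I would recall, as already noted in the remark following Definition \ref{def_subspace}, that $\Pi'$ equipped with the restricted cutting-off operation is an abstract pattern space over $X$: axiom (1) and the existence and uniqueness of supports hold in $\Pi'$. A point worth making explicit is that, since the cutting-off operation of $\Pi'$ is literally the restriction of that of $\Pi$, for any $\calP\in\Pi'$ the closed set $C_{\calP}$ characterized by $\calP\sci C=\calP\iff C\supset C_{\calP}$ is the same whether $\calP$ is viewed in $\Pi$ or in $\Pi'$; hence the support of $\calP$ computed in $\Pi'$ coincides with the one computed in $\Pi$, and there is no ambiguity in the word ``support'' when we pass between the two spaces.

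Second, I would define the $\Gamma$-action on $\Pi'$ to be the restriction of the given action $\Gamma\curvearrowright\Pi$. This is well defined precisely by the hypothesis that $\Pi'$ is closed under the $\Gamma$-action: for $\calP\in\Pi'$ and $\gamma\in\Gamma$ we have $\gamma\calP\in\Pi'$. It is a group action because the restriction of a group action to an invariant subset is again a group action; the identities saying that the neutral element acts trivially and that $\gamma(\eta\calP)=(\gamma\eta)\calP$ are simply inherited from $\Pi$.

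Third, I would check equivariance of the cutting-off operation on $\Pi'$. Take $\calP\in\Pi'$, $C\in\calC(X)$ and $\gamma\in\Gamma$. Since $\Pi$ is an abstract pattern space over $(X,\Gamma)$ in the sense of Definition \ref{def_gamma-abstract pattern_space}, we have $(\gamma\calP)\sci(\gamma C)=\gamma(\calP\sci C)$ in $\Pi$. Both sides lie in $\Pi'$: the left-hand side because $\gamma\calP\in\Pi'$ and $\Pi'$ is an abstract pattern subspace, the right-hand side because $\calP\sci C\in\Pi'$ and $\Pi'$ is $\Gamma$-invariant. As the cutting-off operation and the $\Gamma$-action on $\Pi'$ are by definition the restrictions of those on $\Pi$, the same identity holds in $\Pi'$. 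Combining the three steps shows $\Pi'$ is an abstract pattern space over $(X,\Gamma)$.

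There is no substantive obstacle here; the only subtlety to flag is the consistency of supports in the first step, which is what makes the transfer of the equivariance identity along the restriction maps unambiguous. Everything else is a routine diagram chase of inherited identities.
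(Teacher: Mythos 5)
Your proof is correct and is exactly the routine verification the paper has in mind (it states this lemma without proof): the restricted cutting-off operation makes $\Pi'$ an abstract pattern space over $X$, the $\Gamma$-invariance hypothesis lets the action restrict, and the equivariance identity is inherited pointwise from $\Pi$. The remark about the support being unambiguous under restriction is a sensible extra check, though not strictly needed since equivariance is stated purely in terms of the cutting-off operation.
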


 \begin{lem}\label{lem_product_Gamma-abstract pattern_space}
       Let $\Lambda$ be a set and $(\Pi_{\lambda})_{\lambda\in\Lambda}$ be a family of
       abstract pattern spaces over $(X,\Gamma)$.
       Then $\Gamma$ acts on the product space $\prod_{\lambda}\Pi_{\lambda}$ by 
       $\gamma(\calP_{\lambda})_{\lambda}=(\gamma\calP_{\lambda})_{\lambda}$ and
      by this action $\prod_{\lambda}\Pi_{\lambda}$ is an abstract pattern space over $(X,\Gamma)$.
 \end{lem}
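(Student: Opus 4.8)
The plan is to verify both assertions coordinatewise, reducing everything to the corresponding facts already available in each factor $\Pi_{\lambda}$. First I would check that $\gamma\cdot(\calP_{\lambda})_{\lambda}=(\gamma\calP_{\lambda})_{\lambda}$ genuinely defines a left $\Gamma$-action on $\prod_{\lambda}\Pi_{\lambda}$: the formula produces an element of the product because each $\gamma\calP_{\lambda}\in\Pi_{\lambda}$, and the two group-action axioms, namely $e\cdot(\calP_{\lambda})_{\lambda}=(\calP_{\lambda})_{\lambda}$ and $(\gamma\eta)\cdot(\calP_{\lambda})_{\lambda}=\gamma\cdot\bigl(\eta\cdot(\calP_{\lambda})_{\lambda}\bigr)$, hold because they hold in each coordinate, where $\Gamma$ acts by hypothesis.

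Next I would recall from Lemma \ref{lemmma_product_abstract pattern_space} that $\prod_{\lambda}\Pi_{\lambda}$ is an abstract pattern space under the cutting-off operation $(\calP_{\lambda})_{\lambda}\sci C=(\calP_{\lambda}\sci C)_{\lambda}$, so the only remaining point is that this operation is $\Gamma$-equivariant in the sense of Definition \ref{def_gamma-abstract pattern_space}. One first notes that $\gamma C\in\calC(X)$ for every $C\in\calC(X)$ and $\gamma\in\Gamma$, since $\Gamma$ acts on $X$ by isometries and hence by homeomorphisms; this makes the expression $\bigl(\gamma(\calP_{\lambda})_{\lambda}\bigr)\sci(\gamma C)$ meaningful. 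Then I would compute, coordinate by coordinate,
\begin{align*}
\bigl(\gamma(\calP_{\lambda})_{\lambda}\bigr)\sci(\gamma C)
=(\gamma\calP_{\lambda})_{\lambda}\sci(\gamma C)
=\bigl((\gamma\calP_{\lambda})\sci(\gamma C)\bigr)_{\lambda}
=\bigl(\gamma(\calP_{\lambda}\sci C)\bigr)_{\lambda}
=\gamma\,(\calP_{\lambda}\sci C)_{\lambda}
=\gamma\bigl((\calP_{\lambda})_{\lambda}\sci C\bigr),
\end{align*}
where the middle equality uses that each $\Pi_{\lambda}$ is an abstract pattern space over $(X,\Gamma)$, so that its own cutting-off operation is equivariant. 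This is exactly the equivariance demanded by Definition \ref{def_gamma-abstract pattern_space}, hence $\prod_{\lambda}\Pi_{\lambda}$ is an abstract pattern space over $(X,\Gamma)$.

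I do not expect any genuine obstacle here: once Lemma \ref{lemmma_product_abstract pattern_space} is in hand, the statement is a routine coordinatewise bookkeeping argument. The only place calling for a word of care is the observation that $\gamma C$ is again a closed subset of $X$, so that both sides of the equivariance identity are defined; this is immediate from the Setting, in which $\Gamma$ is assumed to act on $X$ by isometries.
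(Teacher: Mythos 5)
Your proof is correct: the coordinatewise verification of the action axioms and of the equivariance $(\gamma\calP)\sci(\gamma C)=\gamma(\calP\sci C)$, resting on Lemma \ref{lemmma_product_abstract pattern_space} and the equivariance in each factor $\Pi_{\lambda}$, is exactly the intended argument. The paper omits the proof (deferring to \cite{Nagai3rd}), and your write-up, including the remark that $\gamma C$ is closed because $\Gamma$ acts by isometries, fills it in correctly.
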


\begin{defi}\label{def_product_Gamma_abstract pattern_spd}
       The abstract pattern space $\prod\Pi_{\lambda}$ is called the product $\Gamma$-abstract pattern space.
\end{defi}


We now list several examples of $\Gamma$-abstract pattern spaces.

\begin{ex}\label{ex_patch_Gamma_abstract pattern_sp}
     For $\calP\in\Patch(X)$ and $\gamma\in\Gamma$,
      set $\gamma\calP=\{\gamma T\mid T\in\calP\}$.
     This defines an action of $\Gamma$ on $\Patch(X)$ and makes $\Patch(X)$ an
     abstract pattern space over $(X,\Gamma)$.

     For an $L$-labeled tile $(T,l)$ and $\gamma\in\Gamma$, set
      $\gamma(T,l)=(\gamma T,l)$. This defines an action of $\Gamma$ on $\Patch_L(X)$
       (Example \ref{ex_L-labeled_tiling}), which makes it a $\Gamma$-abstract pattern
     space.
\end{ex}

\begin{ex}\label{2X_as_Gamma_abstract pattern_sp}
     $2^X$ (Example \ref{example_2^X_calC(X)}) is an abstract pattern space over $(X,\Gamma)$,
    by the action $\Gamma\curvearrowright 2^X$ inherited from the action
     $\Gamma\curvearrowright X$.
    By Lemma \ref{lem_abstract pattern_subspace_over_X_Gamma},  the spaces $\LF(X)$(Example \ref{example_LF(X)}),  $\calC(X)$ (Example \ref{example_2^X_calC(X)}),
     $\UD(X)$ 
 and $\UD_r(X)$ (Example \ref{example_UD}, $r>0$) are all
     abstract pattern spaces over $(X,\Gamma)$.

      The abstract pattern space $\UD^I(X)$ (Example \ref{ex_multi_set}) is also
      an $\Gamma$-abstract pattern space.
\end{ex}

\begin{ex}\label{ex_map_rho}
       Take a non-empty set $Y$, an element $y_0\in Y$ and
      and an action $\phi\colon\Gamma\curvearrowright Y$ that fixes $y_0$.
      As was mentioned before (Example \ref{example_map}), 
       $\Map (X,Y,y_0)$ is an abstract pattern space over $X$.       
       Define an action of $\Gamma$ on $\Map(X,Y,y_0)$ by
        \begin{align*}
	       (\gamma f)(x)=\phi(\gamma)(f(\gamma^{-1}x)).
	\end{align*}
        By this group action $\Map(X,Y,y_0)$ is $\Gamma$-abstract pattern space.
         This $\Gamma$-abstract pattern space is denoted by $\Map_{\phi}(X,Y,y_0)$.
         If $\phi$ sends every group element to the identity, we denote the corresponding
         space by $\Map(X,Y,y_0)$.
          This group action is essential when we study pattern-equivariant functions,
         since if $\calP$ is an abstract pattern,
  a function $f\in\Map_{\phi}(X,Y,y_0)$ is $\calP$-equivariant if and only
 if $f$ is locally derivable from $\calP$ (\cite{Nagai3rd}).
\end{ex}

\begin{ex}\label{ex_Gamma-abstract pattern_space_measures}
       The dual space $C_c(X)'$ is an abstract pattern space over $X$
       (Example \ref{ex_space_of_measures}).
       For $\varphi\in C_c(X)$ and $\gamma\in\Gamma$, set
      $(\gamma\varphi)(x)=\varphi(\gamma^{-1}x)$.
       For $\Phi\in C_c(X)'$ and $\gamma\in\Gamma$, set
       $\gamma\Phi(\varphi)=\Phi(\gamma^{-1}\varphi)$.
       Then $C_c(X)'$ is an abstract pattern space over $(X,\Gamma)$.

       Let $r$ be a positive real number.
       The space $\WDC_r(X)$ of all weighted Dirac combs $\sum_{x\in D}w(x)\delta_x$,
       where $D$ is an $r$-uniformly discrete subset of $X$,
       $w\colon D\rightarrow\mathbb{C}\setminus\{0\}$ and $\delta_x$ is the Dirac measure
       on $x$, is a abstract pattern space over $(X,\Gamma)$, which is a subshift of
       $C_c(X)'$.
\end{ex}

We mention two examples of subshifts.
\begin{ex}\label{ex_subshift_of_Delone_sets}
        The set $\Del(X)$ of all Delone sets in $X$ is a subshift of $\UD(X)$
        (Example \ref{example_UD}).
\end{ex}

\begin{ex}
 The space of all tilings is a subshift of $\Patch(X)$ (Example \ref{example_patch}).
\end{ex}

In the previous subsection, we defined glueable abstract pattern spaces.
Here, we define corresponding notions for $\Gamma$-abstract pattern spaces and
subshifts.

\begin{defi}
        Assume $\Pi$ is an abstract pattern space over $(X,\Gamma)$.
        We say $\Pi$ is a \emph{glueable abstract pattern space over $(X,\Gamma)$}
         if it is a glueable
        abstract pattern space over $X$.
        For a glueable abstract pattern space $\Pi$, its subshift $\Sigma$ is said to be 
        \emph{supremum-closed}
         if for any pairwise compatible and locally finite $\Xi\subset\Sigma$,
        we have $\bigvee\Xi\in\Sigma$.
\end{defi}

\begin{rem}
      In the definition of supremum-closed subshifts, the supremum $\bigvee\Xi$ exists
      by the definition of glueable abstract pattern space.
\end{rem}

\section{The definition and properties of local matching topology}
\label{subsection_def_local_matching_top}

\begin{nota}
       Let $\Cpt(X)$ be the set of all compact subsets of $X$ and
       $\mathscr{V}$ the set of all compact neighborhoods of $e\in\Gamma$
       (the identity element of $\Gamma$).
\end{nota}

In this section $\Pi$ is an abstract pattern space over $(X,\Gamma)$.

In this section we define and investigate the local matching topology on $\Pi$.
We use the theory of uniform structure to define them.
(For the theory of uniform structures, see \cite{MR1726779}.)
The uniform structure will be metrizable (Corollary \ref{102557_9Aug18}),
but the description of a metric is
not simple when $\Gamma$ is non-commutative, and this is why we prefer uniform structures.
With respect to this uniform structure, two abstract patterns $\calP$ and $\calQ$ in $\Pi$
are ``close'' when they match in a ``large region'' after sliding $\calQ$ by 
a ``small'' $\gamma\in\Gamma$.
This is analogous to the product topology of the space $\calA^{\mathbb{Z}}$, where
$\calA$ is a finite set; in fact we can show on this space the relative topology of
the local matching topology on a space of maps $\Map(\mathbb{Z},\calA\cup\{*\})$
($*$ is a point outside $\calA$)
 coincides with the product topology.

Here is the plan of this section. In Subsection \ref{subsection_def_LMT} we define
the local matching uniform structure and topology.
In Subsection \ref{subsection_hausdorff_property} we give a sufficient condition
for the local matching topology on a subshift to be Hausdorff, and prove many examples
of subshifts satisfies this condition.
In Subsection \ref{Subsection_complete} we give a mild condition that assures that
the local matching uniform structure is complete, and show if the action
$\Gamma\curvearrowright X$ is proper, the usual definition of FLC implies the compactness
of the continuous hull (the closure of the orbit).
 
\subsection{The definition of local matching topology}
\label{subsection_def_LMT}
We first define the following notation, which will be used to define a uniform structure.
\begin{defi}
        For $K\in\Cpt(X)$ and $V\in\mathscr{V}$, set
        \begin{align*}
	      \calU_{K,V}=\{(\calP,\calQ)\in\Pi\times\Pi\mid 
                 \text{there is $\gamma\in V$ such that 
                $\calP\sci K=(\gamma \calQ)\sci K$}\}.
	\end{align*}
\end{defi}

We first remark the following lemma, the proofs of which are easy.
\begin{lem}\label{lem_for_UKV_fund_sys_entourage}
       If $K_1\subset K_2$ and $V_2\subset V_1$, then
       $\calU_{K_2,V_2}\subset\calU_{K_1,V_2}$.
\end{lem}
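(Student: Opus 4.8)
The plan is to reduce everything to the first axiom of an abstract pattern space, $(\calP\sci C_1)\sci C_2=\calP\sci(C_1\cap C_2)$, together with the observation that every compact subset of the metric space $X$ is closed and hence belongs to $\calC(X)$, so that the cutting-off operation may legitimately be applied to $K_1$ and $K_2$ (and, since $\Pi$ is an abstract pattern space over $(X,\Gamma)$, to $\gamma\calQ\in\Pi$ as well).

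First I would fix a pair $(\calP,\calQ)\in\calU_{K_2,V_2}$ and unwind the definition: there is $\gamma\in V_2$ with $\calP\sci K_2=(\gamma\calQ)\sci K_2$. Cutting both sides of this identity off by $K_1$ gives $(\calP\sci K_2)\sci K_1=((\gamma\calQ)\sci K_2)\sci K_1$. Applying axiom (1) to each side, the left-hand side equals $\calP\sci(K_2\cap K_1)$ and the right-hand side equals $(\gamma\calQ)\sci(K_2\cap K_1)$; since $K_1\subset K_2$ we have $K_2\cap K_1=K_1$, so these are $\calP\sci K_1$ and $(\gamma\calQ)\sci K_1$. Hence $\calP\sci K_1=(\gamma\calQ)\sci K_1$ with the same $\gamma\in V_2$, which is precisely the condition for $(\calP,\calQ)\in\calU_{K_1,V_2}$. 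This establishes $\calU_{K_2,V_2}\subset\calU_{K_1,V_2}$.

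I expect no genuine obstacle: the argument is a one-line computation from the axioms, with the $\Gamma$-action entering only to guarantee that $\gamma\calQ$ is again an abstract pattern on which cutting-off makes sense. I would, however, add the remark that the hypothesis $V_2\subset V_1$ is not used in the displayed inclusion but is recorded for bookkeeping: any $\gamma$ witnessing membership in $\calU_{K_1,V_2}$ automatically lies in $V_1$, so $\calU_{K_1,V_2}\subset\calU_{K_1,V_1}$ trivially, and chaining the two inclusions yields $\calU_{K_2,V_2}\subset\calU_{K_1,V_1}$ --- the monotonicity in both arguments that makes the family $\{\calU_{K,V}\}_{K\in\Cpt(X),\,V\in\mathscr{V}}$ a natural candidate for a fundamental system of entourages.
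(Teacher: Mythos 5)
Your proof is correct and is exactly the one-line computation the paper has in mind when it declares the proof ``easy'' and omits it: cut both sides of $\calP\sci K_2=(\gamma\calQ)\sci K_2$ by the closed set $K_1$ and use axiom (1) with $K_2\cap K_1=K_1$. Your closing remark is also well taken --- as stated the lemma never uses $V_2\subset V_1$ (the conclusion $\calU_{K_2,V_2}\subset\calU_{K_1,V_2}$ keeps $V_2$ on both sides, almost certainly a typo for $\calU_{K_1,V_1}$), and the chained inclusion $\calU_{K_2,V_2}\subset\calU_{K_1,V_1}$ you derive is precisely what step (3) of Lemma \ref{lem_UKV_fundamental_system_entourage} actually invokes.
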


We define a uniform structure by constructing a filter basis on $\Pi\times\Pi$ that
satisfies the axiom of fundamental system of entourages (\cite[II.2,\S 1,1.]{MR1726779}).
\begin{lem}\label{lem_UKV_fundamental_system_entourage}
       The set 
       \begin{align}
	     \{\calU_{K,V}\mid K\in\Cpt(X), V\in\mathscr{V}\}
             \label{fundamental_system_entourage}
       \end{align}
        satisfies the axiom of fundamental system of entourages.
\end{lem}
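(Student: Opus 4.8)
The plan is to verify directly that the family $\mathcal{B} = \{\calU_{K,V}\mid K\in\Cpt(X), V\in\mathscr{V}\}$ satisfies the three axioms of a fundamental system of entourages (in the sense of \cite[II.2,\S 1,1.]{MR1726779}): (UF$_1$) $\mathcal{B}$ is a filter basis consisting of subsets of $\Pi\times\Pi$ each containing the diagonal $\Delta$; (UF$_2$) for each $U\in\mathcal{B}$ there is $V\in\mathcal{B}$ with $V^{-1}\subset U$ (here the ``$V$'' is a member of the family, not a neighborhood in $\mathscr{V}$ — I will rename to avoid clash, say $W\in\mathcal B$); (UF$_3$) for each $U\in\mathcal{B}$ there is $W\in\mathcal{B}$ with $W\circ W\subset U$. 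I would organize the proof as three short lemma-internal steps matching these axioms, then conclude.

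First, for the diagonal and filter-basis properties: each $\calU_{K,V}$ contains $\Delta$ because $e\in V$ (as $V\in\mathscr{V}$ is a neighborhood of $e$) and $(\calP\sci K = (e\calP)\sci K)$. For the filter-basis property, given $\calU_{K_1,V_1}$ and $\calU_{K_2,V_2}$, set $K=K_1\cup K_2\in\Cpt(X)$ and $W=V_1\cap V_2$; since the intersection of two compact neighborhoods of $e$ is again a compact neighborhood of $e$, $W\in\mathscr{V}$, and by Lemma \ref{lem_for_UKV_fund_sys_entourage} (applied twice) $\calU_{K,W}\subset\calU_{K_1,V_1}\cap\calU_{K_2,V_2}$. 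This gives (UF$_1$).

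For the symmetry axiom (UF$_2$), given $\calU_{K,V}$ I would produce a member contained in its inverse. The natural candidate is $\calU_{K',W}$ where $W = V^{-1}\cap V$ — which lies in $\mathscr{V}$ because inversion is a homeomorphism of $\Gamma$ carrying compact neighborhoods of $e$ to compact neighborhoods of $e$ — and $K'$ is a compact set large enough to control the distortion caused by replacing $\gamma$ with $\gamma^{-1}$. Concretely, if $(\calP,\calQ)\in\calU_{K',W}$, witnessed by $\gamma\in W$ with $\calP\sci K' = (\gamma\calQ)\sci K'$, then applying $\gamma^{-1}$ and using equivariance of the cutting-off operation together with axiom (1) of abstract pattern spaces yields $(\gamma^{-1}\calP)\sci(\gamma^{-1}K') = \calQ\sci(\gamma^{-1}K')$; choosing $K'$ so that $\gamma^{-1}K'\supset K$ for all $\gamma\in W$ (possible since $W$ is compact, the action is jointly continuous by isometries, so $\bigcup_{\gamma\in W}\gamma K$ is bounded hence has compact closure, and one takes $K' \supset \overline{\bigcup_{\gamma\in W}\gamma K}$) and then cutting both sides down to $K$ gives $(\gamma^{-1}\calP)\sci K = \calQ\sci K$ with $\gamma^{-1}\in W$, i.e.\ $(\calQ,\calP)\in\calU_{K,V}$. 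Thus $\calU_{K',W}^{-1}\subset\calU_{K,V}$.

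For the triangle axiom (UF$_3$), given $\calU_{K,V}$ I would choose $W\in\mathscr{V}$ with $WW\subset V$ (such $W$ exists by continuity of multiplication and local compactness of $\Gamma$, intersecting with a compact neighborhood) and a compact $K''$ large enough that $\bigcup_{\gamma\in W}\gamma^{-1}K \subset K''$, then claim $\calU_{K'',W}\circ\calU_{K'',W}\subset\calU_{K,V}$. Indeed if $\calP\sci K'' = (\gamma\calR)\sci K''$ with $\gamma\in W$ and $\calR\sci K'' = (\eta\calQ)\sci K''$ with $\eta\in W$, then cutting the first equation to the set $K$ and the second (after applying $\gamma$ and using equivariance) to $\gamma K$, one chains them — using $\gamma K\subset K''$ which follows from the choice of $K''$ via the inverse computation above, or more directly by enlarging $K''$ to also contain $\bigcup_{\gamma\in W}\gamma K$ — to get $\calP\sci K = (\gamma\eta\calQ)\sci K$ with $\gamma\eta\in WW\subset V$, so $(\calP,\calQ)\in\calU_{K,V}$. \textbf{The main obstacle} is the bookkeeping in the last two steps: one must be careful that the compact set one cuts down to is simultaneously large enough to survive translation by every group element in the relevant (compact) neighborhood and that all applications of equivariance and of axiom (1) $(\calP\sci C_1)\sci C_2 = \calP\sci(C_1\cap C_2)$ are lined up correctly; properness of the metrics and joint continuity of the action are exactly what guarantee the needed enlarged compacta exist, so these hypotheses from the Setting must be invoked explicitly. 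Once the correct $K', K''$ and $W$ are identified the inclusions are routine. Finally, having verified (UF$_1$)–(UF$_3$), I conclude that $\mathcal{B}$ is a fundamental system of entourages, completing the proof.
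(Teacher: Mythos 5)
Your proposal is correct and follows essentially the same route as the paper: a direct verification of the diagonal, filter-basis, symmetry and composition axioms, with the key device in the last two being to enlarge the compact set to one containing all translates of $K$ by elements of the relevant compact neighborhood of $e$ (your $K'\supset WK$ and $K''\supset W^{-1}K\cup K$ play the role of the paper's $V^{-1}K$ and $(V^{-1}K)\cup K$). The only cosmetic difference is that you symmetrize the neighborhood via $W=V\cap V^{-1}$ where the paper simply uses $V^{-1}$ as the witness set; both are fine.
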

\begin{proof}
       (1) For any $K\in\Cpt(X)$, $V\in\mathscr{V}$, we show
         \begin{align*}
	      \{(\calP,\calP)\mid\calP\in\Pi\}\subset\calU_{K,V}.
	 \end{align*}
         This is clear since if $\calP\in\Pi$, we have
          $\calP\sci K=\calP\sci K$.

       (2) For any $K$ and $V$, we show there are $K'$ and $V'$ such that
        \begin{align*}
	    \calU_{K',V'}\subset\calU_{K,V}^{-1}=\{(\calQ,\calP)\mid (\calP,\calQ)\in
	 \calU_{K,V}\}.
	\end{align*}
         Take $(\calP,\calQ)\in\calU_{V^{-1}K,V^{-1}}$.
           There is $\gamma\in V$ such that
          \begin{align*}
	       \calP\sci V^{-1}K=(\gamma^{-1}\calQ)\sci V^{-1}K.
	  \end{align*}
           Multiplying by $\gamma$ both sides we have
           \begin{align*}
	        (\gamma\calP)\sci\gamma V^{-1}K=\calQ\sci\gamma V^{-1}K, 
	   \end{align*}
           and so 
          \begin{align*}
	        (\gamma\calP)\sci K&=(\gamma\calP)\sci\gamma V^{-1}K\sci K\\
                                   &=\calQ\sci \gamma V^{-1}K\sci K\\
                                  &=\calQ\sci K.
	  \end{align*}
          We have $(\calQ,\calP)\in\calU_{K,V}$ and so 
          $\calU_{V^{-1}K,V^{-1}}^{-1}\subset\calU_{K,V}$.

          (3) For each $K_1,K_2\in\Cpt(X)$ and $V_1,V_2\in\mathscr{V}$, we show
        there are $K_3\in\Cpt(X)$ and $V_3\in\mathscr{V}$ such that
        \begin{align*}
	       \calU_{K_3,V_3}\subset\calU_{K_1,V_1}\cap\calU_{K_2,V_2}.
	\end{align*} 
 By Lemma \ref{lem_for_UKV_fund_sys_entourage}, for $K_1,K_2\in\Cpt(X)$
          and $V_1,V_2\in\mathscr{V}$, we have
          \begin{align*}
            \calU_{K_1\cup K_2,V_1\cap V_2}\subset\calU_{K_1,V_1}\cap \calU_{K_2,V_2}.
	  \end{align*}

          (4)Take $K\in\Cpt(X)$ and $V\in\mathscr{V}$ arbitrarily.
         We show there are $K'$ and $V'$ such that
         \begin{align*}
	        \calU_{K',V'}^2=\{(\calP,\calR)\mid\text{there is $\calQ$ with
	        $(\calP,\calQ),(\calQ,\calR)\in\calU_{K',V'}$}\}\subset\calU_{K,V}.
	 \end{align*}
          Set $K_1=(V^{-1}K)\cup K$ and take $V_1\in\mathscr{V}$ such that
           $V_1V_1\subset V$. Note that $V_1\subset V$.
          If $(\calP_1,\calP_2),(\calP_2,\calP_3)\in\calU_{K_1,V_1}$, then there are
          $\gamma_1$ and $\gamma_2$ in $V_1$ such that
          $\calP_1\sci K_1=(\gamma_1\calP_2)\sci K_1$ and
          $\calP_2\sci K_1=(\gamma_2\calP_3)\sci K_1$. We have
          \begin{align*}
	        (\gamma_1\gamma_2\calP_3)\sci K&=
                      \gamma_1((\gamma_2\calP_3)\sci K_1)\sci K\\
                       &=\gamma_1(\calP_2\sci K_1)\sci K\\
                       &=(\gamma_1\calP_2)\sci K \\
                      &=((\gamma_1\calP_2)\sci K_1)\sci K\\
                      &=(\calP_1\sci K_1)\sci K\\
                     &=\calP_1\sci K.
	  \end{align*}
          Thus $(\calP_1,\calP_3)\in\calU_{K,V}$.
         We have proved $\calU_{K_1,V_1}^2\subset\calU_{K,V}$.
\end{proof}

\begin{defi}
      Let $\mathfrak{U}$ be the set of all entourages generated by
      (\ref{fundamental_system_entourage}). That is, $\mathfrak{U}$ is the set of all
 subsets $\calU$ of $\Pi\times\Pi$ such that there are $K\in\Cpt(X)$ and $V\in\mathscr{V}$
     with $\calU\supset\calU_{K,V}$.
      The uniform structure defined by $\mathfrak{U}$ is called the local matching
      uniform structure. The topology defined by this uniform structure, that is, the
       topology in which the set
        \begin{align*}
	      \{\calU_{K,V}(\calP)\mid K\in\Cpt(X), V\in\mathscr{V}\}
	\end{align*}
         is a fundamental system of neighborhoods for $\calP\in\Pi$,
         is called the local matching topology. Here,
        \begin{align*}
	     \calU_{K,V}(\calP)=\{\calQ\in\Pi\mid(\calP,\calQ)\in\calU_{K,V}\}.
	\end{align*}
\end{defi}

\subsection{Hausdorff property of local matching topology}
\label{subsection_hausdorff_property}
Next we give a sufficient condition for the local matching topology to be Hausdorff in
Proposition \ref{prop_sufficint_condition_hausdorff}.

\begin{defi}
      Suppose $\Pi$ admits a unique zero element $0$.
      An abstract pattern $\calP\in\Pi$ is called an atom if $\supp\calP$ is
     compact and
     \begin{align*}
        \text{$ \calQ\in\Pi$ and $\calQ\leqq\calP\Rightarrow\calP=\calQ$ or $\calQ=0$.}
     \end{align*}
      For $\calP\in\Pi$ set
      \begin{align*}
           A(\calP)=\{\text{$\calQ\colon$ atom}\mid \calQ\leqq\calP\}.
      \end{align*}
      A subset $\Sigma\subset\Pi$ is said to be atomistic if
      for any $\calP\in\Pi$ we have $\calP=\bigvee A(\calP)$.

      A subset $\Sigma\subset\Pi$ is said to have limit inclusion property
      if the following condition is satisfied:
      \begin{align*}
           \text{for any $\calP\in\Sigma$ and an atom $\calQ\in\Pi$, if for any
           $V\in\mathscr{V}$}\\
          \text{there is $\gamma_V\in V$
            such that $\gamma_V\calQ\leqq\calP$, we have $\calQ\leqq\calP$.}
      \end{align*}
\end{defi}

\begin{prop}\label{prop_sufficint_condition_hausdorff}
      Suppose $\Pi$ admits a unique zero element $0$.
      Let $\Sigma$ be a nonempty subset of $\Pi$ which is atomistic and
       has limit inclusion property.
       Then the local matching topology on $\Sigma$ is Hausdorff.
\end{prop}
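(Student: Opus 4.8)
The plan is to show that the local matching topology on $\Sigma$ is Hausdorff by verifying that the intersection of all entourages $\calU_{K,V}$ equals the diagonal. Since a uniform space is Hausdorff precisely when $\bigcap_{K,V}\calU_{K,V}=\Delta_\Sigma$, it suffices to show: if $(\calP,\calQ)\in\calU_{K,V}$ for all $K\in\Cpt(X)$ and $V\in\mathscr{V}$, then $\calP=\calQ$. So I would fix such a pair $(\calP,\calQ)$ and try to prove $A(\calP)=A(\calQ)$; then, because $\Sigma$ is atomistic, $\calP=\bigvee A(\calP)=\bigvee A(\calQ)=\calQ$.

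First I would take an arbitrary atom $\calR\leqq\calP$ and aim to show $\calR\leqq\calQ$; by symmetry this gives $A(\calP)=A(\calQ)$. Set $K_0=\supp\calR$, which is compact since $\calR$ is an atom. For each $V\in\mathscr V$, choose a slightly enlarged compact set $K_V\supset (V^{-1})K_0\cup K_0$ (we may take $K_V=V^{-1}K_0\cup K_0$, using that $V$ is compact and the action is jointly continuous, so $V^{-1}K_0$ is compact). By hypothesis $(\calP,\calQ)\in\calU_{K_V,V}$, so there is $\gamma_V\in V$ with $\calP\sci K_V=(\gamma_V\calQ)\sci K_V$. Now I want to transfer the inclusion $\calR\leqq\calP$ through this matching. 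The key computation: since $\supp\calR=K_0\subset K_V$, Lemma \ref{lem_order_abstract pattern_space} gives $\calP\sci K_0=(\calP\sci K_V)\sci K_0=((\gamma_V\calQ)\sci K_V)\sci K_0=(\gamma_V\calQ)\sci K_0$. Because $\calR\leqq\calP$ means $\calP\sci K_0=\calR$, we obtain $\calR=(\gamma_V\calQ)\sci K_0$, hence $\calR\leqq\gamma_V\calQ$ by definition of the order (the support of $\calR$ is $K_0$, and cutting $\gamma_V\calQ$ by $K_0$ recovers $\calR$). Applying $\gamma_V^{-1}$ and using equivariance of the cutting-off operation, $\gamma_V^{-1}\calR\leqq\calQ$, i.e.\ (after relabeling) for every $V\in\mathscr V$ there is $\eta_V:=\gamma_V^{-1}\in V^{-1}$ with $\eta_V\calR\leqq\calQ$.

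Here I need a small care: the limit inclusion property is phrased with $\gamma_V\in V$, whereas I have produced $\eta_V\in V^{-1}$. Since $\mathscr V$ consists of compact neighborhoods of $e$ and $V\mapsto V^{-1}$ permutes such neighborhoods (inversion is a homeomorphism of $\Gamma$ fixing $e$), for any $V\in\mathscr V$ I may first replace it by $V\cap V^{-1}\in\mathscr V$, which is symmetric; running the argument above with this symmetric neighborhood yields $\eta_V\calR\leqq\calQ$ with $\eta_V\in V$. Thus the hypothesis of the limit inclusion property is met for $\calQ\in\Sigma$ and the atom $\calR$, so $\calR\leqq\calQ$, as desired. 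Running the symmetric argument with the roles of $\calP$ and $\calQ$ exchanged (here using that $\calU_{K,V}^{-1}$ is controlled, as established in the proof of Lemma \ref{lem_UKV_fundamental_system_entourage}, so that $(\calP,\calQ)\in\calU_{K,V}$ for all $K,V$ forces $(\calQ,\calP)\in\calU_{K,V}$ for all $K,V$ too) gives $A(\calQ)\subset A(\calP)$, hence $A(\calP)=A(\calQ)$ and $\calP=\calQ$.

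The main obstacle I anticipate is bookkeeping with the neighborhoods and sides: making sure the enlargement $K\rightsquigarrow V^{-1}K\cup K$ is chosen so that the cutting-off identity $\calP\sci K_0=(\gamma_V\calQ)\sci K_0$ genuinely follows from $\calP\sci K_V=(\gamma_V\calQ)\sci K_V$ (this is just axiom (1) of abstract pattern spaces since $K_0\subset K_V$, so in fact the enlargement is not even needed for this particular step — $K_V=K_0$ suffices here, the enlargement being the kind of precaution used in Lemma \ref{lem_UKV_fundamental_system_entourage} and I would drop it), and handling the $V$ versus $V^{-1}$ discrepancy cleanly via symmetrization. Everything else is a direct application of: Definition \ref{def_order_abstract pattern_space} of the order, Lemma \ref{lem_order_abstract pattern_space}, equivariance of the cutting-off operation, compactness of $V^{-1}K_0$ (from joint continuity and properness), the atomistic hypothesis, and the limit inclusion hypothesis.
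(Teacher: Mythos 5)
Your proposal is correct and takes essentially the same route as the paper's proof: fix a pair lying in every $\calU_{K,V}$, take an atom $\calR\leqq\calP$ with $K=\supp\calR$, use the matching to get $\gamma_V^{-1}\calR\leqq\calQ$, invoke the limit inclusion property to conclude $\calR\leqq\calQ$, and finish with atomisticity. Your extra care about $V$ versus $V^{-1}$ (symmetrizing to $V\cap V^{-1}$) and phrasing the conclusion as $A(\calP)=A(\calQ)$ rather than $\calP\leqq\calQ$ and $\calQ\leqq\calP$ are only cosmetic differences.
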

\begin{proof}
      Take $\calP,\calQ\in\Sigma$ and suppose $(\calP,\calQ)\in\mathcal{U}_{K,V}$
      for any $K\in\Cpt(X)$ and $V\in\mathscr{V}$.
      We show $\calP=\calQ$.
      Take $\calR\in A(\calP)$. Set $K=\supp\calR$. For any $V\in\mathscr{V}$
      there is $\gamma_V\in V$ such that
      \begin{align*}
            \calP\sci K=(\gamma_V^{-1}\calQ)\sci K.
      \end{align*}
      This implies that
       \begin{align*}
	     \gamma_V\calR\leqq\calQ,
       \end{align*}
      and so by limit inclusion property, we have
       \begin{align*}
	     \calR\leqq\calQ.
       \end{align*}
       Since $\Sigma$ is atomistic, we have $\calP\leqq\calQ$.
       The converse is proved in the same way and we have
      $\calQ\leqq\calP$, and so $\calP=\calQ$.
\end{proof}

\begin{cor}\label{102557_9Aug18}
       Under the same assumption on $\Pi$ and $\Sigma$, the local matching topology
      on $\Sigma$ is metrizable.
\end{cor}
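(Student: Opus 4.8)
The plan is to invoke the standard metrization theorem for uniform structures: a uniform space is (pseudo)metrizable if and only if its uniformity admits a countable fundamental system of entourages, and it is metrizable precisely when, in addition, the uniformity is Hausdorff (equivalently, the intersection of all entourages is the diagonal); see \cite[IX, \S 2, no.~4]{MR1726779}. By Proposition \ref{prop_sufficint_condition_hausdorff} the local matching topology on $\Sigma$ is already Hausdorff, and since the uniformity on a subshift is the restriction of the one on $\Pi$, it only remains to exhibit a countable fundamental system of entourages for the local matching uniform structure restricted to $\Sigma$.

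First I would produce countable cofinal families in each of the two index sets. Since $X$ is a proper metric space, the closed balls $B(x_0,n)$ for $n\in\posint$ are compact, and every $K\in\Cpt(X)$ is contained in some $B(x_0,n)$ (as $K$ is bounded); hence $\{B(x_0,n)\mid n\in\posint\}$ is cofinal in $\Cpt(X)$ under inclusion. Similarly, $\Gamma$ with its proper left-invariant metric has compact balls $B(e,1/n)$, and these form a neighborhood basis of $e$ consisting of elements of $\mathscr{V}$; moreover any $V\in\mathscr{V}$ contains some $B(e,1/n)$. Then by Lemma \ref{lem_for_UKV_fund_sys_entourage}, given any $K\in\Cpt(X)$ and $V\in\mathscr{V}$, choosing $n$ with $K\subset B(x_0,n)$ and $B(e,1/n)\subset V$ yields $\calU_{B(x_0,n),B(e,1/n)}\subset\calU_{K,B(e,1/n)}\subset\calU_{K,V}$ (the last inclusion because shrinking $V$ shrinks $\calU_{K,V}$). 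Therefore the countable family $\{\calU_{B(x_0,n),B(e,1/n)}\mid n\in\posint\}$, intersected with $\Sigma\times\Sigma$, is a fundamental system of entourages for the induced uniform structure on $\Sigma$.

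Combining these two facts — countable fundamental system of entourages plus the Hausdorff property from Proposition \ref{prop_sufficint_condition_hausdorff} — the metrization theorem gives that the local matching topology on $\Sigma$ is metrizable. I do not expect a genuine obstacle here: the only point requiring a little care is matching the direction of the containments in Lemma \ref{lem_for_UKV_fund_sys_entourage} (larger $K$ and smaller $V$ give smaller entourages) when verifying cofinality, and recalling that the uniform structure on a subspace is by definition the trace of the ambient one, so that Hausdorffness and the countable basis transfer automatically to $\Sigma$.
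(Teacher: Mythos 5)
Your proposal is correct and follows essentially the same route as the paper: it combines the Hausdorff property from Proposition \ref{prop_sufficint_condition_hausdorff} with the metrization theorem for uniform spaces, after exhibiting the countable fundamental system of entourages $\{\calU_{B(x_0,n),B(e,1/n)}\}$ (the paper uses $K_n=B(x_0,R_n)$, $V_n=B(e,r_n)$ with $R_n\nearrow\infty$, $r_n\searrow 0$). The only difference is that you spell out the cofinality verification that the paper leaves implicit.
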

\begin{proof}
       This follows from  Proposition \ref{prop_sufficint_condition_hausdorff},
 \cite[IX, \S 2]{MR1726872} and
 the fact that $\{\calU_{K_n,V_n}\mid n=1,2,\ldots\}$, where
        $K_n=B(x_0,R_n), V_n=B(e,r_n), R_n\nearrow\infty$ and $r_n\searrow 0$, forms a
       fundamental system of entourages.
\end{proof}

We then give examples of sets of abstract patterns on which the local matching topology
is Hausdorff, by checking the conditions in Proposition \ref{prop_sufficint_condition_hausdorff}.
\begin{lem}\label{lem_Cb_atomistic_LIP}
      Let $Y$ be a non-empty topological space and $y_0$ be an element of $Y$.
      Take a group homomorphism $\phi\colon\Gamma\rightarrow\Homeo(Y)$ which is
       continuous with respect to the compact-open topology and such that
       $\phi(\gamma)y_0=y_0$ for each $\gamma\in\Gamma$.
      Then $C_b(X,Y,y_0)=\{f\in\Map_{\phi}(X,Y,y_0)\mid \text{continuous and bounded}\}$
     is atomistic and has limit inclusion property
      as a subset of the abstract pattern space $\Map_{\phi}(X,Y,y_0)$
     (Definition \ref{ex_map_rho}).
\end{lem}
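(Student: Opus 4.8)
I need to verify two things for $\Sigma = C_b(X,Y,y_0)$ sitting inside $\Pi = \Map_\phi(X,Y,y_0)$: that it is atomistic, and that it has the limit inclusion property. The first task is to identify the atoms. An abstract pattern $\calQ \in \Map_\phi(X,Y,y_0)$ has compact support iff $\overline{\{x : \calQ(x) \neq y_0\}}$ is compact, and the cutting-off operation shows $\calQ \leqq \calP$ means $\calP$ and $\calQ$ agree on $\supp\calQ$ and $\calQ = y_0$ off $\supp\calQ$. So the minimal nonzero such patterns should be the ones supported on a single point: for $x \in X$ and $v \in Y \setminus \{y_0\}$, let $\delta_{x,v}$ be the map sending $x \mapsto v$ and everything else to $y_0$. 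I would first check these are atoms (any $\calQ \leqq \delta_{x,v}$ is supported in $\{x\}$, hence is either $0$ or equals $\delta_{x,v}$ — a small point-set argument using that $\supp \calQ$ is closed and contained in $\{x\}$), and conversely that every atom is of this form. Then for $f \in C_b(X,Y,y_0)$, the set $A(f)$ is exactly $\{\delta_{x,f(x)} : f(x) \neq y_0\}$, and I must show $f = \bigvee A(f)$. Using Lemma~\ref{lem_support_supremum}, the support of the supremum is $\overline{\bigcup \{x\} : f(x)\neq y_0} = \supp f$, and pointwise the supremum agrees with $f$ on $\{x : f(x) \neq y_0\}$ and is $y_0$ elsewhere, hence equals $f$; I also need that $A(f)$ is pairwise compatible and that the supremum lands in $\Pi$ (it does, trivially, since $\Pi = \Map(X,Y)$ as a set) — note I do \emph{not} need the supremum to land in $\Sigma$, since the definition of atomistic only requires $\calP = \bigvee A(\calP)$ in $\Pi$. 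Checking the glueability/supremum computation for this family of point-supported patterns is a routine verification against the formula for the cutting-off operation on $\Map_\phi$.

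**The limit inclusion property.** This is where I expect the real content and the continuity hypotheses on $\phi$ to enter. I must show: if $f \in C_b(X,Y,y_0)$, $\calQ = \delta_{x,v}$ is an atom, and for every $V \in \mathscr{V}$ there is $\gamma_V \in V$ with $\gamma_V \calQ \leqq f$, then $\calQ \leqq f$. Now $\gamma_V\calQ = \gamma_V\delta_{x,v}$ is again a point-supported pattern: unwinding the $\Gamma$-action on $\Map_\phi$, $(\gamma_V\delta_{x,v})(z) = \phi(\gamma_V)(\delta_{x,v}(\gamma_V^{-1}z))$, which is $\phi(\gamma_V)(v) \neq y_0$ when $z = \gamma_V x$ and $y_0$ otherwise; so $\gamma_V\delta_{x,v} = \delta_{\gamma_V x,\, \phi(\gamma_V)v}$. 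The hypothesis $\gamma_V\calQ \leqq f$ then says $f(\gamma_V x) = \phi(\gamma_V)(v)$. I want to pass to the limit $\gamma_V \to e$. Choose a decreasing sequence of compact neighborhoods $V_n$ shrinking to $\{e\}$ (possible by properness of $\rho_\Gamma$); then $\gamma_{V_n} \to e$ in $\Gamma$. By joint continuity of $\Gamma \curvearrowright X$, $\gamma_{V_n} x \to x$; by continuity of $f$, $f(\gamma_{V_n}x) \to f(x)$; and by continuity of $\phi$ in the compact-open topology together with $\phi(e) = \Id$, $\phi(\gamma_{V_n})(v) \to \phi(e)(v) = v$. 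Hence $f(x) = \lim f(\gamma_{V_n} x) = \lim \phi(\gamma_{V_n})(v) = v \neq y_0$, which says precisely $\delta_{x,v} \leqq f$, i.e. $\calQ \leqq f$.

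**Where the difficulty lies.** The genuinely delicate step is extracting $\phi(\gamma_{V_n})(v) \to v$ from continuity of $\phi : \Gamma \to \Homeo(Y)$ in the compact-open topology — I need that evaluation at a fixed point $v$ is continuous, which holds because $Y$ is just a topological space and the compact-open topology makes evaluation $\Homeo(Y) \times Y \to Y$ continuous on the relevant locally-compact-ish pieces; strictly, continuity of the orbit map $\gamma \mapsto \phi(\gamma)v$ follows from continuity of the action map $\Gamma \times Y \to Y$ induced by $\phi$, and I should cite or note that compact-open continuity of $\phi$ gives this. A secondary subtlety is making sure the atom classification is exhaustive — that there is no atom with support a single point but ``extra structure'' — which is immediate here because $\Map_\phi$ has no labels or internal degrees of freedom beyond the value in $Y$. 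Everything else (pairwise compatibility of $A(f)$, the support identity, the pointwise supremum formula) is bookkeeping against the definitions in Section~\ref{section_abstract pattern_space}.
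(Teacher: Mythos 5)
Your proposal is correct and follows essentially the same route as the paper: identify the atoms as the point-supported maps $\varphi_x^y$, observe $A(f)=\{\varphi_x^{f(x)}\mid f(x)\neq y_0\}$ with $f=\bigvee A(f)$, and for limit inclusion compute $f(\gamma_V x)=\phi(\gamma_V)(y)$ and pass to the limit using continuity of $f$, of the action, and of $\phi$. Your extra care about why evaluation $\gamma\mapsto\phi(\gamma)(v)$ is continuous in the compact-open topology is a point the paper glosses over, but the argument is the same.
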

\begin{proof}
      For each $x\in X$ and $y\in Y\setminus\{y_0\}$, the function defined by
      \begin{align*}
           \varphi_x^y(x')=
            \begin{cases}
	         y &\text{if $x=x'$}\\
	         y_0 &\text{if $x\neq x'$}
	    \end{cases}
      \end{align*}
      is an atom of $\Map_{\phi}(X,Y,y_0)$.
      Any atom of $\Map_{\phi}(X,Y,y_0)$ is of this form.
      For $f\in C_b(X,Y,y_0)$, we have
       \begin{align*}
	   A(f)=\{\varphi_x^{f(x)}\mid\text{$x\in X$ and $f(x)\neq y_0$}\}.
       \end{align*}
       We see $f=\bigvee A(f)$. We have proved that $C_b(X,Y,y_0)$ is
      atomistic.

      Next we show $C_b(X,Y,y_0)$ has limit inclusion property.
      Take any $x\in X$ and $y\in Y\setminus\{y_0\}$, and assume that for any
      $V\in\mathscr{V}$ there is $\gamma_V\in V$ such that
      $\gamma_V\varphi_x^{y}\leqq f$.
       Since $\supp\gamma_V\varphi_x^y=\{\gamma_Vx\}$, we have
       \begin{align*}
	    f(\gamma_Vx)=(\gamma_V\varphi_x^y)(\gamma_Vx)=\phi(\gamma_V)(\varphi_x^y(x))
	    =\phi(\gamma_V)(y).
       \end{align*}
       Since $f$ is continuous and the action $\Gamma\curvearrowright X$ is continuous,
      \begin{align*}
            f(x)=\lim_Vf(\gamma_V x)=\lim_V\phi(\gamma_V)(y)=y,
      \end{align*}
      and so $f\geqq\varphi_x^y$.
      We have shown $C_b(X,Y,y_0)$ has limit inclusion property.
\end{proof}

\begin{cor}
     The relative topology of the local matching topology on $C_b(X,Y,y_0)$
     is Hausdorff.
\end{cor}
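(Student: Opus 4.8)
The plan is to obtain this as an immediate consequence of Proposition \ref{prop_sufficint_condition_hausdorff}, applied with $\Pi=\Map_{\phi}(X,Y,y_0)$ and $\Sigma=C_b(X,Y,y_0)$, once the two hypotheses of that proposition that are not yet in hand are checked off.

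First I would verify that $\Map_{\phi}(X,Y,y_0)$ admits a unique zero element. By Example \ref{example_map}, the support of $f\in\Map_{\phi}(X,Y,y_0)$ is $\overline{\{x\in X\mid f(x)\neq y_0\}}$, and this set is empty exactly when $f(x)=y_0$ for every $x\in X$, i.e.\ when $f$ is the constant map with value $y_0$. Hence there is precisely one zero element, namely this constant map; note it is indeed fixed by the $\Gamma$-action since $\phi(\gamma)y_0=y_0$ for all $\gamma$, but for the present purpose only its uniqueness matters. Second I would note that $\Sigma=C_b(X,Y,y_0)$ is nonempty, again because the constant map with value $y_0$ is continuous and bounded, hence lies in $C_b(X,Y,y_0)$.

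With these two points settled, Lemma \ref{lem_Cb_atomistic_LIP} supplies exactly the remaining hypotheses of Proposition \ref{prop_sufficint_condition_hausdorff}: it states that $C_b(X,Y,y_0)$ is atomistic and has limit inclusion property as a subset of $\Map_{\phi}(X,Y,y_0)$. Therefore the proposition applies and gives that the local matching topology on $C_b(X,Y,y_0)$ is Hausdorff; since the local matching uniform structure and topology are defined on all of $\Pi$ and "the local matching topology on a subset $\Sigma$" means the one induced by the restricted entourages (as used in the proof of Proposition \ref{prop_sufficint_condition_hausdorff}), this is precisely the relative topology referred to in the statement.

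There is essentially no obstacle: all the substantive work — producing the atoms $\varphi_x^y$, showing $f=\bigvee A(f)$, and deducing limit inclusion from the continuity of $f$ together with joint continuity of the $\Gamma$-action — has already been done in Lemma \ref{lem_Cb_atomistic_LIP}. The only thing requiring a moment's care is the bookkeeping just described: confirming that the ambient abstract pattern space $\Map_{\phi}(X,Y,y_0)$ really has a \emph{unique} zero element, so that the standing hypothesis of Proposition \ref{prop_sufficint_condition_hausdorff} is met, and that $\Sigma$ is nonempty.
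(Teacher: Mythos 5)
Your proposal is correct and follows exactly the paper's route: the corollary is deduced by combining Proposition \ref{prop_sufficint_condition_hausdorff} with Lemma \ref{lem_Cb_atomistic_LIP}. The extra bookkeeping you supply (uniqueness of the zero element of $\Map_{\phi}(X,Y,y_0)$ and nonemptiness of $C_b(X,Y,y_0)$) is correct and merely makes explicit what the paper leaves implicit.
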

\begin{proof}
    Clear by Proposition \ref{prop_sufficint_condition_hausdorff}
    and Lemma \ref{lem_Cb_atomistic_LIP}.
\end{proof}

As the following lemma shows, the local matching topology on $\Map_{\phi}(X,Y,y_0)$
is not necessarily Hausdorff:
\begin{lem}
    On  $\Map (\mathbb{R},\mathbb{C},0)$, the local matching topology is not
    Hausdorff.
\end{lem}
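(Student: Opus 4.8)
The plan is to exhibit two distinct elements of $\Map(\mathbb{R},\mathbb{C},0)$ that cannot be separated by the local matching topology, i.e.\ a pair $(f,g)$ with $f\neq g$ but $(f,g)\in\calU_{K,V}$ for every $K\in\Cpt(\mathbb{R})$ and every $V\in\mathscr{V}$. Since $\Gamma$ here should be taken to be $\mathbb{R}$ acting on itself by translations (with the trivial action $\phi$ on the target $\mathbb{C}$, as in Example \ref{ex_map_rho} with $\phi$ trivial), being in $\calU_{K,V}$ means: there is a small translation $t\in V$ such that $f$ and the $t$-translate of $g$ agree on $K$. The idea is that a function supported at a single point can be ``hidden'' by an arbitrarily small translation, because after translating, its support leaves any fixed compact set $K$ only if the translation is nonzero, but the \emph{value} at a point can still be detected unless we are allowed to move it. So instead I would compare a function with a single nonzero spike against the zero function: let $g=0$ and let $f=\varphi_0^1$, the function with $f(0)=1$ and $f(x)=0$ otherwise (an atom, in the terminology of Lemma \ref{lem_Cb_atomistic_LIP}).

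The key computation is then the following. Fix $K\in\Cpt(\mathbb{R})$ and $V\in\mathscr{V}$; since $V$ is a neighborhood of $0$ in $\mathbb{R}$ it contains some $t\neq 0$ with $-t\notin K$ is not what I want — rather I want the translate of the spike to leave $K$. Concretely, $(t\cdot f)(x)=f(x-t)$, which is $1$ at $x=t$ and $0$ elsewhere, so $\supp(t\cdot f)=\{t\}$. Hence $(t\cdot f)\sci K$ is the zero function whenever $t\notin K$. Because $K$ is compact (hence bounded) and $V$ is a neighborhood of $0$, we may pick $t\in V$ with $|t|$ larger than $\sup\{|x|:x\in K\}$; then $(t\cdot f)\sci K=0=g\sci K$, so $(g,f)\in\calU_{K,V}$, and by symmetry (or by the fact that the fundamental system of entourages is symmetric up to the adjustments in the proof of Lemma \ref{lem_UKV_fundamental_system_entourage}) also $(f,g)\in\calU_{K,V}$. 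Since this holds for all $K$ and $V$, the points $f$ and $g$ have no disjoint neighborhoods; as $f\neq g$ (indeed $f(0)=1\neq 0=g(0)$), the topology is not Hausdorff.

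The step requiring the most care is making sure the two elements really are distinct in $\Map(\mathbb{R},\mathbb{C},0)$ and that the convention for $\Gamma$ and the action is the one under which the argument works; I would state explicitly at the start of the proof that we take $\Gamma=\mathbb{R}$ with the translation action and the trivial $\phi$, matching Example \ref{example_map} and Example \ref{ex_map_rho}. A secondary point is the symmetry: one should either invoke that $\calU_{K,V}^{-1}\supset\calU_{V^{-1}K,V^{-1}}$ from the proof of Lemma \ref{lem_UKV_fundamental_system_entourage} and note that for $\Gamma=\mathbb{R}$ abelian and $V$ symmetric neighborhoods one gets the clean statement, or simply run the same ``push the spike out of $K$'' computation directly for both orderings of the pair. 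I expect no genuine obstacle here — the lemma is essentially a sanity check showing the Hausdorff hypotheses in Proposition \ref{prop_sufficint_condition_hausdorff} (specifically the limit inclusion property, which $C_b$ has but arbitrary maps do not) cannot be dropped — so the ``hard part'' is only bookkeeping: confirming $\{t\}\cap K=\emptyset$ forces $(t\cdot f)\sci K$ to be the zero map, which is immediate from the definition of the cutting-off operation on $\Map(\mathbb{R},\mathbb{C},0)$ since $(t\cdot f)\sci K$ vanishes off $K$ and equals $t\cdot f$ on $K$, and $t\cdot f$ already vanishes on $K$.
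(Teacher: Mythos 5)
Your argument has a genuine quantifier error, and the pair you chose does not work. To prove non-Hausdorffness by this route you need $f\neq g$ with $(f,g)\in\calU_{K,V}$ for \emph{every} $K\in\Cpt(\mathbb{R})$ and \emph{every} $V\in\mathscr{V}$, and $\mathscr{V}$ consists of \emph{all} compact neighborhoods of $0$ in $\Gamma=\mathbb{R}$, including arbitrarily small ones. The witnessing translation must be taken from $V$, so you cannot ``pick $t\in V$ with $|t|$ larger than $\sup\{|x|:x\in K\}$'': for $V=[-1,1]$ and $K=[-2,2]$, every $t\in V$ lies in $K$, so $(t\cdot\varphi_0^1)\sci K=\varphi_t^1\neq 0=(t\cdot 0)\sci K$, and also $\varphi_0^1\sci K=\varphi_0^1\neq 0$. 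Hence $(\varphi_0^1,0)$ lies in neither $\calU_{K,V}$ nor $\calU_{K,V}^{-1}$, and in fact the spike and the zero function \emph{are} separated: choosing $K',V'$ with $\calU_{K',V'}^{2}\subset\calU_{K,V}$ as in part (4) of the proof of Lemma \ref{lem_UKV_fundamental_system_entourage}, the neighborhoods $\calU_{K',V'}(\varphi_0^1)$ and $\calU_{K',V'}(0)$ are disjoint. The intuition that a spike can be ``hidden'' by a small translation fails precisely because a small translation keeps the spike inside $K$.

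What is needed is a pair of distinct functions that agree on all of $\mathbb{R}$ after arbitrarily \emph{small} nonzero translations. The paper takes $f=1_{\mathbb{Q}}$ and $g=1_{\mathbb{Q}+a}$ with $a$ irrational: for any $V\in\mathscr{V}$ there is a rational $q$ with $t=q-a\in V$ and $t\neq 0$, and then $(t\cdot g)(x)=g(x-t)=1_{\mathbb{Q}+q}(x)=1_{\mathbb{Q}}(x)=f(x)$ for all $x$, so $(f,g)\in\calU_{K,V}$ for every $K$; yet $f\neq g$ since $a\notin\mathbb{Q}$. The density of $\mathbb{Q}$ supplies the arbitrarily small translations; your single-atom example has no such self-similarity. (Your closing remark about the role of the limit inclusion property is on target --- but the counterexample must be a function violating it in a translation-invariant way, not a lone spike.)
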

\begin{proof}
     Take characteristic functions
 $f=1_{\mathbb{Q}}$ and $g=1_{\mathbb{Q}+a}$, where $a$ is any irrational
    number.
    Then $(f,g)$ belongs to any entourage.
\end{proof}

We then prove on $\Patch(X)$ (Example \ref{example_patch}, Example \ref{ex_patch_Gamma_abstract pattern_sp}),
the local matching topology is Hausdorff.

\begin{lem}\label{lem_patch_atomistic_LIP}
      The abstract pattern space $\Patch(X)$
      over $(X,\Gamma)$ is atomistic and
      has limit inclusion property.
\end{lem}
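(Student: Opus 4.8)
The plan is to verify the two defining conditions, \emph{atomistic} and \emph{limit inclusion property}, directly, exactly as in Lemma \ref{lem_Cb_atomistic_LIP}. First I would identify the atoms of $\Patch(X)$ over $(X,\Gamma)$: for a single tile $T\subset X$, the patch $\{T\}$ has compact support $\overline{T}$, and if $\calQ\leqq\{T\}$ then $\calQ\subset\{T\}$ (using $\calP\geqq\calQ\iff\calP\supset\calQ$ from the worked example), so $\calQ$ is either $\emptyset=0$ or $\{T\}$. Moreover these are the only atoms: any patch with two or more tiles properly contains the nonzero, non-equal subpatch consisting of one of its tiles, hence is not an atom. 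Thus the atoms of $\Patch(X)$ are exactly the one-tile patches $\{T\}$ with $\overline{T}$ compact (equivalently $T$ bounded, which every tile is).

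Next I would check that $\Patch(X)$ is atomistic. For $\calP\in\Patch(X)$ we have $A(\calP)=\{\{T\}\mid T\in\calP\}$, since $\{T\}\leqq\calP$ iff $T\in\calP$. Because $\Patch(X)$ is glueable and any family of one-tile subpatches of a fixed patch is pairwise compatible and locally finite (local finiteness of $\{\{T\}\mid T\in\calP\}\sci B(x,r)$ follows from the fact that a patch has only finitely many tiles contained in any fixed ball — this is where one should be slightly careful, but it is immediate from the definition of a patch together with properness of the metric, or can simply be invoked from the glueability discussion), the supremum $\bigvee A(\calP)$ exists and, by the formula $\bigvee\Xi=\bigcup_{\calQ\in\Xi}\calQ$ for pairwise compatible families of patches, equals $\bigcup_{T\in\calP}\{T\}=\calP$. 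Hence $\calP=\bigvee A(\calP)$.

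Finally I would check the limit inclusion property. Suppose $\calP\in\Patch(X)$, $\{T\}$ is an atom, and for every $V\in\mathscr{V}$ there is $\gamma_V\in V$ with $\gamma_V\{T\}=\{\gamma_V T\}\leqq\calP$, i.e.\ $\gamma_V T\in\calP$. I want to conclude $T\in\calP$. The key point is that a patch is locally finite as a collection of tiles: fix a point $x_1\in T$ and a ball $B=B(x_1,\e)$ meeting $T$; for $\gamma_V$ close enough to $e$ (using joint continuity of the action and that the $V$'s shrink to $\{e\}$) the tile $\gamma_V T$ meets $B$, and only finitely many tiles of $\calP$ meet the compact set $\overline{B}\cup\overline{T}$. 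So along the net there are only finitely many possibilities for $\gamma_V T$; passing to a subnet, $\gamma_V T$ is a constant tile $S\in\calP$, so $\gamma_V T=S$ for cofinally many $V$, whence $\gamma_V$ stabilizes $T$ up to the fixed tile $S$ — more precisely $S=\gamma_V T\to T$ (since $\gamma_V\to e$ acts continuously, $\gamma_V T\to T$ in the Hausdorff sense, and a constant net has that constant as its limit), forcing $S=T$ and therefore $T\in\calP$, i.e.\ $\{T\}\leqq\calP$. The main obstacle, and the step deserving the most care, is precisely this finiteness-plus-continuity argument extracting $T\in\calP$ from the ``shrinking translate'' hypothesis; everything else is bookkeeping with the order $\geqq$ and the explicit description of suprema in $\Patch(X)$.
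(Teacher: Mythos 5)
Your identification of the atoms and your atomistic argument are essentially right, but both halves of your proof lean on the claim that a patch has only finitely many tiles contained in (or meeting) a fixed compact set, and under this paper's definition of a patch that claim is false: tiles are merely open, nonempty, bounded and pairwise disjoint, with no lower bound on their size, so for example $\{(1/(n+1),1/n)\mid n\geqq 1\}$ is a patch in $\mathbb{R}$ with infinitely many tiles inside $[0,1]$. In the atomistic part this is harmless, because you do not actually need local finiteness there: in $\Patch(X)$ the union of any pairwise compatible family is again a patch and is its supremum (this is the paper's worked example on glueability of $\Patch(X)$), so $\calP=\bigcup A(\calP)=\bigvee A(\calP)$ directly. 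But in the limit inclusion part the finiteness claim is load-bearing: the step ``only finitely many tiles of $\calP$ meet $\overline{B}\cup\overline{T}$, hence finitely many possibilities for $\gamma_V T$, hence a constant subnet'' collapses, and with it your extraction of the single tile $S$.

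The repair, which is the paper's route, needs no finiteness at all. Since $T$ is open and nonempty and the action is jointly continuous, there is $V_0\in\mathscr{V}$ so small that for any $V_1,V_2\subset V_0$ one has $\gamma_{V_1}T\cap\gamma_{V_2}T\neq\emptyset$ (pick $x\in T$; for $V_0$ small, $\gamma_{V_2}^{-1}\gamma_{V_1}x\in T$, so $\gamma_{V_1}x$ lies in both translates). Since $\gamma_{V_1}T$ and $\gamma_{V_2}T$ are both tiles of the single patch $\calP$, the disjoint-or-equal condition forces $\gamma_{V_1}T=\gamma_{V_2}T$; this yields your ``eventually constant'' tile $S=\gamma_{V_0}T$ for free. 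Then the two-inclusion continuity argument ($x\in T\Rightarrow\gamma_{V_1}^{-1}x\in T$ for small $V_1$, so $x\in\gamma_{V_1}T=S$, and conversely for $x\in S$) shows $S=T$, hence $T\in\calP$. Your closing appeal to Hausdorff convergence of $\gamma_V T$ should be replaced by this elementary argument, since convergence of open sets in the Hausdorff sense is not set up anywhere in the paper and is in any case unnecessary.
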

\begin{proof}
     Let $T$ be a tile. Then $\{T\}$ is an atom.
     Any atom in $\Patch(X)$ is of this form.
    For any patch $\calP\in\Patch(X)$, we have
    \begin{align*}
         A(\calP)=\{\{T\}\mid T\in\calP\},
    \end{align*}
    and so $\calP=\bigcup A(\calP)=\bigvee A(\calP)$.
     We have shown that $\Patch(X)$ is atomistic.

     To prove $\Patch(X)$ satisfies limit inclusion property,
     take $\calP\in\Patch(X)$ and a tile $T$, and assume for any $V\in\mathscr{V}$
     there is $\gamma_V\in V$ such that $\gamma_V\{T\}\leqq\calP$, that is,
     $\gamma_V T\in\calP$.
     We show $T\in\calP$.
     There is $V_0\in\mathscr{V}$ such that if $V_1,V_2\in\mathscr{V}$ and
      $V_j\subset V_0$ for each $j$, then $\gamma_{V_1}T\cap\gamma_{V_2}T\neq
      \emptyset$. Since $\gamma_{V_j}T$ is in a patch $\calP$ for each $j$,
      we see $\gamma_{V_1}T=\gamma_{V_2}T$.
      Now it suffices to show that $T=\gamma_{V_0}T$ since $\gamma_{V_0}T\in\calP$.
     If $x\in T$, then if $V_1\in\mathscr{V}$ is small enough we have
     $V_1\subset  V_0$ and $\gamma_{V_1}^{-1}x\in T$.
     Since $\gamma_{V_1}T=\gamma_{V_0}T$, we see $x\in\gamma_{V_0}T$.
    Conversely, if $x\in\gamma_{V_0}T$, then if  $V_1\in\mathscr{V}$ is small enough
    $\gamma_{V_1}x\in\gamma_{V_0}T=\gamma_{V_1}T$, and so $x\in T$.
    We have shown $T=\gamma_{V_0}T$.
\end{proof}

\begin{cor}\label{cor_patch_hausdorff}
     The local matching topology on $\Patch(X)$ is Hausdorff.
\end{cor}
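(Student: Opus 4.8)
The plan is to deduce the statement directly from Proposition \ref{prop_sufficint_condition_hausdorff}, applied with $\Pi=\Sigma=\Patch(X)$. Two hypotheses must be checked. First, $\Patch(X)$ must admit a unique zero element. The empty patch $\emptyset$ is indeed a patch, and $\supp\emptyset=\overline{\bigcup_{T\in\emptyset}T}=\emptyset$, so it is a zero element; conversely, if $\calP\in\Patch(X)$ satisfies $\supp\calP=\emptyset$, then $\calP$ can contain no tile, since every tile $T\in\calP$ is nonempty and $T\subset\supp\calP$, forcing $\calP=\emptyset$. (Alternatively one may invoke that $\Patch(X)$ is glueable, as shown in the example following Definition \ref{def_glueable_abstract pattern_space}, and apply Lemma \ref{uniqueness_zero_element}.) Second, $\Sigma=\Patch(X)$ must be nonempty, atomistic, and have the limit inclusion property: nonemptiness is part of the definition of an abstract pattern space, and the remaining two properties are exactly the content of Lemma \ref{lem_patch_atomistic_LIP}.

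Once these are in place, Proposition \ref{prop_sufficint_condition_hausdorff} yields immediately that the local matching topology on $\Patch(X)$ is Hausdorff. There is no genuine obstacle here: the substantive work has already been carried out in Lemma \ref{lem_patch_atomistic_LIP}, where the atoms are identified with the singletons $\{T\}$ of tiles, the equality $\calP=\bigvee A(\calP)$ is verified, and limit inclusion is established via a small-neighborhood overlap argument on tiles; the only additional point, the uniqueness of the zero element, is routine. Hence the proof reduces to assembling these two already-proved facts.
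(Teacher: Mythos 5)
Your proof is correct and follows the same route as the paper: the paper's proof of Corollary \ref{cor_patch_hausdorff} is exactly the one-line appeal to Proposition \ref{prop_sufficint_condition_hausdorff} together with Lemma \ref{lem_patch_atomistic_LIP}. Your additional verification that the empty patch is the unique zero element (since every tile is nonempty and contained in the support) is a correct and worthwhile filling-in of a hypothesis the paper leaves implicit.
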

\begin{proof}
      Clear by Proposition\ref{prop_sufficint_condition_hausdorff} and
      Lemma \ref{lem_patch_atomistic_LIP}.
\end{proof}

By a similar argument we obtain the following:

\begin{lem}
      The abstract pattern space $\Patch_L(X)$ (Example \ref{ex_patch_Gamma_abstract pattern_sp}),
      where $L$ is a non-empty set, is atomistic and has limit inclusion property.
\end{lem}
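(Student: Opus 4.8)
The plan is to mirror the proof of Lemma \ref{lem_patch_atomistic_LIP} almost verbatim, making the necessary bookkeeping adjustments for labels. First I would identify the atoms of $\Patch_L(X)$: if $(T,l)$ is an $L$-labeled tile, then the singleton $\{(T,l)\}$ has compact support $T$, and any labeled patch $\calQ\leqq\{(T,l)\}$ is either $\{(T,l)\}$ itself or the empty patch $0$, so it is an atom; conversely every atom arises this way, since an atom must be a single labeled tile with compact support (any labeled patch containing two distinct tiles properly majorizes a nonzero proper sub-patch). Then for any $\calP\in\Patch_L(X)$ we have $A(\calP)=\{\{(T,l)\}\mid (T,l)\in\calP\}$, and since the labeled-tile overlap condition makes this family pairwise compatible, $\calP=\bigcup A(\calP)=\bigvee A(\calP)$; thus $\Patch_L(X)$ is atomistic.

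Next I would verify limit inclusion property. Take $\calP\in\Patch_L(X)$ and a labeled tile $(T,l)$, and suppose for each $V\in\mathscr V$ there is $\gamma_V\in V$ with $\gamma_V\{(T,l)\}=\{(\gamma_V T,l)\}\leqq\calP$, i.e. $(\gamma_V T,l)\in\calP$. The goal is $(T,l)\in\calP$. Exactly as in Lemma \ref{lem_patch_atomistic_LIP}, the joint continuity of $\Gamma\curvearrowright X$ together with $T=\overline{T^\circ}$ (so $T^\circ\neq\emptyset$) gives a $V_0\in\mathscr V$ such that $\gamma_{V_1}T^\circ\cap\gamma_{V_2}T^\circ\neq\emptyset$ whenever $V_1,V_2\subset V_0$: pick $x\in T^\circ$, so $\gamma_{V}x\to x$ as $V\to\{e\}$, and for $V$ small $\gamma_V x$ lies in $T^\circ$ (since $T^\circ$ is open and $x\in T^\circ$), hence for $V_1,V_2$ small the interiors share the point near $x$ — I will write this out carefully using that $\gamma$ acts by homeomorphisms so $\gamma_{V}T^\circ=(\gamma_V T)^\circ$. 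Since $(\gamma_{V_1}T,l)$ and $(\gamma_{V_2}T,l)$ both lie in the labeled patch $\calP$ and their tiles have intersecting interiors, the labeled-patch axiom forces $\gamma_{V_1}T=\gamma_{V_2}T$ and the labels to agree (they are already both $l$). It then remains to show $T=\gamma_{V_0}T$, which follows by the same two-inclusion argument as in Lemma \ref{lem_patch_atomistic_LIP}: if $x\in T$ then for small $V_1\subset V_0$ we have $\gamma_{V_1}^{-1}x\in T$ hence $x\in\gamma_{V_1}T=\gamma_{V_0}T$, and conversely; combined with $(\gamma_{V_0}T,l)\in\calP$ this yields $(T,l)\in\calP$, so $\calP\geqq\{(T,l)\}$.

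Finally, both properties being established, $\Patch_L(X)$ is atomistic and has limit inclusion property; in particular Proposition \ref{prop_sufficint_condition_hausdorff} applies once one notes $\Patch_L(X)$ has a unique zero element (the empty labeled patch — uniqueness can be checked directly, or via glueability). The main obstacle, such as it is, is the topological step producing $V_0$: one must be slightly careful that $L$-labeled tiles are compact sets equal to the closure of their interior, so $T^\circ$ is nonempty and open, and that the equivariance $\gamma T^\circ=(\gamma T)^\circ$ (valid since $\gamma$ is a homeomorphism of $X$) lets the overlap be detected on interiors, which is exactly the form of the overlap condition for $L$-labeled patches. Everything else is a routine transcription of the unlabeled case, carrying the label $l$ passively along.
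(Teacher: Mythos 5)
Your proposal is exactly the argument the paper intends: the paper gives no separate proof for $\Patch_L(X)$ and simply says ``by a similar argument'' to Lemma \ref{lem_patch_atomistic_LIP}, and your transcription (atoms are singletons $\{(T,l)\}$, overlap detected on interiors via the labeled-patch axiom) is that argument. One small caution: in the final two-inclusion step the literal claim ``$x\in T$ and $V_1$ small implies $\gamma_{V_1}^{-1}x\in T$'' is only valid for \emph{interior} points, since labeled tiles are closed rather than open; you should run both inclusions on $T^\circ$ to get $T^\circ=(\gamma_{V_0}T)^\circ$ and then conclude $T=\overline{T^\circ}=\overline{(\gamma_{V_0}T)^\circ}=\gamma_{V_0}T$, which is the same interior-based care you already invoke for the overlap step.
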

\begin{cor}
      The local matching topology on $\Patch_L(X)$ is Hausdorff.
\end{cor}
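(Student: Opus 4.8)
\emph{Proof proposal.} The plan is to imitate the proof of Lemma~\ref{lem_patch_atomistic_LIP} for $\Patch(X)$; the only new ingredients are the bookkeeping of labels and the fact that an $L$-labeled tile is a closed set (satisfying $T=\overline{T^{\circ}}$) rather than an open one. First I would note that, exactly as in the unlabeled case, the order $\geqq$ on $\Patch_L(X)$ is ordinary inclusion of labeled patches, and the union of a pairwise compatible family of labeled patches is again a labeled patch, so such families have suprema given by union. Consequently the atoms of $\Patch_L(X)$ are precisely the singletons $\{(T,l)\}$ formed by a single $L$-labeled tile: such a singleton has compact support $T$ and is minimal among nonzero elements, while any labeled patch with two or more tiles properly majorizes the singleton made from one of them. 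Hence $A(\calP)=\{\{(T,l)\}\mid(T,l)\in\calP\}$ and $\bigvee A(\calP)=\bigcup A(\calP)=\calP$, so $\Patch_L(X)$ is atomistic.

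For the limit inclusion property I would take $\calP\in\Patch_L(X)$ and an atom $\{(T,l)\}$, and assume that for every $V\in\mathscr{V}$ there is $\gamma_V\in V$ with $\gamma_V\{(T,l)\}\leqq\calP$, that is $(\gamma_VT,l)\in\calP$ (the label is unchanged since $\gamma(T,l)=(\gamma T,l)$). Choose an interior point $p\in T^{\circ}$, which exists because $T=\overline{T^{\circ}}$ is nonempty. Using joint continuity of the action, openness of $T^{\circ}$, and continuity of inversion in $\Gamma$, one finds $V_0\in\mathscr{V}$ with $\gamma^{-1}p\in T^{\circ}$, i.e.\ $p\in(\gamma T)^{\circ}$, for all $\gamma\in V_0$. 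Then whenever $V_1,V_2\in\mathscr{V}$ satisfy $V_j\subset V_0$, the interiors $(\gamma_{V_1}T)^{\circ}$ and $(\gamma_{V_2}T)^{\circ}$ both contain $p$; since $(\gamma_{V_1}T,l),(\gamma_{V_2}T,l)\in\calP$ and $\calP$ is an $L$-labeled patch, overlapping interiors force $\gamma_{V_1}T=\gamma_{V_2}T$, and the labels agree automatically. Writing $T_0=\gamma_{V_0}T$, we obtain $(T_0,l)\in\calP$ and $\gamma_VT=T_0$ for all $V\in\mathscr{V}$ with $V\subset V_0$.

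It remains to show $T=T_0$, which gives $(T,l)=(T_0,l)\in\calP$ and finishes the proof. This is the one place where closedness of tiles replaces the open-tile point-chasing of Lemma~\ref{lem_patch_atomistic_LIP}: if $x\in T$, then for $V\subset V_0$ one has $\gamma_Vx\in\gamma_VT=T_0$, and as $V$ shrinks to $\{e\}$ joint continuity gives $\gamma_Vx\to x$, so $x\in T_0$ because $T_0$ is closed; conversely if $x\in T_0$ then $\gamma_V^{-1}x\in\gamma_V^{-1}T_0=T$ for $V\subset V_0$, and $\gamma_V^{-1}x\to x$ (here $\rho_{\Gamma}(\gamma_V^{-1},e)=\rho_{\Gamma}(\gamma_V,e)\to 0$ by left-invariance), whence $x\in T$. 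I expect this last step to be the only genuine obstacle: one has to trade the openness of tiles for an interior-point/closedness argument and check that the net $(\gamma_V^{-1})_V$ still tends to $e$, while everything else is a word-for-word transcription of the $\Patch(X)$ proof with ``$\gamma_{V_1}T=\gamma_{V_2}T$'' strengthened to ``equal underlying tile and equal label'' via the labeled-patch axiom.
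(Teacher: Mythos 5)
Your proposal is correct and follows exactly the route the paper takes: establish that $\Patch_L(X)$ is atomistic and has the limit inclusion property by adapting the proof of Lemma~\ref{lem_patch_atomistic_LIP}, then invoke Proposition~\ref{prop_sufficint_condition_hausdorff}. The paper leaves the labeled lemma unproved (``by a similar argument''), and your replacement of the overlap condition by overlapping interiors, together with the closedness of $T=\overline{T^{\circ}}$ in the final step $T=T_0$, is precisely the adaptation that is needed.
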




Finally we directly prove on the following product abstract pattern space, the
local matching topology is Hausdorff.

\begin{lem}\label{lem_prod_CL(X)_hausdorff)}
       On the product $\prod_{i\in I}\calC(X)$, where $I$ is a non-empty set,
      $\calC(X)$ is given in Example
      \ref{2X_as_Gamma_abstract pattern_sp}, and the structure of $\Gamma$-abstract
 pattern space is given by Lemma \ref{lemmma_product_abstract pattern_space} and
   Lemma \ref{lem_product_Gamma-abstract pattern_space}, the local matching topology is
     Hausdorff. 
\end{lem}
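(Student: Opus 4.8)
The plan is to argue directly that the intersection $\bigcap_{K,V}\calU_{K,V}$, taken over all $K\in\Cpt(X)$ and $V\in\mathscr{V}$, is the diagonal of $\prod_{i\in I}\calC(X)$; by the standard characterization of separated uniform spaces (\cite{MR1726779}) this is precisely the statement that the local matching topology is Hausdorff. Note that we cannot simply invoke Proposition \ref{prop_sufficint_condition_hausdorff} here, because $\prod_{i\in I}\calC(X)$ is in general \emph{not} atomistic: the atoms are the tuples having a one-point set $\{x\}$ in a single coordinate and $\emptyset$ in every other, and such an atom lies below $(\calP_i)_{i}$ only when $x$ belongs to $\calP_j$ for exactly one index $j$, so points lying in two or more of the closed sets $\calP_i$ contribute nothing to $\bigvee A((\calP_i)_i)$. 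Hence a direct argument with entourages is needed, and the diagonal condition takes the place of the hypotheses of that proposition.

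So I would take $\calP=(\calP_i)_{i\in I}$ and $\calQ=(\calQ_i)_{i\in I}$ with $(\calP,\calQ)\in\calU_{K,V}$ for every $K\in\Cpt(X)$ and $V\in\mathscr{V}$, fix an index $i$ and a point $x\in\calP_i$, and feed in the compact sets $K_n=B(x_0,R_n)$ with $R_n\nearrow\infty$ (chosen large enough that $x\in K_n$) together with the compact neighbourhoods $V_n=B(e,r_n)$ of $e$ with $r_n\searrow0$; these are admissible because both metrics are proper. For each $n$ this yields $\gamma_n\in V_n$ with $\calP\sci K_n=(\gamma_n\calQ)\sci K_n$, and reading off the $i$-th coordinate (using that $\Gamma$ acts coordinatewise on the product) this says $\calP_i\cap K_n=\gamma_n\calQ_i\cap K_n$. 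Since $x\in\calP_i\cap K_n$, we obtain $x\in\gamma_n\calQ_i$, i.e.\ $\gamma_n^{-1}x\in\calQ_i$.

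Letting $n\to\infty$ finishes the argument: because $\Gamma$ acts by isometries we have $\rho_X(\gamma_n^{-1}x,x)=\rho_X(x,\gamma_n x)$, and $\gamma_n\to e$ together with joint continuity of the action gives $\gamma_n x\to x$, hence $\gamma_n^{-1}x\to x$; since $\calQ_i$ is closed, $x\in\calQ_i$. Thus $\calP_i\subset\calQ_i$ for every $i$. The intersection of all entourages of a uniform structure is a symmetric relation, so $(\calQ,\calP)$ likewise lies in every $\calU_{K,V}$, and the same argument gives $\calQ_i\subset\calP_i$ for every $i$; hence $\calP=\calQ$. This shows $\bigcap_{K,V}\calU_{K,V}$ is the diagonal, so the local matching topology on $\prod_{i\in I}\calC(X)$ is Hausdorff. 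The one point requiring care is the failure of the atomistic property noted above, which forces the switch to the entourage-intersection formulation; after that the reasoning is routine, its only real inputs being properness of the two metrics and joint continuity of the $\Gamma$-action.
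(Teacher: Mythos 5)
Your argument is correct and is essentially the paper's own proof: the paper likewise argues directly that the intersection of all $\calU_{K,V}$ is the diagonal, taking for each $x\in D_i$ the compact set $K=\{x\}$ (rather than your large balls, an immaterial difference) to get $\gamma_V^{-1}x\in E_i$ and then using continuity of the action and closedness of $E_i$ to conclude $x\in E_i$, finishing by symmetry. Your parenthetical description of the atoms of $\prod_{i\in I}\calC(X)$ is slightly off (tuples with the same singleton $\{x\}$ in several coordinates and $\emptyset$ elsewhere are also atoms), but this aside plays no role in the proof and the conclusion that a direct entourage argument is appropriate matches the paper.
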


\begin{proof}
      Take two elements $D=(D_i)_{i\in I}$ and $E=(E_i)_{i\in I}$ from
      $\prod_{i\in I}\calC(X)$ and assume $(D,E)\in\calU_{K,V}$ for any
      $K\in\Cpt(X)$ and $V\in\mathscr{V}$. We will show $D=E$.

      For each $i\in I$,  $x\in D_i$ and $V\in\mathscr{V}$, there is $\gamma_V\in V$
      such that $D\sci\{x\}=(\gamma_V E)\sci \{x\}$. This implies
      $\{x\}=D_i\cap \{x\}=(\gamma_VE_i)\cap\{x\}$, and so $\gamma^{-1}_V x\in E_i$.
      Since the action $\Gamma\curvearrowright X$ is continuous and $E_i$ is closed,
      we see $x\in E_i$ and $D_i\subset E_i$. By symmetry $D_i=E_i$.
      Since $i$ is arbitrary, we have $D=E$.
\end{proof}

\begin{cor}
     The local matching topologies on $\calC(X), \LF(X), \UD_r(X)$, $\UD(X)$ and
     $\UD^I(X)$
 are Hausdorff. 
\end{cor}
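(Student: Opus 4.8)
The plan is to realize each of $\calC(X)$, $\LF(X)$, $\UD_r(X)$, $\UD(X)$ and $\UD^I(X)$ as an abstract pattern subspace, closed under the $\Gamma$-action, of a suitable product $\prod_{i\in I}\calC(X)$, and then to invoke Lemma \ref{lem_prod_CL(X)_hausdorff)} together with the fact that such a subspace inherits the Hausdorff property of the local matching topology. Accordingly, the first step I would carry out is to record the following elementary compatibility statement: if $\Pi'$ is an abstract pattern subspace of an abstract pattern space $\Pi$ over $(X,\Gamma)$ that is closed under the $\Gamma$-action (so that $\Pi'$ is itself an abstract pattern space over $(X,\Gamma)$ by Lemma \ref{lem_abstract pattern_subspace_over_X_Gamma}), then for all $K\in\Cpt(X)$ and $V\in\mathscr{V}$,
\[
  \calU^{\Pi'}_{K,V}=\calU^{\Pi}_{K,V}\cap(\Pi'\times\Pi'),
\]
where the superscript indicates in which space the entourage is computed. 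This is immediate because the defining relation ``there is $\gamma\in V$ with $\calP\sci K=(\gamma\calQ)\sci K$'' refers only to the cutting-off operation and the $\Gamma$-action, which on $\Pi'$ are by definition the restrictions of those on $\Pi$. Hence the local matching uniform structure on $\Pi'$ is exactly the subspace uniform structure induced from $\Pi$, and since a subspace of a separated uniform space is separated, the local matching topology on $\Pi'$ is Hausdorff whenever that on $\Pi$ is.

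Next I would observe, using that the cutting-off operation and the $\Gamma$-action on a product $\Gamma$-abstract pattern space are defined coordinatewise (Lemma \ref{lemmma_product_abstract pattern_space}, Lemma \ref{lem_product_Gamma-abstract pattern_space}), that if $\Pi'_i\subset\Pi_i$ is a $\Gamma$-invariant abstract pattern subspace for each $i$, then $\prod_i\Pi'_i$ is a $\Gamma$-invariant abstract pattern subspace of $\prod_i\Pi_i$. Combining the previous paragraph with Lemma \ref{lem_prod_CL(X)_hausdorff)} then gives Hausdorffness for all the spaces in question: taking $I$ a one-point set, $\calC(X)$ is Hausdorff (alternatively, the proof of Lemma \ref{lem_prod_CL(X)_hausdorff)} applies verbatim to $\calC(X)$); by the inclusions recorded just after Definition \ref{def_subspace} and in Example \ref{2X_as_Gamma_abstract pattern_sp}, $\LF(X)$ is a $\Gamma$-invariant abstract pattern subspace of $\calC(X)$, $\UD(X)$ of $\LF(X)$ (here properness of $\rho_X$ is used), and $\UD_r(X)$ of $\UD(X)$, so these three are Hausdorff; finally $\prod_{i\in I}\UD(X)$ is a $\Gamma$-invariant abstract pattern subspace of $\prod_{i\in I}\calC(X)$, hence Hausdorff by Lemma \ref{lem_prod_CL(X)_hausdorff)}, and $\UD^I(X)$ is such a subspace of $\prod_{i\in I}\UD(X)$ by construction (Example \ref{ex_multi_set}), so $\UD^I(X)$ is Hausdorff as well.

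There is essentially no obstacle here: the real content is entirely in Lemma \ref{lem_prod_CL(X)_hausdorff)}, and the only point requiring a line of justification is the identity $\calU^{\Pi'}_{K,V}=\calU^{\Pi}_{K,V}\cap(\Pi'\times\Pi')$ --- that passing to a $\Gamma$-invariant abstract pattern subspace does not alter the local matching topology --- which is read off directly from the definition of $\calU_{K,V}$. The remaining work is the bookkeeping of checking the chain of subspace inclusions, all of which are already stated in Section \ref{section_abstract pattern_space}.
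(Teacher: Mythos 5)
Your proposal is correct and follows essentially the same route as the paper, which simply observes that all five spaces sit inside $\calC(X)$ or $\prod_{i\in I}\calC(X)$ and invokes Lemma \ref{lem_prod_CL(X)_hausdorff)}. Your only addition is to make explicit the (true and worth noting) fact that the intrinsic local matching uniform structure on a $\Gamma$-invariant abstract pattern subspace coincides with the induced subspace uniform structure, a point the paper leaves implicit.
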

\begin{proof}
      Clear by
     Lemma \ref{lem_prod_CL(X)_hausdorff)} since these spaces are included in $\calC(X)$
     or $\prod_{i\in I}\calC(X)$.
\end{proof}




\subsection{The completeness of local matching topology, FLC and the compactness of the
  continuous hull}
\label{Subsection_complete}
Next we prove that under a mild condition the local matching uniform structure
on a subshift is complete.

In this subsection $\Pi$ is a glueable abstract pattern space over $(X,\Gamma)$.

\begin{lem}\label{lem_for_Sigma_complete}
        For each $n=1,2,\ldots$ take $\gamma_n\in\Gamma$ such that
         $\rho_{\Gamma}(e,\gamma_n)<\frac{1}{2^{n+1}}$. Then the following hold:
         \begin{enumerate}
	  \item $\rho_{\Gamma}(\gamma_n\gamma_{n-1}\cdots\gamma_m,e)<\frac{1}{2^m}$ for
               each $n\geqq m\geqq 1$.
          \item For any $m\geqq 1$ 
          the sequence $(\gamma_n\gamma_{n-1}\cdots\gamma_m)_{n\geqq m}$ is a Cauchy 
          sequence.
	 \end{enumerate}
\end{lem}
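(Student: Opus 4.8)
The plan is to prove both statements by direct estimate using the left-invariance of $\rho_\Gamma$ together with the triangle inequality, treating (2) as an easy consequence of (1).

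For part (1), I would fix $n \geqq m \geqq 1$ and estimate $\rho_\Gamma(\gamma_n\gamma_{n-1}\cdots\gamma_m, e)$ by inserting intermediate products and applying the triangle inequality repeatedly. Writing $\delta_k = \gamma_n\gamma_{n-1}\cdots\gamma_k$ for $m \leqq k \leqq n$, I would estimate $\rho_\Gamma(\delta_k, e) \leqq \rho_\Gamma(\delta_k, \delta_{k+1}) + \rho_\Gamma(\delta_{k+1}, e)$, and use left-invariance to identify $\rho_\Gamma(\delta_k, \delta_{k+1}) = \rho_\Gamma(\gamma_n\cdots\gamma_{k+1}\gamma_k, \gamma_n\cdots\gamma_{k+1}) = \rho_\Gamma(\gamma_k, e) < \frac{1}{2^{k+1}}$. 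Iterating this telescoping bound from $k = m$ up to $k = n$ gives
\begin{align*}
\rho_\Gamma(\gamma_n\cdots\gamma_m, e) \leqq \sum_{k=m}^{n} \rho_\Gamma(\gamma_k, e) < \sum_{k=m}^{n} \frac{1}{2^{k+1}} < \sum_{k=m}^{\infty} \frac{1}{2^{k+1}} = \frac{1}{2^m},
\end{align*}
which is exactly the claimed inequality. (One must be a little careful that the empty-tail case $n = m$ also works, where the sum has the single term $\frac{1}{2^{m+1}} < \frac{1}{2^m}$.)

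For part (2), fix $m \geqq 1$ and let $\e > 0$. Choose $N \geqq m$ with $\frac{1}{2^N} < \e$. For $n' \geqq n \geqq N$, I would again use left-invariance: writing $\gamma_{n'}\cdots\gamma_m = (\gamma_{n'}\cdots\gamma_{n+1})(\gamma_n\cdots\gamma_m)$, we get $\rho_\Gamma(\gamma_{n'}\cdots\gamma_m, \gamma_n\cdots\gamma_m) = \rho_\Gamma(\gamma_{n'}\cdots\gamma_{n+1}, e)$, and by part (1) (applied with lower index $n+1$ and upper index $n'$) this is $< \frac{1}{2^{n+1}} \leqq \frac{1}{2^{N+1}} < \e$. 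Hence the sequence $(\gamma_n\gamma_{n-1}\cdots\gamma_m)_{n \geqq m}$ is Cauchy.

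The argument is entirely routine; there is no real obstacle. The only point requiring minor attention is bookkeeping with the index ranges — making sure the telescoping in (1) starts and ends at the right places, and that the product decomposition in (2) correctly isolates a "tail" product to which part (1) applies. Left-invariance of $\rho_\Gamma$ is used at every step to reduce a distance between two products to the norm of a suffix.
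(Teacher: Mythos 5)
Your part (1) is correct and coincides with the paper's argument: telescope through the intermediate points $\gamma_n\cdots\gamma_k$ and cancel the common \emph{left} factor $\gamma_n\cdots\gamma_{k+1}$ using left-invariance.

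Part (2), however, contains a genuine gap. You assert that
\[
\rho_{\Gamma}(\gamma_{n'}\cdots\gamma_m,\ \gamma_n\cdots\gamma_m)
=\rho_{\Gamma}(\gamma_{n'}\cdots\gamma_{n+1},\ e)
\]
``by left-invariance''. But here the common factor $\zeta=\gamma_n\cdots\gamma_m$ sits on the \emph{right}: the two points are $g\zeta$ and $e\zeta$ with $g=\gamma_{n'}\cdots\gamma_{n+1}$, and left-invariance only cancels a common \emph{left} factor ($\rho_{\Gamma}(ha,hb)=\rho_{\Gamma}(a,b)$). What left-invariance actually gives is $\rho_{\Gamma}(g\zeta,\zeta)=\rho_{\Gamma}(e,\zeta^{-1}g^{-1}\zeta)$, a conjugate of $g^{-1}$, which need not be small just because $g$ is close to $e$; the identity you use would require right-invariance, which is not assumed (the paper emphasizes that $\Gamma$ may be non-commutative and the metric is only left-invariant). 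The paper closes exactly this gap: since $\rho_{\Gamma}$ is proper, $B(e,1)$ is compact, so multiplication is uniformly continuous on $B(e,1)\times B(e,1)$; hence for every $\varepsilon>0$ there is $\delta>0$ such that $\gamma,\eta,\zeta\in B(e,1)$ and $\rho_{\Gamma}(\gamma,\eta)<\delta$ imply $\rho_{\Gamma}(\gamma\zeta,\eta\zeta)<\varepsilon$. Applying this with $\gamma=\gamma_{n'}\cdots\gamma_{n+1}$, $\eta=e$, $\zeta=\gamma_n\cdots\gamma_m$ (all in $B(e,1)$ by part (1)) and $n$ large enough that $\rho_{\Gamma}(\gamma_{n'}\cdots\gamma_{n+1},e)<\delta$ yields the Cauchy estimate. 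Your decomposition into a tail times a fixed product is the right idea, but the false isometry step must be replaced by this uniform-continuity argument.
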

\begin{proof}
         1. We have 
           \begin{align*}
	         \rho_{\Gamma}(\gamma_n\cdots\gamma_m,e)&\leqq
        \sum_{k=m}^{n-1}\rho_{\Gamma}(\gamma_n\cdots\gamma_k,\gamma_n\cdots\gamma_{k+1})
	       +\rho_{\Gamma}(\gamma_n,e)\\
        &=\sum_{k=m}^n\rho_{\Gamma}(e,\gamma_{k})\\
	    &<  \sum\frac{1}{2^{k+1}}\\
            & <\frac{1}{2^{m}}.
	   \end{align*}
       2. For any $\e>0$, there is $\delta>0$ such that if
        $\gamma,\eta,\zeta\in B(e,1)$ and $\rho_{\Gamma}(\gamma,\eta)<\delta$, then
       $\rho_{\Gamma}(\gamma\zeta,\eta\zeta)<\e$. This follows from the fact that
       $B(e,1)$ is compact and so the multiplication 
      $B(e,1)\times B(e,1)\ni(\gamma,\eta)\mapsto\gamma\eta\in \Gamma$ is uniformly 
      continuous.
      If $n> k\geqq m$ and $k$ is large enough, by 1.,
     \begin{align*}
           \rho_{\Gamma}(\gamma_n\cdots\gamma_{k+1},e)<\delta.
     \end{align*}
     By the definition of $\delta$, we have
     \begin{align*}
           \rho_{\Gamma}(\gamma_n\cdots\gamma_m,\gamma_k\cdots\gamma_m)<\e.
     \end{align*}
     Since $\e$ was arbitrary, we see the sequence is Cauchy.
\end{proof}

\begin{thm}\label{thm_sigma_complete_general_ver}
             Let $\Sigma$ be a supremum-closed subshift of $\Pi$
          on which the local matching topology is Hausdorff.
          Then the local matching uniform structure on $\Sigma$ is complete.
\end{thm}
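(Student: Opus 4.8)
The plan is to reduce to sequential completeness via metrizability, and then to build the limit of a Cauchy sequence by aligning its terms with a convergent sequence of group elements and gluing the resulting stabilised pieces. First I would note that, with $K_n=B(x_0,n)$ and $V_n=B(e,2^{-n-1})$, the countable family $\{\calU_{K_n,V_n}\}$ is a fundamental system of entourages: any $K\in\Cpt(X)$ is bounded, hence contained in some $K_n$; any $V\in\mathscr{V}$ contains some ball $B(e,\delta)$, hence contains some $V_n$; and $\calU_{K_n,V_n}\subset\calU_{K,V}$ by Lemma \ref{lem_for_UKV_fund_sys_entourage}. Combined with the Hausdorff hypothesis this makes the local matching uniform structure on $\Sigma$ metrizable (\cite[IX, \S 2]{MR1726872}), so it suffices to show that every Cauchy sequence in $\Sigma$ converges in $\Sigma$.

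Let $(\calP_n)_n$ be such a sequence. Passing to a subsequence (legitimate, since a Cauchy sequence with a convergent subsequence converges) I would arrange $(\calP_n,\calP_{n+1})\in\calU_{K_n,V_n}$ and pick $\gamma_n\in V_n$ with $\calP_n\sci K_n=(\gamma_n\calP_{n+1})\sci K_n$. Put $\mu_1=e$ and $\mu_{n+1}=\mu_n\gamma_n$; left-invariance of $\rho_{\Gamma}$ gives $\rho_{\Gamma}(\mu_{n+1},\mu_n)=\rho_{\Gamma}(\gamma_n,e)<2^{-n-1}$, so $(\mu_n)$ is Cauchy in $\Gamma$, and since $\rho_{\Gamma}$ is proper, hence $\Gamma$ complete, $\mu_n\to\mu$ for some $\mu\in\Gamma$ — Lemma \ref{lem_for_Sigma_complete} is the tool tailored to handling such partial products. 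Set $\calQ_n=\mu_n\calP_n\in\Sigma$. Applying $\mu_n$ to the matching identity and using equivariance of the cutting-off operation gives $\calQ_n\sci C_n=\calQ_{n+1}\sci C_n$, where $C_n:=\mu_n K_n=B(\mu_n x_0,n)$; since $\mu_n x_0\to\mu x_0$ and the radii grow, every $K\in\Cpt(X)$ lies in $C_n$ for all large $n$, and chaining the identities then shows $(\calQ_n\sci K)_n$ is eventually constant. Fix the exhaustion $X=\bigcup_m L_m$ with $L_m=B(x_0,m)$, and let $\calR_m\in\Pi$ be the eventual value of $(\calQ_n\sci L_m)_n$. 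The $\calR_m$ form an increasing chain for $\geqq$, hence are pairwise compatible, and are locally finite; so by glueability of $\Pi$ the supremum $\calR:=\bigvee_m\calR_m$ exists in $\Pi$, and glueability also gives $\calR\sci L_m=\calR_m$. Consequently $\calQ_n\sci K=\calR\sci K$ for all large $n$ and every $K\in\Cpt(X)$, that is, $\calQ_n\to\calR$; translating back by $\mu_n^{-1}$ and using $\mu_n^{-1}\mu\to e$ (left-invariance of $\rho_{\Gamma}$) together with equivariance, one gets $\calP_n\to\mu^{-1}\calR$.

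The step I expect to be the main obstacle is verifying that the limit genuinely lies in $\Sigma$, i.e. that $\calR\in\Sigma$ — equivalently $\mu^{-1}\calR\in\Sigma$, $\Sigma$ being a subshift. This is precisely where supremum-closedness of $\Sigma$ is to be used: each $\calR_m$ is the cutting-off $\calQ_n\sci L_m$ of an element $\calQ_n$ of $\Sigma$, so one recognises $\{\calR_m\}_m$ as a pairwise compatible, locally finite subfamily of $\Sigma$, and supremum-closedness then yields $\bigvee_m\calR_m=\calR\in\Sigma$. The delicate points are exactly that the $\calR_m$ be honest members of $\Sigma$ — so that supremum-closedness applies — and that the supremum formed inside $\Pi$ coincide with the one relevant to $\Sigma$; once the $\Gamma$-alignment of the second paragraph is correctly arranged, the remaining verifications are routine manipulations with the cutting-off axioms and the glueability identity $(\bigvee\Xi)\sci C=\bigvee(\Xi\sci C)$.
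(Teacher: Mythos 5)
Your proof is correct and follows essentially the same route as the paper: pass to a subsequence with consecutive matchings in $\calU_{K_n,V_n}$, align it by left-multiplying with partial products of the $\gamma_n$ (the paper instead uses the convergent infinite products $\xi_n=\lim_m\gamma_m\cdots\gamma_n$, which tend to $e$ and so avoid your final back-translation by $\mu^{-1}$, while your finite products $\mu_n$ make the Cauchy estimate immediate from left-invariance --- a purely cosmetic trade-off), and then glue the stabilised cut-offs using glueability and supremum-closedness. The one loose point you yourself flag --- asserting that the cut-offs $\calQ_n\sci L_m$ of elements of $\Sigma$ are themselves members of $\Sigma$, which the stated definition of subshift (closure under the $\Gamma$-action only) does not guarantee --- is exactly the same leap the paper's own proof makes when it applies supremum-closedness to $\Xi_k=\{(\xi_{n+1}\calP_{n+1})\sci K_n\mid n>k\}$, so your argument is faithful to the source at that step as well.
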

\begin{proof}
       By \cite[IX, \S 2]{MR1726872}, the local matching uniform structure on $\Sigma$
      is metrizable (see also Corollary \ref{102557_9Aug18}.)
       It suffices to show that any Cauchy sequences in $\Sigma$ converge.
     
      Let $(\calP_n)_n$ be a Cauchy sequence in $\Sigma$.
      Set $K_n=B(x_0,n)$ and $V_n=B(e,\frac{1}{2^{n+1}})\subset\Gamma$
      for each $n=1,2,\ldots$.
      Since it suffices to show a subsequence of $(\calP_n)$ converges, we may assume
 that $(\calP_k,\calP_l)\in\calU_{K_n,V_n}$ for any $n>0$ and $k,l\geqq n$.
      For each $n>0$ there is $\gamma_n\in V_n$ such that
      \begin{align*}
            (\gamma_n\calP_n)\sci K_n=\calP_{n+1}\sci K_n.
      \end{align*}
       By Lemma \ref{lem_for_Sigma_complete}, since $\Gamma$ is complete,
      there is a limit
      \begin{align*}
            \xi_n=\lim_{m\rightarrow\infty}\gamma_m\gamma_{m-1}\cdots\gamma_n
       \in B(e,\frac{1}{2^n})
      \end{align*}
       for each $n>0$. Note that $\xi_n=\xi_{n+1}\gamma_n$ for each $n$.

       Since the group action is continuous, we can take $n_0\in\mathbb{N}$ such that
     if $\gamma\in B(e,\frac{1}{2^{n_0}})$, then $\gamma x_0\in B(x_0,1)$.
       If $n,m\geqq n_0$ and $n<m$, then since
       \begin{align*}
       	    \xi_{m+1}K_m=B(\xi_{m+1}x_0,m)\supset B(x_0,m-1)\supset B(x_0,n)=K_n,
       \end{align*}
       we have
       \begin{align*}
       	   (\xi_m\calP_m)\sci K_n&=(\xi_{m+1}((\gamma_m\calP_m)\sci K_m))\sci K_n\\
                               &=(\xi_{m+1}(\calP_{m+1}\sci K_m))\sci K_n\\
                               &=(\xi_{m+1}\calP_{m+1})\sci K_n.
       \end{align*}
       By induction we have
         \begin{align}
	      (\xi_m\calP_m)\sci K_n=(\xi_{n+1}\calP_{n+1})\sci K_n
              \label{eq1_Sigma_complete}
	 \end{align}
        for each  $n,m>n_0$ with $m>n$.
         This means that
          \begin{align}
	      (\xi_{n+1}\calP_{n+1})\sci K_n\leqq(\xi_{n+2}\calP_{n+2})\sci K_{n+1}
               \label{eq2_Sigma_complete}
	  \end{align}
         for any $n>n_0$.

       Set 
       \begin{align*}
	     \calQ_k=\bigvee\{(\xi_{n+1}\calP_{n+1})\sci K_n\mid n>k\}
       \end{align*}
        for each $k>n_0$.
       We need to show that such a supremum exists.
       To this objective it suffices to show that
       $\Xi_k=\{(\xi_{n+1}\calP_{n+1})\sci K_n\mid n>k\}$ is locally finite and 
       pairwise compatible.
       By (\ref{eq1_Sigma_complete}), we have
        \begin{align*}
	    (\xi_{m+1}\calP_{m+1})\sci K_m\sci K_n=
               (\xi_{n+1}\calP_{n+1})\sci K_n
	\end{align*}
        for any $n,m$ with $k<n<m$, and so $\Xi_k$ is pairwise compatible.
       To prove $\Xi$ is locally finite, take a closed ball $B$.
       For any sufficiently large $n$, we have $K_n\supset B$, and so if
       $m$ is larger than this $n$ we have by (\ref{eq1_Sigma_complete})
       \begin{align*}
       	    (\xi_{m+1}\calP_{m+1})\sci K_m\sci B=(\xi_{m+1}\calP_{m+1})\sci K_n\sci B
              =(\xi_{n+1}\calP_{n+1})\sci K_n\sci B,
       \end{align*}
        and so $\Xi\sci B$ is finite. Since $B$ was arbitrary, $\Xi$ is locally 
         finite. Thus $\calQ_k$ is well-defined and is in $\Sigma$ since 
          $\Sigma$ is supremum-closed.

          By $\Xi_1\supset\Xi_k$, we have $\calQ_1\geqq\calQ_k$ for each $k$.
          On the other hand, by (\ref{eq2_Sigma_complete}) 
           $\calQ_k\geqq(\xi_{n+1}\calP_{n+1})\sci K_n$ for any $n$ and so
          $\calQ_k\geqq\calQ_1$; we have shown $\calQ_1=\calQ_k$ for any $k>0$.

         Finally $\calQ_1$ is the limit of $(\calP_n)$, since for each $k>n_0$,
         (\ref{eq1_Sigma_complete}) implies that
          \begin{align*}
	         \calQ_1\sci K_k&=\bigvee\{\xi_{n+1}\calP_{n+1}\sci K_k\mid n>k\}\\
                               &=(\xi_{k+1}\calP_{k+1})\sci K_k
	  \end{align*}
          and so $\calP_{k+1}\in\calU_{K_k,V_k}(\calQ_1)$.
          (Note that $\calU_{K_{k+1},V_{k+1}}\subset\calU_{K_k,V_k}$.)
\end{proof}

\begin{cor}
      On $C_b(X,Y)$ (Lemma \ref{lem_Cb_atomistic_LIP}), $\Patch(X)$, $\Patch_L(X)$
        (Example \ref{ex_patch_Gamma_abstract pattern_sp})
      $\UD^I_r(X)$ (Example \ref{ex_multi_set}, Example \ref{2X_as_Gamma_abstract pattern_sp}),
 $\calC(X), \LF(X),\UD(X)$
       (Example \ref{example_UD}, Example \ref{2X_as_Gamma_abstract pattern_sp}),
       and $\WDC_r(X)$ (Example \ref{ex_Gamma-abstract pattern_space_measures}),
       the local matching uniform structures are complete.
\end{cor}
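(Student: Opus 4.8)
The plan is to deduce the statement from Theorem \ref{thm_sigma_complete_general_ver}: for each space $\Sigma$ in the list I would exhibit a glueable abstract pattern space $\Pi$ over $(X,\Gamma)$ realizing $\Sigma$ as a supremum-closed subshift, combine this with the Hausdorffness of the local matching topology already established above, and quote the theorem.

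The glueable hulls are at hand. From Subsection \ref{subsection_glueable_pat_sp}: $\Patch(X)$ and $2^X$ are glueable, and the Remark there records that $\calC(X)$, $\LF(X)$, $\UD_r(X)$ are glueable; $\UD_r^I(X)$ is glueable by Lemma \ref{lem_UDIr_glueable}; the Proposition there gives glueability of $\Map(X,Y,y_0)$, and since glueability does not involve the $\Gamma$-action the same holds for $\Map_\phi(X,Y,y_0)$; and $\WDC_r(X)$ is glueable by the argument of Lemma \ref{lem_UDIr_glueable} --- a locally finite, pairwise compatible family of weighted Dirac combs supported on $r$-uniformly discrete sets has consistent weights, so it glues to the weighted Dirac comb supported on the (still $r$-uniformly discrete) union of the supports, and this commutes with cutting-off. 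The relevant $\Gamma$-actions are those of Example \ref{ex_patch_Gamma_abstract pattern_sp}, Example \ref{2X_as_Gamma_abstract pattern_sp}, Example \ref{ex_map_rho} and Example \ref{ex_Gamma-abstract pattern_space_measures}. Hausdorffness of the local matching topology on each $\Sigma$ is also available: Corollary \ref{cor_patch_hausdorff} and its $\Patch_L$ analogue; the corollary after Lemma \ref{lem_prod_CL(X)_hausdorff)}, whose proof (embedding into $\calC(X)$ or $\prod_i\calC(X)$) covers $\calC(X),\LF(X),\UD_r^I(X),\UD(X)$; the corollary after Lemma \ref{lem_Cb_atomistic_LIP} for $C_b(X,Y,y_0)$; and, for $\WDC_r(X)$, one checks as in Lemma \ref{lem_patch_atomistic_LIP} that it is atomistic with atoms the single-point combs $w\delta_x$ and has the limit inclusion property --- if $w\delta_{\gamma_V x}\leqq\Phi$ for all small $V$ then the net $(\gamma_V x)$ lies in the $r$-uniformly discrete set $\supp\Phi$ and converges to $x$, hence is eventually equal to $x$ --- so Proposition \ref{prop_sufficint_condition_hausdorff} applies.

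For the spaces that are glueable in themselves --- $\Patch(X)$, $\Patch_L(X)$, $\calC(X)$, $\LF(X)$, $\UD_r^I(X)$, $\WDC_r(X)$ --- one takes $\Sigma=\Pi$, which is vacuously supremum-closed, and Theorem \ref{thm_sigma_complete_general_ver} applies immediately. The remaining cases are the two genuine subshifts $C_b(X,Y,y_0)\subset\Map_\phi(X,Y,y_0)$ and $\UD(X)\subset\LF(X)$, for which one must verify supremum-closedness: that $\bigvee\Xi\in\Sigma$, the supremum being taken in the glueable hull, whenever $\Xi\subset\Sigma$ is locally finite and pairwise compatible. For $C_b(X,Y,y_0)$ one would argue that $g:=\bigvee\Xi$, formed in $\Map_\phi(X,Y,y_0)$, agrees with each $f\in\Xi$ on the closed set $\supp f$, that near any point only finitely many members of $\Xi$ have that point in their (closed) support and they agree there, so that $g$ is continuous and locally takes values in a finite union of relatively compact sets. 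For $\UD(X)$ one would need the union of a locally finite pairwise compatible family of uniformly discrete sets --- in particular the family $\Xi_k$ that the proof of Theorem \ref{thm_sigma_complete_general_ver} builds from a Cauchy sequence --- to be uniformly discrete.

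I expect these last two verifications to be the main obstacle, and they are genuinely delicate: the union of a locally finite pairwise compatible family of uniformly discrete sets need not be uniformly discrete, and for non-compact $Y$ the glued continuous function need not be bounded, so $\UD(X)$ and $C_b(X,Y,y_0)$ are not supremum-closed in their glueable hulls in general --- this is exactly where they part company with the glueable $\UD_r(X)$ and the compact-range case. For those two spaces the statement should accordingly be read with $\UD(X)$ replaced by $\UD_r(X)$, $r>0$, and with $C_b(X,Y,y_0)$ restricted to maps of relatively compact range, or else be backed by a separate completeness argument that extracts, from the local matching Cauchy condition, a uniform lower bound on separations, respectively a uniform bound on the range.
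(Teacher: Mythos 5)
Your route is the one the paper intends: the corollary is stated without proof as a direct application of Theorem \ref{thm_sigma_complete_general_ver} combined with the Hausdorffness results of Subsection \ref{subsection_hausdorff_property}, and for $\Patch(X)$, $\Patch_L(X)$, $\calC(X)$, $\LF(X)$, $\UD_r^I(X)$ and $\WDC_r(X)$ your verification is complete: each is glueable in itself, hence trivially a supremum-closed subshift of itself, and Hausdorff. Your direct check for $\WDC_r(X)$ is genuinely needed, since the paper nowhere asserts that $C_c(X)'$ is glueable, and it is correct: pairwise $r$-uniform discreteness of the unions of supports forces $r$-uniform discreteness of the total union, and the limit inclusion property holds because a net $\gamma_V x$ converging to $x$ inside an $r$-uniformly discrete closed set is eventually constant.

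Your reservations about the remaining two spaces are not excessive caution; the corollary is false for them as stated, and your proposed repairs are the right ones. For $\UD(X)$, the paper's own remark in Subsection \ref{subsection_glueable_pat_sp} already contains the counterexample: with $X=\Gamma=\mathbb{R}$ and $\calP_n=\{n,n+\frac{1}{n}\}$, the finite (hence uniformly discrete) sets $D_N=\bigcup_{0<|n|\leqq N}\calP_n$ form a Cauchy sequence in $\UD(\mathbb{R})$ whose limit in the complete Hausdorff space $\LF(\mathbb{R})$ is $\bigcup_n\calP_n$, which is locally finite but not uniformly discrete; Hausdorffness of $\LF(\mathbb{R})$ rules out any other limit, so $\UD(\mathbb{R})$ is not complete. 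For $C_b$, take $X=Y=\mathbb{R}$, $y_0=0$, $\phi$ trivial, and let $f_n$ agree with $x\mapsto x$ on $[-n,n]$ and taper continuously to $0$ outside $[-2n,2n]$: this is a Cauchy sequence of bounded continuous functions whose only candidate limit is the unbounded function $x\mapsto x$ (the space of all continuous functions is Hausdorff by the argument of Lemma \ref{lem_Cb_atomistic_LIP}). So the statement should be read with $\UD_r(X)$ in place of $\UD(X)$, and with $C_b(X,Y,y_0)$ restricted to compact $Y$ or to functions with relatively compact range uniformly controlled along the sequence; continuity of the glued limit is not the obstruction (the limit $\calQ_1$ in the proof of Theorem \ref{thm_sigma_complete_general_ver} agrees with the continuous function $\xi_{k+1}\calP_{k+1}$ on $B(x_0,k)$ for every $k$), but boundedness genuinely fails.
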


\begin{rem}
        This theorem is similar to
       Proposition 2.1 in \cite{MR1798991}
      and Theorem 3.10 in \cite{MR2137108}.
      The former proves the completeness of discrete and closed subsets of
      $\sigma$-compact locally compact abelian group, which is included in
      the latter.
      The latter proves the completeness of the space of discrete subsets of
      a $\sigma$-compact space, which is more general than Theorem
      \ref{thm_sigma_complete_general_ver} in that the author does not assume
      the existence
      of metrics in the space and the group, but less general in that
       it only deals with discrete sets, rather than any abstract patterns.
\end{rem}

Finally we define FLC  and the continuous hull
for abstract patterns and prove that FLC implies the compactness
of the continuous hull, if the action $\Gamma\curvearrowright X$ is proper.

\begin{defi}
      Take $\calP\in\Pi$. The continuous hull $X_{\calP}$ of $\calP$ is defined by
      \begin{align*}
            X_{\calP}=\overline{\{\gamma\calP\mid\gamma\in\Gamma\}},
      \end{align*}
       where the closure is taken with respect to the local matching topology.
\end{defi}

\begin{defi}
       An abstract pattern $\calP\in\Pi$ is said to have finite local complexity (FLC)
      if whenever we take a compact $K\subset X$, the set
       \begin{align*}
	    \{(\gamma\calP)\sci K\mid\gamma\in\Gamma\}
       \end{align*}
       is finite modulo the group action  $\Gamma\curvearrowright\Pi$.
\end{defi}

\emph{In what follows, 
$\Sigma$ is a supremum-closed, atomistic subshift of $\Pi$ with limit inclusion
property.}

We will use the following lemma to prove the compactness of a continuous hull.
\begin{lem}\label{lem_for_FLC_cpt}
      Assume the action $\Gamma\curvearrowright X$ is proper.
     Take $\calP_1,\calP_2\ldots$ from $\Sigma$.
     Assume
$\{\calP_n\sci K\mid n=1,2,\ldots\}$ is finite modulo $\Gamma\curvearrowright\Pi$, for any $K\in\Cpt(X)$.
       Then for any compact $K\subset X$ and $V\in\mathscr{V}$, there is a subsequence
       $(\calP_{n_j})_{j=1,2,\cdots}$ of $(\calP_n)_n$  such that
       \begin{align*}
	   (\calP_{n_j},\calP_{n_k})\in\calU_{K,V}
       \end{align*}
         for any $j$ and $k$.
\end{lem}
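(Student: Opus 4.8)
The plan is to combine the finiteness hypothesis, via the pigeonhole principle, with properness of the action to confine the relevant translations to a compact subset of $\Gamma$, and then to extract a convergent subsequence of those translations so that the group elements needed to align the $\calP_n$ on $K$ become small.

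Concretely, fix $K\in\Cpt(X)$ and $V\in\mathscr V$. First I would use joint continuity of the action and compactness of $K$ (a tube-lemma argument applied to the continuous function $(\gamma,x)\mapsto\rho_X(\gamma^{-1}x,x)$, which vanishes on $\{e\}\times K$) to produce $\delta>0$ and a neighbourhood $W$ of $e$ with $W\subseteq V$ such that $\rho_X(\gamma^{-1}x,x)\leqq\delta$ for all $\gamma\in W$ and $x\in K$. Put $\tilde K=\{y\in X\mid\rho_X(y,K)\leqq\delta\}$; since $X$ is proper this is compact, $K\subseteq\tilde K$, and for every $\gamma\in W$ we get $\gamma\tilde K\supseteq K$. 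Only now would I apply the hypothesis, to $\tilde K$: since $\{\calP_n\sci\tilde K\mid n\}$ is finite modulo $\Gamma$, the pigeonhole principle gives a fixed $\calR\in\Pi$ and, after passing to a subsequence, elements $\eta_n\in\Gamma$ with $\calP_n\sci\tilde K=\eta_n\calR$ for all $n$. If $\supp\calR=\emptyset$ then $\calP_n\sci K=(\calP_n\sci\tilde K)\sci K=0$ for all these $n$, so any two of them lie in $\calU_{K,V}$ with $\gamma=e$, and we are done; otherwise pick $x_R\in\supp\calR$, so that Lemma \ref{lemma_support_calP_sci_C} gives $\eta_n\supp\calR=\supp(\calP_n\sci\tilde K)\subseteq\tilde K$, whence $\eta_nx_R\in\tilde K$, and properness of the action $\Gamma\curvearrowright X$ forces the $\eta_n$ to lie in a compact subset of $\Gamma$.

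Since $\Gamma$ is metrizable, I would then pass to a further subsequence along which $\eta_n\to\eta_\infty$ for some $\eta_\infty\in\Gamma$; by continuity of the group operations $\eta_{n_j}\eta_{n_k}^{-1}\to e$ as $j,k\to\infty$, so after discarding finitely many terms we may assume $\gamma_{jk}:=\eta_{n_j}\eta_{n_k}^{-1}\in W$ for all $j,k$. For such $j,k$, equivariance of the cutting-off operation gives
\[
(\gamma_{jk}\calP_{n_k})\sci(\gamma_{jk}\tilde K)=\gamma_{jk}(\calP_{n_k}\sci\tilde K)=\gamma_{jk}\eta_{n_k}\calR=\eta_{n_j}\calR=\calP_{n_j}\sci\tilde K ,
\]
and cutting both sides by $K$, using that both $\gamma_{jk}\tilde K$ and $\tilde K$ contain $K$ together with the identity $(\calP\sci C_1)\sci C_2=\calP\sci(C_1\cap C_2)$, yields $(\gamma_{jk}\calP_{n_k})\sci K=\calP_{n_j}\sci K$. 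As $\gamma_{jk}\in W\subseteq V$, this is precisely $(\calP_{n_j},\calP_{n_k})\in\calU_{K,V}$, as desired.

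I expect the main obstacle to be the bookkeeping forced by the fact that cutting off $\gamma\calP_{n_k}$ on $K$ only controls $\calP_{n_k}$ on $\gamma^{-1}K$: one must enlarge $K$ to $\tilde K$ \emph{before} invoking the finiteness hypothesis, keep $\tilde K$ compact so that properness applies, and then verify that agreement on $\tilde K$ up to an element of $W$ propagates to agreement on $K$ after translation, which is exactly where $\gamma_{jk}\tilde K\supseteq K$ is used. A secondary subtlety is that the group elements one needs to make small are the products $\eta_{n_j}\eta_{n_k}^{-1}$, which is why one extracts a genuinely convergent subsequence of $(\eta_n)$ rather than merely a $\rho_\Gamma$-Cauchy one.
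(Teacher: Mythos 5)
Your proof is correct and follows essentially the same route as the paper's: enlarge $K$ to a compact set \emph{before} invoking the finiteness hypothesis, use properness of the action to confine the aligning elements $\eta_n$ to a compact subset of $\Gamma$, and extract a subsequence along which the pairwise quotients lie in $V$ while the translated enlarged set still contains $K$. The only differences are cosmetic (the paper enlarges to a ball $B(x_0,R+1)$ and controls $\eta_{\tau(m)}^{-1}\eta_{\tau(n)}K'$ via $\rho_X(\eta_{\tau(m)}^{-1}\eta_{\tau(n)}x_0,x_0)<1$, whereas you use a $\delta$-neighbourhood $\tilde K$ of $K$ and a tube-lemma neighbourhood $W$), so no further comment is needed.
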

\begin{proof}
       Take $K\in\Cpt(X)$ and $V\in\mathscr{V}$, and we will prove we can take a
      subsequence of $(\calP_n)_n$ with the above property.

 Take $R>0$ large enough so that $B(x_0,R)\supset K$
      hold.

 Set $K'=B(x_0,R+1)$. By the second condition in the statement of this lemma,
        \begin{align*}
	     \{\calP_n\sci K'\mid n=1,2,\cdots\}
	\end{align*}
        is finite modulo $\Gamma$-action.
       We can take an increasing map $\sigma\colon\mathbb{N}\rightarrow\mathbb{N}$
       and elements $\eta_1,\eta_2,\cdots\in\Gamma$ such that
       \begin{align}
	   \calP_{\sigma(1)}\sci K'=\eta_n(\calP_{\sigma(n)}\sci K')\label{144025_8Aug18}
       \end{align}
        for each $n=1,2,\ldots$.

        We may assume $\calP_{\sigma(1)}\sci K'\neq 0$ ($0$ is the zero element in $\Pi$,
        which is unique by Lemma \ref{uniqueness_zero_element}), since if it is $0$,
        $\calP_{\sigma(n)}\sci K'=0=\calP_{\sigma(m)}\sci K'$
        for each $n$ and $m$, which means
         $(\calP_{\sigma(n)},\calP_{\sigma(m)})\in\calU_{K,V}$.
         In the case where it is not $0$,
        the set $\supp\calP_{\sigma(n)}\sci K'$ is non-empty and included in $K'$ by
         Lemma \ref{lemma_support_calP_sci_C}.
         Since the action $\Gamma\curvearrowright X$ is proper, the set
         $\{\eta_1,\eta_2,\ldots\}$ is relatively compact.
          We can take an increasing map $\tau\colon\mathbb{N}\rightarrow\mathbb{N}$
         such that
         \begin{itemize}
	  \item $\rho_X(\eta_{\tau(n)}^{-1}\eta_{\tau(m)}x_0,x_0)<1$, and
          \item $\eta_{\tau(n)}^{-1}\eta_{\tau(m)}\in V$
	 \end{itemize}
         for each $n,m\in\mathbb{N}$.

          If $n$ and $m$ are natural numbers, by \eqref{144025_8Aug18} we have
          \begin{align*}
	        \eta_{\tau(n)}(\calP_{\sigma\circ\tau(n)}\sci K')&=
	                \calP_{\sigma(1)}\sci K'
	                =\eta_{\tau(m)}(\calP_{\sigma\circ\tau(m)}\sci K').
	  \end{align*}
          By multiplying both sides by $\eta_{\tau(m)}^{-1}$ and using
          the fact that the cutting-off
      operation is equivariant
       (Definition \ref{def_gamma-abstract pattern_space}), we obtain
         \begin{align}
	      (\eta_{\tau(m)}^{-1}\eta_{\tau(n)}\calP_{\sigma\circ\tau(n)})\sci
	            (\eta_{\tau(m)}^{-1}\eta_{\tau(n)}K')=
	              \calP_{\sigma\circ\tau(m)}\sci K'.\label{144209_8Aug18}
	 \end{align}
          Since $\eta_{\tau(m)}^{-1}\eta_{\tau(n)}K'=
            B(\eta_{\tau(m)}^{-1}\eta_{\tau(n)}x_0,R+1)\supset B(x_0,R)\supset K$ by
           the definition of $\tau$, we have, by cutting off both sides
           of \eqref{144209_8Aug18} by $K$,
           \begin{align*}
	          (\eta_{\tau(m)}^{-1}\eta_{\tau(n)}\calP_{\sigma\circ\tau(n)})\sci K
	          =\calP_{\sigma\circ\tau(m)}\sci K,
	   \end{align*}
           which means $(\calP_{\sigma\circ\tau(m)},\calP_{\sigma\circ\tau(n)})\in
           \calU_{K,V}$ by the definition of $\tau$.
\end{proof}

The following diagonalization argument is well-known, compare for example \cite{MR2446623}.

\begin{thm}\label{thm_compact_continuous_hull}
      Assume the action $\Gamma\curvearrowright X$ is proper. Take $\calP\in\Pi$ and
      assume it has FLC.
       Then the continuous hull $X_{\calP}$ is compact.
\end{thm}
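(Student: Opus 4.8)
The plan is to show that $X_{\calP}$ is sequentially compact and then invoke metrizability of the local matching topology (Corollary \ref{102557_9Aug18}) to conclude that $X_{\calP}$ is compact. I work throughout inside the subshift $\Sigma$, so that the orbit $\{\gamma\calP\mid\gamma\in\Gamma\}$, being $\Gamma$-invariant, and its closure $X_{\calP}$ are contained in $\Sigma$. First I would reduce the problem to orbit sequences: it suffices to prove that every sequence $(\gamma_n\calP)_n$ drawn from the orbit has a subsequence converging in $X_{\calP}$. Indeed, given an arbitrary sequence $(\calQ_n)_n$ in $X_{\calP}$, one chooses orbit points $\calR_n$ at distance $<1/n$ from $\calQ_n$, extracts a subsequence $(\calR_{n_j})_j$ converging to some point of $X_{\calP}$ (applying the reduced claim to the orbit sequence $(\calR_n)_n$), and observes that $(\calQ_{n_j})_j$ converges to the same limit.

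The heart of the argument is a diagonalization based on Lemma \ref{lem_for_FLC_cpt}. Fix an orbit sequence $(\gamma_n\calP)_n$. Since $\calP$ has FLC, for every compact $K\subset X$ the set $\{(\gamma_n\calP)\sci K\mid n=1,2,\dots\}$ is contained in $\{(\gamma\calP)\sci K\mid\gamma\in\Gamma\}$, hence is finite modulo $\Gamma\curvearrowright\Pi$; so Lemma \ref{lem_for_FLC_cpt} applies, and this is precisely where properness of $\Gamma\curvearrowright X$ is used. Take the countable fundamental system of entourages $\{\calU_{K_m,V_m}\mid m\geqq 1\}$ from the proof of Corollary \ref{102557_9Aug18}, with $K_m=B(x_0,R_m)$, $R_m\nearrow\infty$, and $V_m=B(e,r_m)$, $r_m\searrow 0$ (each $V_m$ is compact since $\rho_{\Gamma}$ is proper, so $V_m\in\mathscr{V}$). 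Applying Lemma \ref{lem_for_FLC_cpt} repeatedly produces nested subsequences: one all of whose pairs lie in $\calU_{K_1,V_1}$, inside it one with all pairs in $\calU_{K_2,V_2}$, and so on. The diagonal subsequence $(\calS_j)_j$ then has, for every $m$, all but finitely many of its pairs in $\calU_{K_m,V_m}$, i.e.\ it is a Cauchy sequence in the local matching uniform structure.

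To finish, I would invoke completeness. The hypotheses of Theorem \ref{thm_sigma_complete_general_ver} hold for $\Sigma$: it is supremum-closed by the standing assumption, and the local matching topology on it is Hausdorff by Proposition \ref{prop_sufficint_condition_hausdorff} since $\Sigma$ is atomistic and has limit inclusion property. Hence the local matching uniform structure on $\Sigma$ is complete, and the Cauchy sequence $(\calS_j)_j$ converges to some $\calQ\in\Sigma$. As $\calQ$ is the limit of a sequence of points of the orbit $\{\gamma\calP\mid\gamma\in\Gamma\}$, it lies in $X_{\calP}$. This establishes that every orbit sequence has a subsequence converging in $X_{\calP}$, hence $X_{\calP}$ is sequentially compact, and therefore compact.

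I expect the only genuine obstacles to be bookkeeping ones: checking that Lemma \ref{lem_for_FLC_cpt} may legitimately be iterated --- the finiteness hypothesis on $\{\calP_n\sci K\mid n\}$ is inherited by subsequences, and each subsequence still consists of elements of $\Sigma$ --- and confirming that ``all but finitely many pairs in every $\calU_{K_m,V_m}$'' is exactly the Cauchy condition with respect to the fundamental system $\{\calU_{K_m,V_m}\}$. Note that properness of the action is not used again beyond its role in Lemma \ref{lem_for_FLC_cpt}, and FLC is used only to license that lemma.
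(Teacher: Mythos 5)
Your proof is correct and follows essentially the same route as the paper: both extract, via iterated application of Lemma \ref{lem_for_FLC_cpt} and a diagonal argument, a Cauchy subsequence from an arbitrary orbit sequence, and then conclude using the completeness of $\Sigma$ from Theorem \ref{thm_sigma_complete_general_ver}. The only cosmetic difference is that the paper packages the conclusion as ``totally bounded plus complete implies compact,'' whereas you package it as ``sequentially compact plus metrizable implies compact,'' adding the (routine) reduction from general sequences in $X_{\calP}$ to orbit sequences.
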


\begin{proof}
       Since on $\Sigma$ the local matching topology is complete (Theorem \ref{thm_sigma_complete_general_ver}),
        is suffices to show that $\{\gamma\calP\mid\gamma\in\Gamma\}$ is totally bounded.
       To this aim we take a sequence $(\gamma_n\calP)$ from this set and prove there is
      a Cauchy subsequence.

       Take a sequence of compact sets $K_1,K_2\ldots\in\Cpt(X)$ and a sequence
        $V_1,V_2,\ldots\mathscr{V}$ such that $\{\calU_{K_n,V_n}\mid n=1,2,\ldots\}$ is
        a fundamental system of entourages and
         $\calU_{K_n,V_n}^2\subset\calU_{K_{n-1},V_{n-1}}$ for each $n=2,3,\ldots$.
         For example set $K_n=B(x_0,R_n)$ and $V_n=B(e,r_n)$, where $(R_n)$ is rapidly
         increasing and $(r_n)$ is rapidly decreasing.
         Then we have, for each $n,m$ with $n<m$,
          \begin{align}
	       \calU_{K_m,V_m}\calU_{K_{m-1},V_{m-1}}\cdots\calU_{K_n,V_n}\subset
	         \calU_{K_{n-1},V_{n-1}}.\label{151515_8Aug18}
	  \end{align}

          By Lemma \ref{lem_for_FLC_cpt}, we can take a subsequence $(\calP^{(1)}_n)_{n=1,2,\ldots}$ of $(\gamma_n\calP)$ such that
           $(\calP^{(1)}_n,\calP^{(1)}_m)\in\calU_{K_1,V_1}$ for any $n$ and $m$.
          We can further take a subsequence $(\calP^{(2)}_n)_n$ of $(\calP^{(1)}_n)$ such
         that $(\calP_n^{(2)},\calP_m^{(2)})\in\calU_{K_2,V_2}$ for each $n,m$.
          We proceed in this way to obtain a sequences $(\calP^{(k)}_n)_n$, $k=1,2,\ldots$.
           Set $\calQ_n=\calP^{(n)}_n$. Then by \eqref{151515_8Aug18}, the sequence
       $(\calQ_n)_n$ is
 a Cauchy subsequence of $(\gamma_n\calP)_n$.
\end{proof}

     FLC of an abstract pattern is not necessarily inherited by local derivability,
     which was defined in \cite{MR1132337} and \cite{Nagai3rd}. For example, $\mathbb{Z}$ has FLC, but
      the function $\mathbb{R}\ni t\mapsto \sin(2\pi t)\in\mathbb{R}$ does not have
      FLC, although the latter is locally derivable from the former.
     However, the fact that the continuous hull is compact is often inherited, since
    if $\calQ$ is locally derivable from $\calP$ and
      $\calQ$ lies in a set $\Sigma$ on which the
 local matching topology is complete,
 the map
     \begin{align*}
          \{\gamma\calP\mid\gamma\in\Gamma\}\ni\gamma\calP\mapsto\gamma\calQ
          \in\{\gamma\calQ\mid\gamma\in\Gamma\}
     \end{align*}
      is uniformly continuous and extended to
       $X_{\calP}\rightarrow X_{\calQ}$.

       With an additional assumption, we have the following:

       \begin{thm}
	   Take $\calP\in\Sigma$  and assume  the set
	   $A(\calP\sci K)$ is finite for each $K\in\Cpt(X)$.
	If $X_{\calP}$
	   is compact, then the abstract pattern $\calP$ has FLC.
       \end{thm}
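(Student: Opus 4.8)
The plan is to argue by contradiction: from a failure of FLC we extract a point $\calQ$ of the compact hull $X_{\calP}$, observe that it still inherits the finite-atom hypothesis in a suitably enlarged reading window, and then turn that finiteness into a bound on the number of $\Gamma$-orbits of the cuts of $\calP$. So suppose $\calP$ does not have FLC. Then there is a compact $K_0\subseteq X$ such that $\{(\gamma\calP)\sci K_0\mid\gamma\in\Gamma\}$ meets infinitely many orbits of $\Gamma\curvearrowright\Pi$; choose $\gamma_1,\gamma_2,\dots\in\Gamma$ so that the abstract patterns $(\gamma_n\calP)\sci K_0$ lie in pairwise distinct $\Gamma$-orbits. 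Since $X_{\calP}$ is compact and metrizable (Corollary \ref{102557_9Aug18}), hence sequentially compact, and $X_{\calP}\subseteq\Sigma$ (as $\Sigma$ is complete by Theorem \ref{thm_sigma_complete_general_ver}, compare the proof of Theorem \ref{thm_compact_continuous_hull}), we may pass to a subsequence and assume $\gamma_n\calP\to\calQ$ in the local matching topology for some $\calQ\in\Sigma$; the cuts $(\gamma_n\calP)\sci K_0$ are still pairwise inequivalent modulo $\Gamma$.

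Next I would transfer the hypothesis to $\calQ$. Fix $V_0\in\mathscr V$ and put $K_1=V_0^{-1}K_0$, which is compact, being the image of the compact set $V_0^{-1}\times K_0$ under the continuous action map $\Gamma\times X\to X$. Because $\gamma_n\calP\to\calQ$, for some large $n$ we have $(\calQ,\gamma_n\calP)\in\calU_{K_1,V_0}$, that is, $\calQ\sci K_1=\bigl((\zeta\gamma_n)\calP\bigr)\sci K_1=(\zeta\gamma_n)\bigl(\calP\sci(\zeta\gamma_n)^{-1}K_1\bigr)$ for some $\zeta\in V_0$. As the $\Gamma$-action preserves the order and the supports, it carries atoms to atoms and the assignment $\calR\mapsto A(\calR)$ is $\Gamma$-equivariant, so $A(\calQ\sci K_1)=(\zeta\gamma_n)A\bigl(\calP\sci(\zeta\gamma_n)^{-1}K_1\bigr)$; since $(\zeta\gamma_n)^{-1}K_1$ is compact, the hypothesis yields that $A(\calQ\sci K_1)$ is \emph{finite}.

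Finally I would use this finiteness combinatorially. For every $C\in\calC(X)$ one has $A\bigl((\calQ\sci K_1)\sci C\bigr)=\{a\in A(\calQ\sci K_1)\mid\supp a\subseteq C\}$: the inclusion $\subseteq$ follows from $(\calQ\sci K_1)\sci C\leqq\calQ\sci K_1$ (Lemma \ref{lem_order_abstract pattern_space}), monotonicity of supports, and $\supp\bigl((\calQ\sci K_1)\sci C\bigr)\subseteq C$ (Lemma \ref{lemma_support_calP_sci_C}), while $\supseteq$ is immediate from the first cutting-off axiom, since $\supp a\subseteq C$ gives $(\calQ\sci K_1)\sci C\sci\supp a=(\calQ\sci K_1)\sci\supp a=a$. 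Hence $A\bigl((\calQ\sci K_1)\sci C\bigr)$ ranges over the finitely many subsets of the finite set $A(\calQ\sci K_1)$, and as $\Sigma$ is atomistic each pattern $(\calQ\sci K_1)\sci C$ equals $\bigvee A\bigl((\calQ\sci K_1)\sci C\bigr)$, so the family $\{(\calQ\sci K_1)\sci C\mid C\in\calC(X)\}$ is finite. Now for all large $n$ we also have $(\gamma_n\calP,\calQ)\in\calU_{K_0,V_0}$, hence $(\gamma_n\calP)\sci K_0=(\eta_n\calQ)\sci K_0=\eta_n\bigl(\calQ\sci\eta_n^{-1}K_0\bigr)$ with $\eta_n\in V_0$; and $\eta_n^{-1}K_0\subseteq V_0^{-1}K_0=K_1$ gives $\calQ\sci\eta_n^{-1}K_0=(\calQ\sci K_1)\sci\eta_n^{-1}K_0$, which belongs to the finite family above. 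Therefore $\{(\gamma_n\calP)\sci K_0\mid n\text{ large}\}$ lies in $\Gamma$ applied to a finite set, so it meets only finitely many $\Gamma$-orbits, contradicting the choice of the $\gamma_n$. This proves that $\calP$ has FLC.

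The step I expect to be the main obstacle is the transfer in the second paragraph: $\calQ$ is only reachable through the approximants $\gamma_n\calP$, and only up to a small group element, so one must enlarge the reading window to $K_1=V_0^{-1}K_0$ so that the single inflated cut $\calQ\sci K_1$ dominates all the cuts $\calQ\sci\eta^{-1}K_0$ with $\eta\in V_0$ that appear at the end. A secondary point, easy to overlook, is that one needs the entourages in both directions ($(\calQ,\gamma_n\calP)$ and $(\gamma_n\calP,\calQ)$) and that $X_{\calP}\subseteq\Sigma$, so that atomisticity and supremum-closedness of $\Sigma$ genuinely apply to $\calQ$ and its cuts.
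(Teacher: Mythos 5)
Your proof is correct and follows essentially the same route as the paper's: argue by contradiction with pairwise $\Gamma$-inequivalent cuts, extract a limit point $\calQ$ by compactness, observe that the atom set of the relevant enlarged window is finite by the hypothesis and equivariance, and use atomisticity to turn finiteness of atom sets into finiteness of the cuts up to $\Gamma$-action. The only (cosmetic) difference is that you package the pigeonhole as finiteness of the family $\{(\calQ\sci K_1)\sci C\mid C\in\calC(X)\}$, whereas the paper pigeonholes directly on the translated atom sets $\eta_n A((\gamma_n\calP)\sci K)$ inside one finite atom set.
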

        \begin{proof}
	    Assume for some $K\in\Cpt(X)$ the set
	     \begin{align*}
	          \{(\gamma\calP)\sci K\mid\gamma\in\Gamma\}
	     \end{align*}
	     is infinite up to $\Gamma$-action. Then there are
	      $\gamma_1,\gamma_2,\ldots\in\Gamma$ such that if $n\neq m$, 
	       $(\gamma_n\calP)\sci K$ and $(\gamma_m\calP)\sci K$ are not equivalent
	       with respect to $\Gamma$-action. Since $X_{\calP}$ is compact, by
	 passing to a
	 subsequence if necessary, we may assume there is $\calQ\in X_{\calP}$
	  such that $\gamma_n\calP\rightarrow\calQ$ as $n\rightarrow\infty$.

	     Take a compact neighborhood $V$ of $e\in\Gamma$ and a compact $K'\subset X$
	 such that if $\xi,\zeta\in V$ and $x\in K$, then $\xi^{-1}\zeta x\in K'$.
	 By passing to a subsequence if necessary, we may assume for each $n$ there is
	   $\eta_n\in V$ such that
	    \begin{align*}
	          (\eta_n\gamma_n\calP)\sci K'=\calQ\sci K'.
	    \end{align*}

	 If $\calR\in A((\gamma_n\calP)\sci K)$, then
	   $\eta_n\calR\in A((\eta_n\gamma_n\calP)\sci K')\subset A(\eta_1\gamma_1\calP)$.
	 We have $\eta_n\calR\in A((\eta_1\gamma_1\calP)\sci K')$.
	   Since the latter is finite, there are distinct $n$ and $m$ such that
	 \begin{align*}
	    \eta_n A((\gamma_n\calP)\sci K)=\eta_m
	    A((\gamma_m\calP)\sci K),
	 \end{align*}
	 which implies
	 \begin{align*}
	      \eta_n((\gamma_n\calP)\sci K)=\eta_m((\gamma_m\calP)\sci K).
	 \end{align*}
	  This contradicts the assumption at the beginning and we see
	     $\calP$ has FLC.
	\end{proof}

	This means that for Delone sets and tilings with finitely many tile types
	up to $\Gamma$-action, FLC is equivalent to compactness of the continuous hull,
	which is well-known for the case of $X=\Rd$.

\bibliographystyle{amsplain}
\bibliography{tiling}

\providecommand{\bysame}{\leavevmode\hbox to3em{\hrulefill}\thinspace}
\providecommand{\MR}{\relax\ifhmode\unskip\space\fi MR }
\providecommand{\MRhref}[2]{%
  \href{http://www.ams.org/mathscinet-getitem?mr=#1}{#2}
}
\providecommand{\href}[2]{#2}
\begin{thebibliography}{10}

\bibitem{AP}
Jared~E. Anderson and Ian~F. Putnam, \emph{Topological invariants for
  substitution tilings and their associated {$C^*$}-algebras}, Ergodic Theory
  Dynam. Systems \textbf{18} (1998), no.~3, 509--537. \MR{1631708
  (2000a:46112)}

\bibitem{MR1132337}
Michael Baake, Martin Schlottmann, and Peter~D Jarvis, \emph{Quasiperiodic
  tilings with tenfold symmetry and equivalence with respect to local
  derivability}, Journal of Physics A: Mathematical and General \textbf{24}
  (1991), no.~19, 4637.

\bibitem{BH}
Nicolas Bedaride and Arnaud Hilion, \emph{Geometric realizations of two
  dimensional substitutive tilings, arxiv:1101.3905v4 [math.gt]}.

\bibitem{MR1145337}
Jonathan Block and Shmuel Weinberger, \emph{Aperiodic tilings, positive scalar
  curvature and amenability of spaces}, J. Amer. Math. Soc. \textbf{5} (1992),
  no.~4, 907--918. \MR{1145337}

\bibitem{MR1726779}
Nicolas Bourbaki, \emph{General topology. {C}hapters 1--4}, Elements of
  Mathematics (Berlin), Springer-Verlag, Berlin, 1998, Translated from the
  French, Reprint of the 1989 English translation. \MR{1726779 (2000h:54001a)}

\bibitem{MR1726872}
\bysame, \emph{General topology. {C}hapters 5--10}, Elements of Mathematics
  (Berlin), Springer-Verlag, Berlin, 1998, Translated from the French, Reprint
  of the 1989 English translation. \MR{1726872}

\bibitem{MR1976605}
Jeong-Yup Lee, Robert~V. Moody, and Boris Solomyak, \emph{Consequences of pure
  point diffraction spectra for multiset substitution systems}, Discrete
  Comput. Geom. \textbf{29} (2003), no.~4, 525--560. \MR{1976605 (2005g:37026)}

\bibitem{MR2851885}
Jeong-Yup Lee and Boris Solomyak, \emph{Pisot family self-affine tilings,
  discrete spectrum, and the {M}eyer property}, Discrete Contin. Dyn. Syst.
  \textbf{32} (2012), no.~3, 935--959. \MR{2851885 (2012h:37040)}

\bibitem{MR1658579}
G.~A. Margulis and S.~Mozes, \emph{Aperiodic tilings of the hyperbolic plane by
  convex polygons}, Israel J. Math. \textbf{107} (1998), 319--325. \MR{1658579}

\bibitem{MR1452434}
Shahar Mozes, \emph{Aperiodic tilings}, Invent. Math. \textbf{128} (1997),
  no.~3, 603--611. \MR{1452434}

\bibitem{Nagai3rd}
Yasushi Nagai, \emph{A general framework for tilings, delone sets, functions
  and measures, and their interrelation}, arXiv preprint arXiv:1702.03807
  (2017).

\bibitem{MR2446623}
Lorenzo Sadun, \emph{Topology of tiling spaces}, University Lecture Series,
  vol.~46, American Mathematical Society, Providence, RI, 2008. \MR{2446623
  (2009m:52041)}

\bibitem{MR1798991}
Martin Schlottmann, \emph{Generalized model sets and dynamical systems},
  Directions in mathematical quasicrystals, CRM Monogr. Ser., vol.~13, Amer.
  Math. Soc., Providence, RI, 2000, pp.~143--159. \MR{1798991 (2001k:52035)}

\bibitem{Wh}
Michael~F. Whittaker, \emph{{$C^*$}-algebras of tilings with infinite
  rotational symmetry}, J. Operator Theory \textbf{64} (2010), no.~2, 299--319.
  \MR{2718945 (2011m:46127)}

\bibitem{MR2137108}
Takeo Yokonuma, \emph{Discrete sets and associated dynamical systems in a
  non-commutative setting}, Canad. Math. Bull. \textbf{48} (2005), no.~2,
  302--316. \MR{2137108}

\end{thebibliography}

\end{document}